\documentclass[11pt]{article}
\usepackage{amssymb}
\usepackage{amsmath}
\usepackage{mathrsfs}
\usepackage{enumerate}
\usepackage{graphics}
\usepackage{graphicx}
\usepackage{dsfont}
\usepackage{subfigure}
\usepackage{latexsym,amssymb,amsmath,amsfonts,amsthm}\usepackage{txfonts}
\topmargin =0mm \headheight=0mm \headsep=0mm \textheight =220mm
\textwidth =160mm \oddsidemargin=0mm\evensidemargin =0mm
\sloppy
\brokenpenalty=10000

\newcommand{\R}{{\mathbb R}}

\newcommand{\Sp}{{\mathbb S}}
\newcommand{\ds}{\displaystyle}

\newcommand{\be}{\begin{eqnarray}}
\newcommand{\ben}{\begin{eqnarray*}}
\newcommand{\en}{\end{eqnarray}}
\newcommand{\enn}{\end{eqnarray*}}
\newcommand{\ba}{\backslash}
\newcommand{\pa}{\partial}

\newcommand{\ov}{\overline}

\newcommand{\G}{\Gamma}

\newcommand{\Om}{\Omega}

\newcommand{\la}{\lambda}

\newcommand{\wid}{\widetilde}

\newcommand{\ol}{\overline}

\newcommand{\half}{\frac{1}{2}}

\newtheorem{theorem}{Theorem}[section]
\newtheorem{lemma}[theorem]{Lemma}

\newtheorem{remark}[theorem]{Remark}

\newtheorem{algorithm}{Algorithm}[section]

\begin{document}
\renewcommand{\theequation}{\arabic{section}.\arabic{equation}}
\begin{titlepage}
%
\title{\bf Fast imaging of scattering obstacles from phaseless far-field measurements at a fixed frequency}
\author{Bo Zhang\thanks{LSEC, NCMIS and Academy of Mathematics and Systems Science, Chinese Academy of Sciences,
Beijing, 100190, China and School of Mathematical Sciences, University of Chinese Academy of Sciences,
Beijing 100049, China ({\tt b.zhang@amt.ac.cn})}
\and Haiwen Zhang\thanks{NCMIS and Academy of Mathematics and Systems Science, Chinese Academy of Sciences,
Beijing 100190, China ({\tt zhanghaiwen@amss.ac.cn})}
}
\date{}
\end{titlepage}
\maketitle

\vspace{.2in}

\begin{abstract}
This paper is concerned with the inverse obstacle scattering problem with phaseless far-field data at a fixed
frequency. The main difficulty of this problem is the so-called translation invariance property of the modulus
of the far-field pattern or the phaseless far-field pattern generated by one plane wave as the incident field,
which means that the location of the obstacle can not be recovered from such phaseless far-field data at a
fixed frequency. It was recently proved in our previous work \cite{XZZ18} that the obstacle can be uniquely
determined by the phaseless far-field patterns generated by infinitely many sets of superpositions of two
plane waves with different directions at a fixed frequency if the obstacle is a priori known to be a
sound-soft or an impedance obstacle with
real-valued impedance function. The purpose of this paper is to develop a direct imaging algorithm
to reconstruct the location and shape of the obstacle from the phaseless far-field data corresponding to
infinitely many sets of superpositions of two plane waves with a fixed frequency as the incident fields.
Our imaging algorithm only involves the calculation of the products of the measurement data with
two exponential functions at each sampling point and is thus fast and easy to implement.
Further, the proposed imaging algorithm does not need to know the type of boundary conditions on the obstacle
in advance and is capable to reconstruct multiple obstacles with different boundary conditions.
Numerical experiments are also carried out to illustrate that our imaging method is stable, accurate
and robust to noise.

\vspace{.2in}
{\bf Keywords:} Inverse scattering, Helmholtz
equation, phaseless far-field data, direct imaging method.
\end{abstract}

\section{Introduction}\label{sec1}

Problems of scattering of time-harmonic acoustic waves arise in many applications, such as radar and sonar,
remote sensing, geophysics, medical imaging and nondestructive testing. The direct scattering problem is
to determine the scattering solution, given the obstacle and its physical property,
while the inverse scattering problem is to determine the obstacle and/or its physical property
from the measurement information of the scattering solution.
Due to wide applications of direct and inverse scattering problems, such problems have been extensively
studied; see \cite{CK13} for the mathematical and numerical aspects of inverse scattering problems.

In this paper, we are concerned with the inverse problem of recovering scattering obstacles from phaseless
far-field data. For simplicity, we restrict our attention to the two-dimensional case.
Assume that $D\subset\R^2$ is a bounded domain with $C^2-$smooth boundary $\G:=\pa{D}$.
Denote by $\nu$ the unit outward normal on $\G$ to the domain $D$ and by $\mathbb{S}^1$ the unit circle.
Suppose a time-harmonic ($e^{-i\omega t}$ time dependence) plane wave
$$
u^i=u^i(x,d):=\exp(ikd\cdot x)
$$
is incident on the bounded obstacle $D$ from the unbounded part $\R^2\ba\ov{{D}}$,
where, $k=\omega/c>0$ is the wave number and $\omega$ and $c$ are the wave frequency and speed, respectively,
in $\R^2\ba\ov{{D}}$. Then the total field $u=u^i+u^s$, which is the sum of the incident field $u^i$
and the scattered field $u^s$, satisfies the Helmholtz equation in $\R^2\ba\ov{{D}}$:
\be\label{eq1}
\Delta u+k^2 u=0\quad\mbox{in}\;\;\R^2\ba\ov{D}.
\en

Moreover, a boundary condition is required, which depends on the physical property of the obstacle ${D}$.
When $D$ is an impenetrable, sound-soft, $u$ satisfies the Dirichlet boundary condition on $\G$:
\be\label{eq2}
u=0\quad\mbox{on}\;\;\G.
\en
When $D$ is an impenetrable, impedance obstacle, $u$ satisfies the impedance boundary condition on $\G$:
\be\label{eq3}
\frac{\pa u}{\pa\nu}+i k \rho u=0\quad \mbox{on}\;\;\G,
\en
where $\rho\in L^\infty(\G)$ is the impedance function on the boundary $\G$.
If $\rho=0$, the impedance boundary condition is reduced to the Neumann boundary condition
which means that $D$ is a sound-hard obstacle.

When $D$ is a penetrable obstacle, $u$ satisfies the reduced wave equation in $D$:
\be\label{eq4}
\Delta u+k^2nu=0\quad\mbox{in}\;\;{D}
\en
and the transmission boundary condition on $\G$:
\be\label{eq5}
u_+ -u_-=0,\quad\frac{\pa u_+}{\pa\nu}-\lambda\frac{\pa u_-}{\pa\nu}=0\quad\textrm{on}\;\;\G,
\en
where $n\in L^\infty({D})$ is the refractive index in $D$ which is a non-negative function and
characterizes the inhomogeneous material in $D$, $\la$ is a positive transmission constant depending
on the property of the medium in $\R^2\ba\ov{{D}}$ and ${D}$, and ``+/-" denotes the limits from
the exterior and interior of the boundary, respectively.

Further, the scattered field $u^s$ is required to satisfy the Sommerfeld radiation condition
\be\label{eq6}
\lim_{r\to\infty}r^\half\left(\frac{\pa u^s}{\pa r}-ik u^s\right)=0,\quad r=|x|
\en
which guarantees that the scattered field is outgoing.

By the variational method \cite{CC06} or the integral equation method \cite{CK83,CK13} it can be
shown that the Dirichlet scattering problem (\ref{eq1})-(\ref{eq2}) and (\ref{eq6}),
the impedance scattering problem (\ref{eq1}), (\ref{eq3}) and (\ref{eq6}), and
the transmission scattering problem (\ref{eq1}) and (\ref{eq4})-(\ref{eq6}) have a unique solution.
Further, it is known that the scattered field $u^s$ has the asymptotic behavior
\be\label{eq7}
u^s(x)=\frac{e^{ik|x|}}{\sqrt{|x|}}\left[u^\infty(\hat{x};d)
+O\left(\frac{1}{\sqrt{|x|}}\right)\right],\quad |x|\to\infty,
\en
uniformly for all observation directions $\hat{x}=x/|x|$ on $\mathbb{S}^1$,
where $u^\infty$ is called the far-field pattern of the scattered field $u^s$.

The inverse scattering problem is to determine the shape and location of the obstacle $D$ and its
physical property from the near-field (the scattered field $u^s$ or the total field $u$)
or the far field pattern $u^\infty$ and has been extensively studied mathematically and numerically
(see, e.g., the monographs \cite{CC06,CK13,KG08} and the references therein).
In many practical applications, however, the phase of the near-field or the far-field pattern can not
be measured accurately compared with its modulus or intensity and sometimes is even impossible to be
measured, and therefore it is often desirable to reconstruct the scattering obstacle from the modulus
or intensity of the near-field or the far-field pattern (or the phaseless near-field data or
the phaseless far-field data).

Inverse scattering problems with phaseless near-field data are also called the (near-field) phase
retrieval problem in optics and other physical and engineering sciences and have been widely studied
numerically over the past decades
(see, e.g. \cite{BaoLiLv2012,BaoZhang16,Candes15,TGRS03,CFH17,CH17a,CH17b,Li09,MDS92,MD93,Pan11,MOTL97}
and the references quoted there).
Recently, uniqueness and stability results have also been established in \cite{K14,K17,KR16,MH17,N16,N15}
for inverse medium scattering problems with phaseless near-field data.
However, not many results are available for inverse scattering problems with phaseless far-field data
both mathematically and numerically.
This is mainly because the modulus of the far-field pattern corresponding to one incident plane wave
is invariant under translations of the obstacle \cite{KR97,LS04}.
This means that the location of the obstacle can not be determined from the phaseless far-field data
if only one plane wave is used as the incident field.
Therefore, only the shape reconstruction of the obstacle was considered in the literature
for the case of phaseless far-field measurements corresponding to one plane wave as the incident field.
For example, reconstruction methods have been proposed to reconstruct the shape of the obstacle
or the real-valued surface impedance of the obstacle (assuming that the obstacle is known)
from the phaseless far-field data with one plane wave as the incident field
(see \cite{ACZ16,Ivanyshyn07,IvanyshynKress2010,IvanyshynKress2011,KR97,LiLiu15,S16}).
For plane wave incidence no uniqueness results are available for the inverse problem of
recovering scattering obstacles from phaseless far-field data generated by one incident plane wave.
By assuming a priori the obstacle to be a sound-soft ball centered at the origin, uniqueness was established
in determining the radius of the ball from a single phaseless far-field datum in \cite{LZ10}.
In \cite{majda76} it was proved, by studying the high frequency asymptotics of the far-field pattern,
that the shape of a general smooth convex sound-soft obstacle can be recovered from
the modulus of the far-field pattern associated with one plane wave as the incident field.

Recently, it was proved in \cite{ZZ17a} that the translational invariance property of the phaseless
far-field pattern can be broken by using superpositions of two plane waves rather than one plane wave
as the incident fields with an interval of frequencies. A recursive Newton-type iteration
algorithm in frequencies was also given in \cite{ZZ17a} to recover both the location and the shape of
the obstacle simultaneously from multi-frequency phaseless far-field data.
This idea was further extended to inverse scattering by locally rough surfaces with phaseless far-field
data in \cite{ZZ17b}. In \cite{XZZ18} it was rigorously proved for the first time that
the obstacle and the index of refraction of an inhomogeneous medium can be uniquely determined by
the phaseless far-field patterns generated by infinitely many sets
of superpositions of two plane waves with different directions at a fixed frequency
if the obstacle is a priori known to be a sound-soft obstacle or an impedance obstacle with a
real-valued impedance function and the refractive index $n$ of the inhomogeneous medium is real-valued
and satisfies the condition that either $n-1\ge c_1$ or $n-1\le-c_1$ in the support of $n-1$
for some positive constant $c_1$.
This paper develops a fast imaging algorithm to numerically recover the scattering obstacles
by phaseless (or intensity-only) far-field data at a fixed frequency.
A main feature of our imaging algorithm is its capability of depicting the surface of the obstacle
only through computing the products of the measured data and two exponential functions at each
sampling point, leading to both very fast computation speed and very low computational cost.
Moreover, our imaging algorithm does not require a prior knowledge of the physical property of
the obstacle, that is, the type of boundary conditions on the boundary of the obstacle does not
need to know in advance, so it works for both penetrable and impenetrable obstacles.
Numerical experiments show that our imaging algorithm can give a good and reliable
reconstruction of the obstacle, even for the case with a fairly high level of noise in the data,
which is comparable to that obtained by the direct imaging algorithm with full data.
It should be remarked that direct imaging (or sampling) methods have recently attracted more and more
attention due to their low computational cost and fast computation speed,
such as \cite{CCH13,CH15a,CH15b,IJZ12,LZZ18} for near-field data, \cite{G11,LLSS13,LLZ14,LZ13,L17,P10}
for far-field data and \cite{CFH17,CH17a,CH17b} for phaseless near-field data.

This paper is organized as follows.
Section \ref{sec2} is devoted to the direct imaging method for the inverse scattering problem
with phaseless far-field data. A performance analysis of the imaging function is also given
in Section \ref{sec2}. Numerical experiments are carried out in Section \ref{sec3} to illustrate
the effectiveness of the proposed direct imaging method. Some concluding remarks are presented
in Section \ref{sec4}.

\section{The direct imaging method for the inverse problem}\label{sec2}
\setcounter{equation}{0}

In this section, let the wave number $k$ be arbitrarily fixed. Following \cite{XZZ18,ZZ17a,ZZ17b},
we make use of the following superposition of two plane waves as the incident field:
\ben
u^i=u^i_{z_0}(x;d_1,d_2)=u^i_{z_0}(x,d_1)+u^i_{z_0}(x,d_2):=e^{ik(x-z_0)\cdot d_1}+e^{ik(x-z_0)\cdot d_2},
\enn
where $d_1,d_2\in\Sp^2$ are the incident directions and $z_0\in\R^2$.
Then the scattered field $u^s$ has the asymptotic behavior
\ben
u^s(x;d_1,d_2):=u^s_{z_0}(x;d_1,d_2)=\frac{e^{ik|x|}}{\sqrt{|x|}}\left\{u^\infty_{z_0}(\hat{x};d_1,d_2)
+O\left(\frac{1}{|x|}\right)\right\},\;\;|x|\to\infty,
\enn
uniformly for all observation directions $\hat{x}\in\Sp^2$. From the linear superposition principle
it follows that \[u^s_{z_0}(x;d_1,d_2)=u^s_{z_0}(x,d_1)+u^s_{z_0}(x,d_2)\] and
\be\label{lsp}
u^\infty_{z_0}(\hat{x};d_1,d_2)=u^\infty_{z_0}(\hat{x},d_1)+u^\infty_{z_0}(\hat{x},d_2),
\en
where $u^s_{z_0}(x,d_j)$ and $u^\infty_{z_0}(\hat{x},d_j)$ are the scattered field and its far-field
pattern corresponding to the incident plane wave $u^i_{z_0}(x,d_j)$, $j=1,2.$
Our inverse problem is to reconstruct the location and shape of the obstacle $D$ from the phaseless
far-field pattern $|u^\infty_{z_0}(\hat{x};d_1,d_2)|$ for all $\hat{x},d_1,d_2\in\Sp^1$ and
a fixed point $z_0\in\R^2$.

We will propose a direct imaging method to reconstruct the scattering obstacle $D$ from the phaseless
far-field data at a fixed frequency, that is, to solve our inverse problem numerically.
To this end, for the fixed wave number $k$ and for $z_0\in\R^2$ introduce the following imaging
function for continuous data:
\be\label{eq14}
I_{z_0}(z):=\left|\int_{\Sp^1}\int_{\Sp^1}\int_{\Sp^1}\left|u^\infty_{z_0}(\hat{x};d_1,d_2)\right|^2
e^{ik(z-z_0)\cdot d_1}e^{-ik(z-z_0)\cdot d_2}ds(d_1)ds(d_2)ds(\hat{x})\right|,\;\;z\in\R^2.
\en

We will study the behavior of $I_{z_0}(z)$ when $z$ approaches the obstacle and moves away from the obstacle.
To do this, we need the Funk-Hecke formula (see, e.g. \cite[equation (24)]{P10} or \cite[pp. 33]{CK13}).

\begin{lemma}\label{lem4}
For any $k>0$ we have
\be\label{eq15}
\int_{\mathbb{S}^1} e^{ik x\cdot d} ds(d)=2\pi J_0(k|x|),\quad x\in\mathbb{R}^2,
\en
where $J_0$ is the Bessel function of order $0$.
\end{lemma}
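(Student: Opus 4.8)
The plan is to reduce the integral over $\mathbb{S}^1$ to a one-dimensional integral and recognize it as an integral representation of $J_0$. First I would dispose of the trivial case $x=0$, where both sides equal $2\pi$. For $x\neq0$, write $x = |x|\,\hat x$ with $\hat x\in\mathbb{S}^1$, and parametrize $d\in\mathbb{S}^1$ by the angle $\theta$ it makes with $\hat x$, so that $x\cdot d = |x|\cos\theta$ and $ds(d)=d\theta$ with $\theta$ ranging over $[0,2\pi)$ (or equivalently $[-\pi,\pi)$). The left-hand side then becomes
\[
\int_{\mathbb{S}^1} e^{ikx\cdot d}\,ds(d) = \int_{-\pi}^{\pi} e^{ik|x|\cos\theta}\,d\theta.
\]
Here I would note that the integral is independent of the choice of reference direction $\hat x$ by rotational invariance of the arclength measure on $\mathbb{S}^1$, which is what legitimizes aligning the polar axis with $\hat x$.

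Next I would invoke the classical integral representation of the Bessel function of order zero,
\[
J_0(t) = \frac{1}{2\pi}\int_{-\pi}^{\pi} e^{it\cos\theta}\,d\theta = \frac{1}{\pi}\int_0^{\pi} \cos(t\sin\theta)\,d\theta,
\]
valid for all real $t$ (see, e.g., \cite{CK13}). Applying this with $t = k|x|$ immediately yields
\[
\int_{-\pi}^{\pi} e^{ik|x|\cos\theta}\,d\theta = 2\pi J_0(k|x|),
\]
which is (2.15). If one prefers a self-contained derivation rather than quoting the representation, one can expand $e^{ik|x|\cos\theta}$ in its Taylor series, integrate term by term (justified by uniform convergence on the compact interval), use $\int_{-\pi}^{\pi}\cos^n\theta\,d\theta = 0$ for odd $n$ and $\int_{-\pi}^{\pi}\cos^{2m}\theta\,d\theta = 2\pi\binom{2m}{m}/4^m$, and match the resulting power series in $k|x|$ against the defining series $J_0(t)=\sum_{m\ge0}(-1)^m (t/2)^{2m}/(m!)^2$.

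The main obstacle here is essentially bookkeeping rather than any genuine difficulty: one must be careful about the normalization of the arclength measure $ds$ on $\mathbb{S}^1$ (total mass $2\pi$), the choice of angular interval, and—if the series approach is used—the identity relating the even moments of $\cos\theta$ to the central binomial coefficients. Since the statement is a standard special case of the Funk--Hecke formula and is explicitly cited from \cite[p.~33]{CK13} and \cite[eq.~(24)]{P10}, the cleanest route is simply to record the angular reduction and cite the classical Bessel integral representation, so the whole argument is a few lines.
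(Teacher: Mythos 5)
Your proof is correct: the angular reduction by rotational invariance followed by the classical integral representation $J_0(t)=\frac{1}{2\pi}\int_{-\pi}^{\pi}e^{it\cos\theta}\,d\theta$ (or the term-by-term series check) gives exactly the identity (\ref{eq15}). The paper offers no proof of its own, merely citing \cite[equation (24)]{P10} and \cite[pp.~33]{CK13}, so your argument is just a fleshed-out version of the same standard route and there is nothing to criticize.
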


Using Lemma \ref{lem4} we can establish the following theorem concerning the imaging function $I_{z_0}$.

\begin{theorem}\label{thm2}
For $z_0\in\R^2$ the indicator function $I_{z_0}(z)$ has the following form:
\be\label{eq9}
I_{z_0}(z)=\left|I^{(1)}_{z_0}(z)+I^{(2)}_{z_0}(z)+I^{(3)}_{z_0}(z)\right|,\quad z\in\mathbb{R}^2
\en
with
\ben\label{eq25}
I^{(1)}_{z_0}(z)&=&\int_{\Sp^1}\left|v^\infty_{z_0}(\hat{x};z)\right|^2ds(\hat{x}),\\ \label{eq26}
I^{(2)}_{z_0}(z)&=&\int_{\S^1}\left|w^\infty_{z_0}(\hat{x};z)\right|^2ds(\hat{x}),\\ \label{eq12}
I^{(3)}_{z_0}(z)&=&2\pi J_0(k|z-z_0|)\int_{\Sp^1}\int_{\Sp^1}\left|u^\infty_{z_0}(\hat{x};d)\right|^2
\left(e^{ik(z-z_0)\cdot d}+e^{-ik(z-z_0)\cdot d}\right)ds(d)ds(\hat{x}),
\enn
where
\ben
v^\infty_{z_0}(\hat{x};z)&=&\int_{\Sp^1}u^\infty_{z_0}(\hat{x};d)e^{-ik(z-z_0)\cdot d}ds(d),\\
w^\infty_{z_0}(\hat{x};z)&=&\int_{\Sp^1}u^\infty_{z_0}(\hat{x};d)e^{ik(z-z_0)\cdot d}ds(d),
\enn
are the far-field patterns of
the solutions to the Dirichlet scattering problem (\ref{eq1})-(\ref{eq2}) and (\ref{eq6}) for the case
when $D$ is a sound-soft obstacle, to the impedance scattering problem (\ref{eq1}), (\ref{eq3}) and
(\ref{eq6}) for the case when $D$ is an impedance obstacle or to the transmission scattering problem
(\ref{eq1}) and (\ref{eq4})-(\ref{eq6}) for the case when $D$ is a penetrable obstacle,
corresponding to the incident fields $u^i(x)=2\pi J_0(k|x-z|)$
and $u^i(x)=2\pi J_0(k|x+z-2z_0|)$, respectively.
\end{theorem}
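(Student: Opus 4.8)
The plan is to expand the square $|u^\infty_{z_0}(\hat{x};d_1,d_2)|^2$ using the linear superposition principle \eqref{lsp}, substitute it into the triple integral defining $I_{z_0}(z)$ in \eqref{eq14}, and then recognize each resulting piece. First I would write
\ben
\left|u^\infty_{z_0}(\hat{x};d_1,d_2)\right|^2
&=&u^\infty_{z_0}(\hat{x},d_1)\overline{u^\infty_{z_0}(\hat{x},d_1)}
+u^\infty_{z_0}(\hat{x},d_2)\overline{u^\infty_{z_0}(\hat{x},d_2)}\\
&&+\;u^\infty_{z_0}(\hat{x},d_1)\overline{u^\infty_{z_0}(\hat{x},d_2)}
+u^\infty_{z_0}(\hat{x},d_2)\overline{u^\infty_{z_0}(\hat{x},d_1)},
\enn
so that after inserting the two exponentials $e^{ik(z-z_0)\cdot d_1}e^{-ik(z-z_0)\cdot d_2}$ and integrating, the integral splits into four terms according to which cross-products appear.

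The key step is to integrate out the variables that do not interact with the far field. For the two diagonal terms, one of the two incident directions (say $d_2$ for the $|u^\infty_{z_0}(\hat{x},d_1)|^2$ term) appears only through the exponential $e^{-ik(z-z_0)\cdot d_2}$, so the inner integral $\int_{\Sp^1}e^{-ik(z-z_0)\cdot d_2}\,ds(d_2)$ is evaluated by the Funk–Hecke formula (Lemma \ref{lem4}) to give $2\pi J_0(k|z-z_0|)$; what remains is $2\pi J_0(k|z-z_0|)\int_{\Sp^1}\int_{\Sp^1}|u^\infty_{z_0}(\hat{x},d_1)|^2 e^{ik(z-z_0)\cdot d_1}\,ds(d_1)ds(\hat{x})$, and the $|u^\infty_{z_0}(\hat{x},d_2)|^2$ term similarly produces the companion piece with $e^{-ik(z-z_0)\cdot d}$. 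Adding these two gives exactly $I^{(3)}_{z_0}(z)$. For the two off-diagonal (cross) terms, no further Funk–Hecke reduction is needed: the $d_1$ and $d_2$ integrals decouple, and in the term $u^\infty_{z_0}(\hat{x},d_1)\overline{u^\infty_{z_0}(\hat{x},d_2)}e^{ik(z-z_0)\cdot d_1}e^{-ik(z-z_0)\cdot d_2}$ the $d_1$ integral is $\int_{\Sp^1}u^\infty_{z_0}(\hat{x},d_1)e^{ik(z-z_0)\cdot d_1}\,ds(d_1)=w^\infty_{z_0}(\hat{x};z)$ while the $d_2$ integral is $\overline{\int_{\Sp^1}u^\infty_{z_0}(\hat{x},d_2)e^{-ik(z-z_0)\cdot d_2}\,ds(d_2)}=\overline{v^\infty_{z_0}(\hat{x};z)}$, giving $\int_{\Sp^1}w^\infty_{z_0}(\hat{x};z)\overline{v^\infty_{z_0}(\hat{x};z)}\,ds(\hat{x})$; the other cross term is its complex conjugate, so their sum is $2\,\Rt\int_{\Sp^1}w^\infty_{z_0}(\hat{x};z)\overline{v^\infty_{z_0}(\hat{x};z)}\,ds(\hat{x})$. (A little care shows this is consistent with $|v^\infty_{z_0}|^2+|w^\infty_{z_0}|^2$ appearing in $I^{(1)}_{z_0}+I^{(2)}_{z_0}$ only after one combines with the diagonal contributions; more directly, grouping the four terms differently—pairing each diagonal term with half of the cross terms—yields $|v^\infty_{z_0}(\hat{x};z)+0|^2$-type completions, and the bookkeeping must be done so that the sum of all four pieces is precisely $I^{(1)}_{z_0}+I^{(2)}_{z_0}+I^{(3)}_{z_0}$.)

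Finally I would identify $v^\infty_{z_0}(\hat{x};z)$ and $w^\infty_{z_0}(\hat{x};z)$ as far-field patterns of genuine scattering solutions. Because $u^\infty_{z_0}(\hat{x},d)$ is, for each fixed $\hat{x}$, the far-field pattern corresponding to the incident plane wave $e^{ik(x-z_0)\cdot d}$, and because the scattering map (incident field $\mapsto$ far-field pattern) is linear and bounded, superposing over $d$ against the weight $e^{\mp ik(z-z_0)\cdot d}$ produces the far-field pattern associated with the incident field $\int_{\Sp^1}e^{ik(x-z_0)\cdot d}e^{\mp ik(z-z_0)\cdot d}\,ds(d)$. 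Applying the Funk–Hecke formula (Lemma \ref{lem4}) once more to these integrals gives $2\pi J_0(k|x-z|)$ and $2\pi J_0(k|x+z-2z_0|)$ respectively, which is the claimed identification, valid simultaneously in the sound-soft, impedance, and transmission cases since only linearity and well-posedness of the forward problem are used. The main obstacle is not any single estimate but the careful combinatorial bookkeeping in the second step: one must verify that the cross terms recombine with the diagonal terms exactly into the squares $|v^\infty_{z_0}|^2$ and $|w^\infty_{z_0}|^2$ plus the leftover Bessel term $I^{(3)}_{z_0}$, and that interchanging the order of integration (to pull the $d_1,d_2$ integrals inside the $\hat{x}$ integral and inside the far-field-pattern definitions) is justified—this follows from continuity of $u^\infty_{z_0}(\hat{x};d)$ in all variables and compactness of $\Sp^1$, so Fubini applies without difficulty.
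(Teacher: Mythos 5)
Your overall strategy is the same as the paper's (expand $|u^\infty_{z_0}(\hat{x};d_1,d_2)|^2$ by the superposition principle, apply Fubini, use the Funk--Hecke formula of Lemma \ref{lem4}, and identify $v^\infty_{z_0}$, $w^\infty_{z_0}$ as far-field patterns of the incident fields $2\pi J_0(k|x-z|)$ and $2\pi J_0(k|x+z-2z_0|)$), and your treatment of the diagonal terms giving $I^{(3)}_{z_0}$ and of the final identification step is correct. However, there is a genuine error in your handling of the cross terms, and the vague parenthetical about ``combinatorial bookkeeping'' does not repair it. In the term $u^\infty_{z_0}(\hat{x},d_1)\overline{u^\infty_{z_0}(\hat{x},d_2)}\,e^{ik(z-z_0)\cdot d_1}e^{-ik(z-z_0)\cdot d_2}$, the $d_2$-integral is $\int_{\Sp^1}\overline{u^\infty_{z_0}(\hat{x},d_2)}\,e^{-ik(z-z_0)\cdot d_2}\,ds(d_2)$; pulling the conjugate outside flips the sign in the exponent, so this equals $\overline{\int_{\Sp^1}u^\infty_{z_0}(\hat{x},d_2)\,e^{+ik(z-z_0)\cdot d_2}\,ds(d_2)}=\overline{w^\infty_{z_0}(\hat{x};z)}$, \emph{not} $\overline{v^\infty_{z_0}(\hat{x};z)}$ as you wrote. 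Consequently this cross term contributes $\int_{\Sp^1}|w^\infty_{z_0}(\hat{x};z)|^2\,ds(\hat{x})=I^{(2)}_{z_0}(z)$, and the conjugate cross term contributes $\int_{\Sp^1}|v^\infty_{z_0}(\hat{x};z)|^2\,ds(\hat{x})=I^{(1)}_{z_0}(z)$, exactly as in the paper; no regrouping of cross terms with diagonal terms is needed or possible.

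As your argument stands, it would yield $I_{z_0}(z)=\bigl|I^{(3)}_{z_0}(z)+2\,\Rt\int_{\Sp^1}w^\infty_{z_0}(\hat{x};z)\overline{v^\infty_{z_0}(\hat{x};z)}\,ds(\hat{x})\bigr|$, which is not the claimed identity, since $2\,\Rt\int_{\Sp^1}w\,\overline{v}\,ds(\hat{x})\le\int_{\Sp^1}\bigl(|v|^2+|w|^2\bigr)ds(\hat{x})$ with equality only when $v=w$; your suggested fix of ``pairing each diagonal term with half of the cross terms'' cannot close this gap because the diagonal terms are already fully consumed in producing $I^{(3)}_{z_0}$ via the Funk--Hecke evaluation. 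Once the conjugation is corrected, the four terms match the three pieces $I^{(1)}_{z_0}$, $I^{(2)}_{z_0}$, $I^{(3)}_{z_0}$ term by term and the proof is complete, coinciding with the paper's argument; your justification of Fubini by continuity of $u^\infty_{z_0}(\hat{x};d)$ and compactness of $\Sp^1$ is fine.
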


\begin{proof}
For simplicity, we only consider the impedance obstacle case. For other cases, the proofs are similar.

For $d_1,d_2\in\Sp^1$ we have $u^i_{z_0}(x;d_1,d_2)=u^i_{z_0}(x;d_1)+u^i_{z_0}(x;d_2)$ for $x\in\R^2$,
so $u^\infty_{z_0}(\hat{x};d_1,d_2)=u^\infty_{z_0}(\hat{x};d_1)+u^\infty_{z_0}(\hat{x};d_2)$
for $\hat{x}\in\Sp^1$. Inserting this into (\ref{eq14}), we obtain that
\ben
I_{z_0}(z)&=&\Bigg|\int_{\Sp^1}\int_{\Sp^1}\int_{\Sp^1}\left(|u^\infty_{z_0}(\hat{x};d_1)|^2
+u^\infty_{z_0}(\hat{x};d_1)\ol{u^\infty_{z_0}(\hat{x};d_2)}
+\ol{u^\infty_{z_0}(\hat{x};d_1)}u^\infty_{z_0}(\hat{x};d_2)\right.\\
&&\left.+|u^\infty_{z_0}(\hat{x};d_2)|^2\right)
e^{ik(z-z_0)\cdot d_1}e^{-ik(z-z_0)\cdot d_2}ds(d_1)ds(d_2)ds(\hat{x})\Bigg|
\enn
Interchanging the order of integration and using Lemma \ref{lem4}, we have
\ben
I_{z_0}(z)&=&\Bigg|\int_{\Sp^1}\bigg|\int_{\Sp^1}u^\infty_{z_0}(\hat{x};d)e^{-ik(z-z_0)
  \cdot d}ds(d)\bigg|^2ds(\hat{x})+\int_{\Sp^1}\bigg|\int_{\Sp^1}
  u^\infty_{z_0}(\hat{x};d)e^{ik(z-z_0)\cdot d}ds(d)\bigg|^2ds(\hat{x})\\
&&+\int_{\Sp^1}e^{-ik(z-z_0)\cdot d}ds(d)\cdot\int_{\Sp^1}\int_{\Sp^1}
  \left|u^\infty_{z_0}(\hat{x};d)\right|^2e^{ik (z-z_0)\cdot d}ds(d)ds(\hat{x})\\
&&+\int_{\Sp^1}e^{ik(z-z_0)\cdot d}ds(d)\cdot\int_{\Sp^1}\int_{\Sp^1}
  \left|u^\infty_{z_0}(\hat{x};d)\right|^2e^{-ik (z-z_0)\cdot d}ds(d)ds(\hat{x})\Bigg|\\
&=&\Bigg|\int_{\Sp^1}\left|v^\infty_{z_0}(\hat{x};z)\right|^2ds(\hat{x})
  +\int_{\Sp^1}\left|w^\infty_{z_0}(\hat{x};z)\right|^2ds(\hat{x})\\
&&+2\pi J_0(k|z-z_0|)\int_{\Sp^1}\int_{\Sp^1}\left|u^\infty_{z_0}(\hat{x};d)\right|^2
  \left(e^{ik (z-z_0)\cdot d}+e^{-ik(z-z_0)\cdot d}\right)ds(d)ds(\hat{x})\Bigg|
\enn
Note that $u^\infty_{z_0}(\hat{x};d)$ is the far-field pattern of the scattering solution to the impedance
scattering problem (\ref{eq1}), (\ref{eq3}) and (\ref{eq6}) with the incident field
$u^i(x)=u^i_{z_0}(x;d)=e^{ikd\cdot (x-z_0)}$. Therefore, by Lemma \ref{lem4} again it follows that
$v^\infty_{z_0}(\hat{x};z)$ and $w^\infty_{z_0}(\hat{x};z)$ are the far-field patterns of the scattering
solutions to the impedance scattering problem (\ref{eq1}), (\ref{eq3}) and (\ref{eq6})
with the incident field $u^i(x)=\int_{\Sp^1}u^i_{z_0}(x;d)e^{-ik(z-z_0)\cdot d}ds(d)=2\pi J_0(k|x-z|)$
and $u^i(x)=\int_{\Sp^1}u^i_{z_0}(x;d)e^{ik(z-z_0)\cdot d}ds(d)=2\pi J_0(k|x+z-2z_0|)$, respectively.
The proof is thus complete.
\end{proof}

We now study the behavior of the imaging function $I_{z_0}(z)$. To this end, we introduce some notations.
Let $J_n$ be the Bessel function of order $n$ for any non-negative integer $n$. Then we have \cite{CK13}
\be\label{eq13}
J_n(t)&=&\sum_{p=0}^\infty\frac{(-1)^p}{p!(n+p)!}\left(\frac{t}{2}\right)^{n+2p},\quad t\in\R,\\ \label{eq21}
J_n(t)&=&\sqrt{\frac{2}{\pi t}}\cos\left(t-\frac{n\pi}{2}-\frac{\pi}{4}\right)
\left\{1+O\left(\frac{1}{t}\right)\right\},\quad t\rightarrow\infty.
\en
In particular, $J_0(t)$ takes its maximum at $t_0=0$ with $J_0(t_0)=1$ and $J_1(t)$ takes its maximum
at $t_1\approx 1.84$ with $J_1(t_1)\thickapprox0.581$. Further, we have
\be\label{eq22}
J'_0(t)=-J_1(t)
\en
The behavior of the Bessel functions $J_0$ and $J_1$ is presented in Figure \ref{fig10}.
\begin{figure}[htbp]
  \centering
  \subfigure{\includegraphics[width=3in]{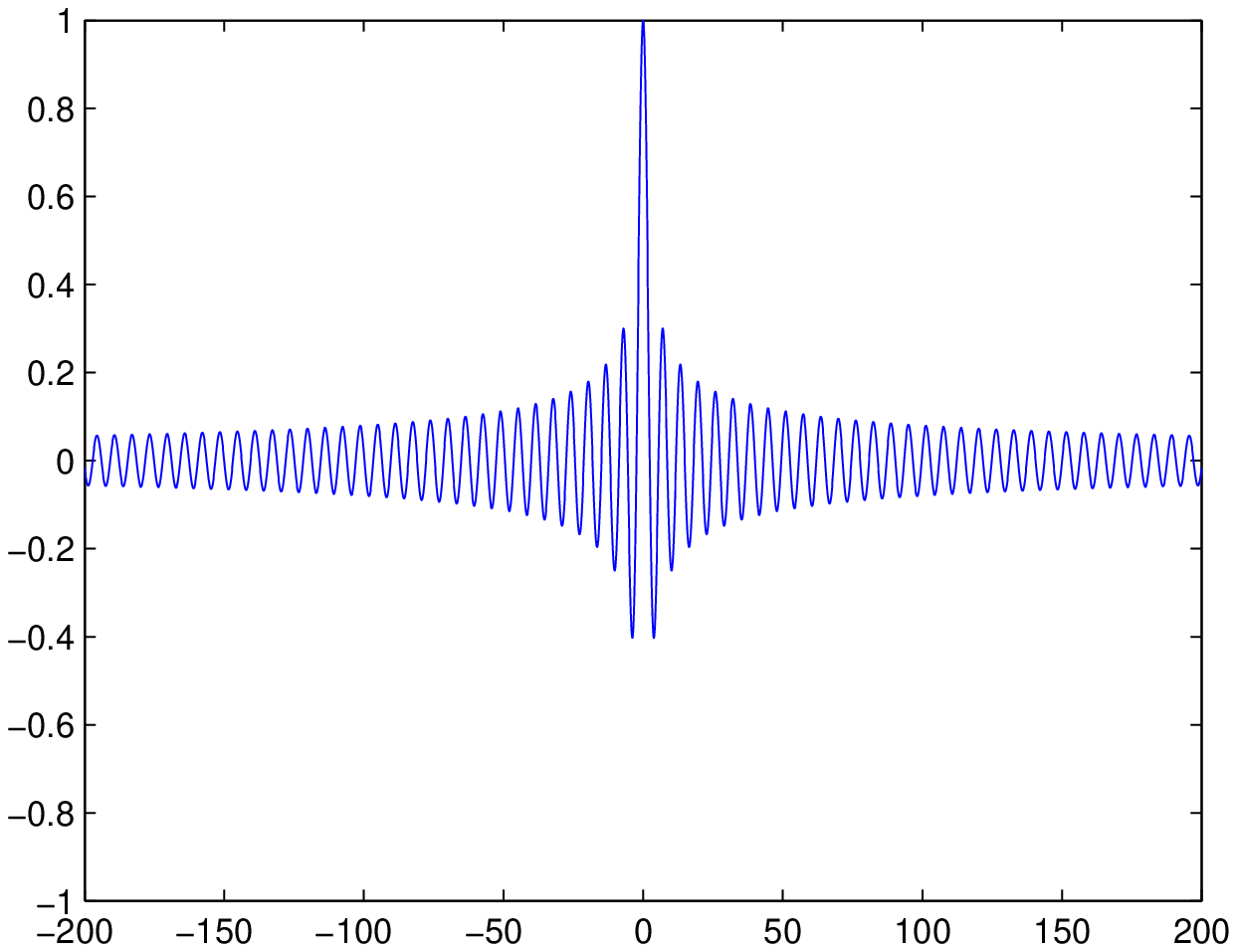}}
  \subfigure{\includegraphics[width=3in]{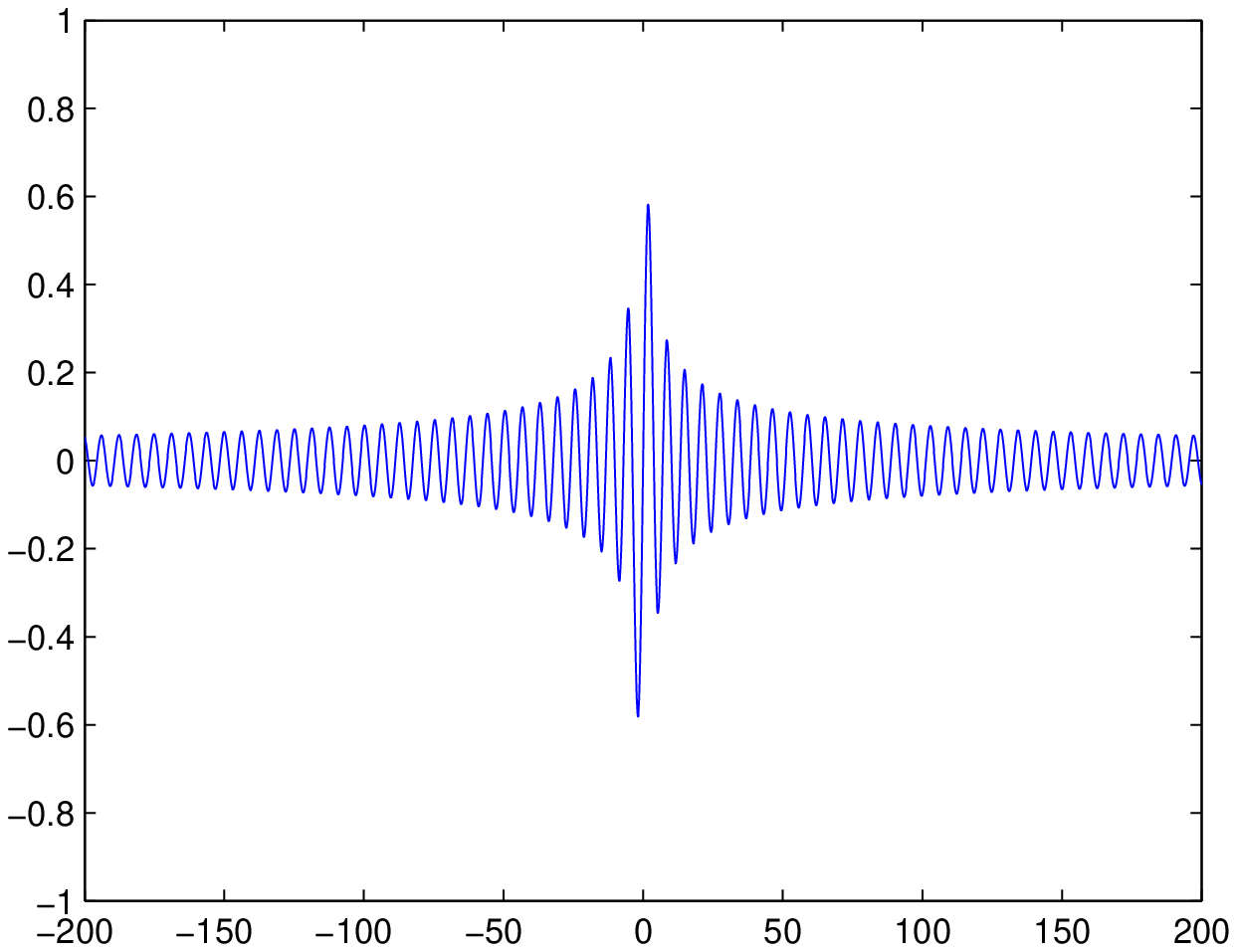}}
\caption{The behavior of the Bessel functions $J_0$ (left) and $J_1$ (right).
}\label{fig10}
\end{figure}

For $k>0$ denote by $\Phi_k(x,y)$ the fundamental solution of the Helmholtz equation $\Delta u+k^2u=0$
in $\R^2$ which is given by
\ben
\Phi_k(x,y):=\frac{i}{4}H^{(1)}_0(k|x-y|),\;\; x,y\in\R^2,x\neq y,
\enn
where $H^{(1)}_0$ is the Hankel function of the first kind of order $0$.
Define the single- and double-layer potentials
\ben
&&(\mathcal{S}_k\varphi)(x):=\int_\Gamma\Phi_k(x,y)\varphi(y)ds(y),\quad x\in\R^2\ba\Gamma,\\
&&(\mathcal{D}_k\varphi)(x):=\int_\Gamma\frac{\pa\Phi_k(x,y)}{\pa\nu(y)}\varphi(y)ds(y),
\quad x\in\R^2\ba\Gamma
\enn
and the boundary integral operators
\ben
&&(S_k\varphi)(x):=\int_\Gamma\Phi_k(x,y)\varphi(y)ds(y),\quad x\in\Gamma,\\
&&(K_k\varphi)(x):=\int_\Gamma\frac{\pa\Phi_k(x,y)}{\pa\nu(y)}\varphi(y)ds(y),\quad x\in\Gamma,\\
&&(K^T_k\varphi)(x):=\int_\Gamma\frac{\pa\Phi_k(x,y)}{\pa\nu(x)}\varphi(y)ds(y),\quad x\in\Gamma,\\
&&(T_k\varphi)(x):=\frac{\pa}{\pa\nu(x)}\int_\Gamma\frac{\pa\Phi_k(x,y)}{\pa\nu(y)}\varphi(y)ds(y),
  \quad x\in\Gamma.
\enn
Let
\ben
&&(S^\infty_k\varphi)(\hat{x}):=\frac{e^{i\pi/4}}{\sqrt{8\pi k}}\int_\Gamma e^{-ik\hat{x}\cdot y}
  \varphi(y)ds(y),\quad \hat{x}\in\Sp^1,\\
&&(K^\infty_k\varphi)(\hat{x}):=\frac{e^{i\pi/4}}{\sqrt{8\pi k}}
  \int_\Gamma\frac{\pa e^{-ik\hat{x}\cdot y}}{\pa\nu(y)}\varphi(y)ds(y),\quad\hat{x}\in\Sp^1.
\enn
Then it follows that $(S^\infty_k\varphi)(\hat{x})$ and $(K^\infty_k\varphi)(\hat{x})$
are the far-field patterns of $(\mathcal{S}_k\varphi)(x)$ and $(\mathcal{D}_k\varphi)(x)$, respectively.
Now define the volume potential
\ben
(\mathcal{V}_k\varphi)(x):=k^2\int_D \Phi_k(x,y)[n(y)-1]\varphi(y)ds(y),\quad x\in\R^2
\enn
and its restriction and its normal derivative at $\G$, respectively:
\ben
&&(V_k\varphi)(x):=(\mathcal{V}_k\varphi)(x),\quad x\in\Gamma,\\
&&(V_{k,\nu}\varphi)(x):=\frac{\pa\mathcal{V}_k\varphi}{\pa\nu}(x),\quad x\in\Gamma,
\enn
For mapping properties of the above operators, we refer to \cite{CK83,CK13}.

We first consider the impedance obstacle case. By Theorem \ref{thm2}, the imaging function $I_{z_0}(z)$
has the form (\ref{eq9}). For $z\in\R^2$ define
\ben
v^s_{z_0}(x;z):=\int_{\Sp^1}u^s_{z_0}(x;d)e^{-ik(z-z_0)\cdot d}ds(d),\quad x\in\R^2\ba\ov{D}.
\enn
Then $v^s_{z_0}(x;z)$ is the solution to the problem
\be\label{eq10}
\Delta u^s+k^2 u^s&=&0 \quad{\rm in}\;\;\R^2\ba\ov{{D}},\\ \label{eq10+}
\frac{\pa u^s}{\pa\nu}+ik\rho u^s&=&f_z \quad{\rm on}\;\;\G,\\ \label{eq11}
\lim_{r\to\infty}r^\half\left(\frac{\pa u^s}{\pa r}-ik u^s\right)&=&0,\quad r=|x|
\en
with the boundary data
\ben
f_z(x)&=&-2\pi\left(\frac{\pa}{\pa\nu(x)}+ik\rho(x)\right)J_0(k|x-z|)\\
&=&-2\pi k\left(-J_1(k|x-z|)\frac{(x-z)\cdot\nu(x)}{|x-z|}+i\rho(x)J_0(k|x-z|)\right),
\enn
where we have used the formula (\ref{eq22}). By \cite[Theorem 2.2]{KG08} it is known that
$v^s_{z_0}(x;z)$ can be expressed as
\ben
v^s_{z_0}(x;z)=(\mathcal{S}_k\varphi_z)(x)+i(\mathcal{D}_k\ov{K_k}K_k^T\varphi_z)(x),\quad x\in\R^2\ba\ov{D}.
\enn
Here, $\ov{K_k}$ denotes the conjugate of $K_k$ (i.e., $\ov{K_k}\varphi:=\ov{K_k\ol{\varphi}}$) and
$\varphi_z\in H^{-1/2}(\Gamma)$ is the unique solution to the boundary integral equation
$A_I\varphi_z=f_z$, where $A_I$ is bijective and thus invertible in $H^{-1/2}(\Gamma)$ and defined by
\ben
A_I\varphi:=\left(K^T_k-\frac{1}{2}I\right)\varphi+i T_k\ov{K_k}K^T_k\varphi
+ik\rho\left[S_k\varphi+i\left(K_k+\frac{1}{2}I\right)\ov{K_k}K^T_k\varphi\right]
\enn
By a similar argument as in the proof of \cite[Theorem 2.2]{KG08}, it can be shown that
$A_I$ is bijective and thus boundedly invertible in $C(\Gamma)$. Thus,
\ben
C_1\|f_z\|_{C(\Gamma)}\leq\|{\varphi_{z}}\|_{C(\Gamma)}\leq C_2\|f_z\|_{C(\Gamma)}
\enn
for some positive constants $C_1$ and $C_2$.

On the other hand, if $\inf_{x\in\G}\rho(x)>0$, then we have that for $x\in\Gamma$,
\ben
f_z(x)=\left\{\begin{array}{ll}
  \ds   -2\pi ki\rho(x) &\textrm{if}\;z=x,\\
  \ds   O\left(|x-z|^{-1/2}\right)&\textrm{if}\;|z-x|>>1,
    \end{array}\right.
\enn
which leads to the results that
\ben
\left\{\begin{array}{ll}
  \|{\varphi_{z}}\|_{C(\Gamma)}
  \geq 2\pi k C_1\inf_{x\in\G}\rho(x)&\textrm{if}\;z\in\Gamma,\\
  \|{\varphi_{z}}\|_{C(\Gamma)}=O\left({d(z,\Gamma)}^{-1/2}\right)&\textrm{if}\;d(z,\Gamma)>>1,
\end{array} \right.
\enn
where $d(z,\G)$ denotes the distance between $z$ and $\G$. If $\rho=0$ (i.e., the Neumann boundary condition),
then we have that for $x\in\Gamma$,
\ben
f_z(x)=\left\{\begin{array}{ll}
 \ds \mp2\pi k J_1(t_1) &\textrm{if}\;z= x\pm t_1\nu(x)/k\\
 \ds O\left({|x-z|^{-1/2}}\right)&\textrm{if}\;|z-x|>>1
  \end{array} \right.
\enn
with $t_1>0$ being defined as above, which leads to the results that
\ben
\left\{\begin{array}{ll}
\ds \|{\varphi_{z}}\|_{C(\Gamma)}\geq 2\pi k J_1(t_1)C_1&\textrm{if}\;d(z,\Gamma)=t_1/k,\\
\ds \|{\varphi_{z}}\|_{C(\Gamma)}=O\left({d(z,\Gamma)^{-1/2}}\right)&\textrm{if}\;d(z,\Gamma)>>1.
\end{array}\right.
\enn
Note that $v^\infty_{z_0}(\hat{x};z)$ is the far-field pattern of $v^s_{z_0}(x;z)$ and has the form
\ben
v^\infty_{z_0}(\hat{x};z)=(S^\infty_k\varphi_z)(\hat{x})
+(K^\infty_k\ov{K_k}K_k^T\varphi_z)(\hat{x}),\quad \hat{x}\in\Sp^1.
\enn
Then, based on the above observation and the mapping properties of the layer potentials,
it is expected that the function
$$
I^{(1)}_{z_0}(z)=\int_{\Sp^1}\left|v^\infty_{z_0}(\hat{x};z)\right|^2ds(\hat{x})
$$
takes a large value when $z\in\pa{D}$ and decays as $z$ moves away from $D$.

Similarly, by letting $D_{z_0}=\{2z_0-x:x\in{D}\}$ be the central symmetric obstacle of ${D}$
with respect to the point $z_0$, it is expected that the function
$$
I^{(2)}_{z_0}(z)=\int_{\Sp^1}\left|w^\infty_{z_0}(\hat{x};z)\right|^2ds(\hat{x})
$$
will take a large value when $z\in{D}_{z_0}$ and decay as $z$ moves away from ${D}_{z_0}$.

For $I^{(3)}_{z_0}(z)$, using (\ref{eq13}) and (\ref{eq21}) we have that for $z\in\R^2$,
\ben
|I^{(3)}_{z_0}(z)|&\leq& 4\pi\left|J_0(k|z-z_0|)\right|
   \int_{\Sp^1}\int_{\Sp^1}|u^\infty_{z_0}(\hat{x};d)|^2ds(d)ds(\hat{x})\\
&\leq&4\pi\int_{\Sp^1}\int_{\Sp^1}|u^\infty_{z_0}(\hat{x};d)|^2ds(d)ds(\hat{x})
\enn
with
\ben
I^{(3)}_{z_0}(z_0)=4\pi\int_{\Sp^1}\int_{\Sp^1}|u^\infty_{z_0}(\hat{x};d)|^2ds(d)ds(\hat{x})
\enn
and $I^{(3)}_{z_0}(z)=O\left({d(z,z_0)^{-1/2}}\right)$ when $d(z,z_0)>>1.$
Therefore, $I^{(3)}_{z_0}(z)$ will take its maximum at $z=z_0$ and decay as $z$ moves away from $z_0$.
From the above discussion, it is expected that the imaging function $I_{z_0}(z)$
will take a large value when the sampling point $z$ approaches $\pa{D}\cup\pa{D}_{z_0}\cup\{z_0\}$
and decay as $z$ moves away from ${{D}}\cup{{D}}_{z_0}\cup\{z_0\}$.
This is indeed confirmed in the numerical examples.

\begin{remark}\label{rem1} {\rm
In the numerical examples, it is observed that for the sound-hard obstacle $D$,
if $d(z_0,D)$ is large enough then the imaging function $I_{z_0}(z)$ takes a small value
on the boundary of the obstacle $D$ and its central symmetric one ${D}_{z_0}$.
This might be due to the fact that $J_1(0)=0$ since, in this case, $v^\infty_{z_0}(\hat{x};z)$ and
$w^\infty_{z_0}(\hat{x};z)$ are the far-field patterns of the solutions to the scattering problem
(\ref{eq10})-(\ref{eq11}) with boundary data $f_z(x)=2\pi k J_1(k|x-z|){(x-z)\cdot\nu(x)}/{|x-z|}$
and $f_z(x)=2\pi k J_1(k|x+z-2z_0|){(x+z-2z_0)\cdot\nu(x)}/{|x+z-2z_0|}$, respectively.
Note that a similar feature was found in \cite{P10} for the orthogonality sampling method.
}
\end{remark}

We now consider the case of a sound-soft obstacle. It is easy to see that
$v^\infty_{z_0}(\hat{x};z)$ and $w^\infty_{z_0}(\hat{x};z)$ are the far-field patterns of
the solutions to the scattering problem
\ben
\Delta u^s+k^2 u^s&=&0\quad{\rm in}\;\;\R^2\ba\ol{{D}},\\
u^s&=&f_z\quad {\rm on}\;\;\G\\
\lim_{r\to\infty}r^\half\left(\frac{\pa u^s}{\pa r}-ik u^s\right)&=&0,\quad r=|x|
\enn
with boundary data $f_z(x)=-2\pi J_0(k|x-z|)$ and $f_z(x)=-2\pi J_0(k|x+z-2z_0|)$, respectively.
By the integral equation method (see, e.g. \cite[Section 3.2]{CK13}) and the properties
of the Bessel functions, it is also expected that the imaging function $I_{z_0}(z)$
takes a large value when the sampling point $z$ approaches $\pa{D}\cup\pa{D}_{z_0}\cup\{z_0\}$
and decays as $z$ moves away from ${{D}}\cup{{D}}_{z_0}\cup\{z_0\}$.
The discussion is similar to the impedance obstacle case, so it is omitted.

Finally, we consider the case of a penetrable obstacle. Define
\ben
&&v^s_{z_0}(x;z):=\int_{\Sp^1}u^s_{z_0}(x;d)e^{-ik(z-z_0)\cdot d}ds(d),\quad x\in\R^2\ba\ol{D},\\
&&v_{z_0}(x;z):=\int_{\Sp^1}u_{z_0}(x;d)e^{-ik(z-z_0)\cdot d}ds(d),\quad x\in D.
\enn
Then the pair of functions ($v^s_{z_0}(\cdot;z),v_{z_0}(\cdot;z))$ is the solution to the scattering problem:
\ben
\Delta u^s+k^2 u^s&=&0\quad\textrm{in}\;\;\R^2\ba\ol{{D}},\\
\Delta u+k^2 nu&=&0\quad\textrm{in}\;\;{D},\\
u^s_+ -u_-=f_{1,z},\quad\frac{\pa u^s_+}{\pa\nu}-\lambda\frac{\pa u_-}{\pa\nu}&=&f_{2,z}\quad\textrm{on}\;\;\G,\\
\lim_{r\to\infty}r^\half\left(\frac{\pa u^s}{\pa r}-ik u^s\right)&=&0,\quad r=|x|
\enn
with boundary data $f_{1,z}(x)=-2\pi J_0(k|x-z|)$ and $f_{2,z}(x)=-2\pi{\pa}J_0(k|x-z|)/{\pa\nu(x)}$,
and $v^\infty_{z_0}(\hat{x};z)$ is the far-field pattern of $v^s_{z_0}(x;z)$.
By the integral equation method (see \cite{CP98}), it follows that
$v^s_{z_0}(x;z)$ and $v_{z_0}(x;z)$ can be expressed as
\ben
v^s_{z_0}(x;z)&=&\lambda(\mathcal{D}_k\varphi_{1,z})(x)+(\mathcal{S}_k\varphi_{2,z})(x),\quad x\in\R^2\ba\ol{D},\\
v_{z_0}(x;z)&=&(\mathcal{D}_k\varphi_{1,z})(x)+(\mathcal{S}_k\varphi_{2,z})(x)+(\mathcal{V}\varphi_{3,z})(x),
\quad x\in D,
\enn
where $\phi_z:=(\varphi_{1,z},\varphi_{2,z},\varphi_{3,z})^T$ is the unique solution to the integral equation
$A_T\phi_z=F_z$ with
\ben
A_T=\left(\begin{array}{ccc}
\frac{\la+1}{2}I+(\la-1)K_k&0&-V_k\\
0&-\frac{\la+1}{2}I-(\la-1)K^T_k&-\la V_{k,\nu}\\
-\wid{K}_k&-\wid{S}_k&I-\wid{V}_k
\end{array}\right),\quad
F_z=\left(\begin{array}{c}
f_{1,z}\\
f_{2,z}\\
0
\end{array}\right).
\enn
Here, $\wid{S}_k$, $\wid{K}_k$ and $\wid{V}_k$ are the restrictions to $D$ of $\mathcal{S}_k\varphi$,
$\mathcal{D}_k\varphi$ and $\mathcal{V}_k\varphi$, respectively, that is,
$\wid{S}_k\varphi:=(\mathcal{S}_k\varphi)|_D$, $\wid{K}_k\varphi:=(\mathcal{D}_k\varphi)|_D$,
$\wid{V}_k\varphi:=(\mathcal{V}_k\varphi)|_D$.
Further, $A_I$ is bijective, so it is invertible in $C(\Gamma)\times C(\Gamma)\times C(\ol{D})$ (see \cite{CP98}).
Thus we have
\ben
C_1\left(\|f_{1,z}\|_{C(\Gamma)}+\|f_{2,z}\|_{C(\Gamma)}\right)
&\le& \|{\varphi_{1,z}}\|_{C(\Gamma)}+\|{\varphi_{2,z}}\|_{C(\Gamma)}+\|{\varphi_{3,z}}\|_{C(\ol{D})}\\
&\le& C_2\left(\|f_{1,z}\|_{C(\Gamma)}+\|f_{2,z}\|_{C(\Gamma)}\right).
\enn
On the other hand, from the properties of the Bessel functions, we have that for $x\in\Gamma$,
\ben
f_{1,z}(x)=\left\{\begin{array}{ll}
-2\pi &\textrm{if}\;\;z=x,\\
O\left({|x-z|^{-1/2}}\right)&\textrm{if}\;\;|z-x|>>1
\end{array}\right.
\;\textrm{and}\;\;\;
f_{2,z}(x)=\left\{\begin{array}{ll}
0 &\textrm{if}\;\;z=x,\\
O\left({|x-z|^{-1/2}}\right)&\textrm{if}\;\;|z-x|>>1.
\end{array} \right.
\enn
Therefore, we have
\ben
\left\{\begin{array}{ll}
\|{\varphi_{1,z}}\|_{C(\Gamma)}+\|{\varphi_{2,z}}\|_{C(\Gamma)}+\|{\varphi_{3,z}}\|_{C(\ol{D})}
    \ge 2\pi C_1&\textrm{if}\;\;z\in\Gamma,\\
\|{\varphi_{1,z}}\|_{C(\Gamma)}+\|{\varphi_{2,z}}\|_{C(\Gamma)}+\|{\varphi_{3,z}}\|_{C(\ol{D})}
   =O\left({d(z,\Gamma)^{-1/2}}\right)&\textrm{if}\;\;d(z,\Gamma)>>1.
\end{array} \right.
\enn
Noting that $v^\infty_{z_0}(\hat{x};z)=\la(K^\infty_k\varphi_{1,z})(\hat{x})+(K^\infty_k\varphi_{2,z})(\hat{x})$,
$\hat{x}\in\Sp^1$, we expect that the function $I^{(1)}_{z_0}(z)$ takes a large value when $z\in\pa{D}$
and decays as $z$ moves away from $D$. Similarly as before for other cases,
the functions $I^{(2)}_{z_0}(z)$ and $I^{(3)}_{z_0}(z)$ have a similar behavior as $I^{(1)}_{z_0}(z)$.
Thus it can be expected that the imaging function $I_{z_0}(z)$
will take a large value when the sampling point $z\in\pa{D}\cup\pa{D}_{z_0}\cup\{z_0\}$
and decay as $z$ moves away from ${D}\cup{D}_{z_0}\cup\{z_0\}$.

\begin{remark}\label{rem2} {\rm
Assume that $u^\infty_{z_0}(\hat{x};d_1,d_2,D)$ and $u^\infty_{z_0}(\hat{x};d_1,d_2,D_{z_0})$
are the far-field patterns of the solutions of either the scattering problem (\ref{eq1})-(\ref{eq2})
and (\ref{eq6}) with the Dirichlet obstacles $D$ and $D_{z_0}$, respectively, or the
impedance scattering problem (\ref{eq1}), (\ref{eq3}) and (\ref{eq6}) with the impedance obstacle and
the impedance function being $(D,\rho)$ and $(D_{z_0},\rho_{z_0})$, respectively, where
$\rho_{z_0}(x):=\rho(2z_0-x)$, $x\in\pa D_{z_0}$,
or the transmission scattering problem (\ref{eq1}), (\ref{eq4})-(\ref{eq6}) with the penetrable obstacle,
the transmission constant and the refractive index being $(D,\la,n)$ and $(D_{z_0},\la,n_{z_0})$, respectively,
where $n_{z_0}(x):=n(2z_0-x)$, $x\in\pa D_{z_0}$. Then it is easy to see that
\be\label{eq30}
u^\infty_{z_0}(\hat{x};d,D_{z_0})=e^{-2ikz_0\cdot\hat{x}}u^\infty_{z_0}(-\hat{x};-d,D),
\quad\forall\;\hat{x},d\in\Sp^1.
\en
Denote by $I_{z_0}(z,D)$ and $I_{z_0}(z,D_{z_0})$ the imaging function (\ref{eq14}) with respect to
$|u^\infty_{z_0}(\hat{x};d_1,d_2,D)|$ and $|u^\infty_{z_0}(\hat{x};d_1,d_2,D_{z_0})|$, respectively.
Then, by (\ref{eq30}) and change of variables we have
\ben
I_{z_0}(z,D_{z_0})&=&\left|\int_{\Sp^1}\int_{\Sp^1}\int_{\Sp^1}\left|u^\infty_{z_0}(\hat{x};d_1,D_{z_0})
  +u^\infty_{z_0}(\hat{x};d_2,D_{z_0})\right|^2e^{ik(z-z_0)\cdot d_1}
  e^{-ik(z-z_0)\cdot d_2}ds(d_1)ds(d_2)ds(\hat{x})\right|\\
&=&\left|\int_{\Sp^1}\int_{\Sp^1}\int_{\Sp^1}\left|u^\infty_{z_0}(-\hat{x};-d_1,D)
   +u^\infty_{z_0}(-\hat{x};-d_2,D)\right|^2
   e^{ik(z-z_0)\cdot d_1}e^{-ik(z-z_0)\cdot d_2} ds(d_1)ds(d_2)ds(\hat{x})\right|\\
&=&\left|\int_{\Sp^1}\int_{\Sp^1}\int_{\Sp^1}\left|u^\infty_{z_0}(\hat{x};d_1,D)
   +u^\infty_{z_0}(\hat{x};d_2,D)\right|^2
   e^{-ik(z-z_0)\cdot d_1}e^{ik(z-z_0)\cdot d_2} ds(d_1)ds(d_2)ds(\hat{x})\right|\\
&=&I_{z_0}(z,D)
\enn
for all $z\in\R^2$. This means that the actual obstacle ${D}$ and the artifact image $D_{z_0}$ can not be
distinguished by the imaging function $I_{z_0}(z)$ with a fixed $z_0$.
However, since the location of the artifact image ${D}_{z_0}$ depends on the point $z_0$, we can use
the imaging function $I_{z_0}(z)$ with two different points $z_0$ to recover the actual obstacle $D$.
}
\end{remark}

We now introduce the direct imaging method for the inverse scattering problem with phaseless far-field data.
We assume that there are $M$ measurement points $\hat{x_i}$, $i=1,\ldots,M$, uniformly distributed on $\Sp^1$
and $N$ sets of two incident directions $d_{1j},d_{2l}$, $j,l=1,\ldots,N$, uniformly distributed on $\Sp^1$
with $d_{1j}\neq d_{2l}$ for $j\not=l$.
For $z_0\in\R^2$ let $|u^\infty_{z_0}(\hat{x_i};d_{1j},d_{2l})|^2$, $i=1,\ldots,M$, $j,l=1,\ldots,N$, be
the measured phaseless far-field data, where $u^\infty_{z_0}(\hat{x};d_{1},d_{2})$ is the far-field pattern
of the solution of either the scattering problem (\ref{eq1})-(\ref{eq2}) and (\ref{eq6}), or the
impedance scattering problem (\ref{eq1}), (\ref{eq3}) and (\ref{eq6}),
or the transmission scattering problem (\ref{eq1}), (\ref{eq4})-(\ref{eq6}), with the incident field
$u^i=u^i(x,d_1,d_2)=\exp[ikd_1\cdot(x-z_0)]+\exp[ikd_2\cdot(x-z_0)]$, $d_1,d_2\in\Sp^1$, $d_1\not=d_2.$
Define
\be\label{eq23}
I^A_{z_0}(z):=\left|\frac{2\pi}{M}\left(\frac{2\pi}{N}\right)^2\sum_{i=1}^M\sum_{j=1}^N
\sum_{l=1,l\neq j}^N\left|u^\infty_{z_0}(\hat{x_i};d_{1j},d_{2l})\right|^2
e^{ik(z-z_0)\cdot d_{1j}}e^{-ik(z-z_0)\cdot d_{2l}}\right|.
\en
Then $I^A_{z_0}(z)$ is a good trapezoid quadrature approximation to the continuous imaging function $I_{z_0}(z)$.
Our direct imaging method is based on the formula (\ref{eq23}) and presented in the following algorithm.

\begin{algorithm}\label{al1}
Let $\Om_L$ be a large sampling domain containing the obstacle $D$.

Step 1. Locating a smaller domain containing the obstacle $D$.

1.1. Choose a point $z_0=z_{10}\in\R^2$. Collect the measurement data
$|u^\infty_{z_{10}}(\hat{x_i};d_{1j},d_{2l})|^2$, $i=1,\ldots,M$, $j,l=1,2,\ldots,N$,
corresponding to the incident plane waves $u^i_{z_{10}}(x,d_{1j},d_{2l})$,
$j,l=1,2,\ldots,N$, with $d_{1j}\neq d_{2l}$ for $j\not=l$.
For $z\in\Om_L$, compute the imaging function $I^A_{z_{10}}(z)$ by using the formula (\ref{eq23}).
Locate all the sampling points $z$ at which $I^A_{z_{10}}(z)$ takes a large value, representing the
reconstructed image of ${D}\cup {D}_{z_{10}}\cup z_{10}$ in the large sampling region $\Om_L$.

1.2. Choose another point $z_0=z_{20}\in\R^2$ which is away from the reconstructed image
of ${D}\cup {D}_{z_{10}}\cup z_{10}$ in the previous step (1.1).
Collect the measurement data $|u^\infty_{z_{20}}(\hat{x_i};d_{1j},d_{2l})|^2$,
$i=1,\dots,M$, $j,l=1,2,\ldots,N$, corresponding to the incident plane waves $u^i_{z_{20}}(x,d_{1j},d_{2l})$,
$j,l=1,2,\ldots,N$, where $d_{1j}\neq d_{2l}$ for $j\not=l$.
For $z\in\Om_L$, compute the imaging function $I^A_{z_{20}}(z)$ by using (\ref{eq23}) again.
Locate all the sampling point $z$ at which $I^A_{z_{20}}(z)$ takes a large value, representing the reconstructed
image of ${D}\cup {D}_{z_{20}}\cup z_{20}$ in the sampling region $\Om_L$.
Note that the the actual obstacle $D$ is independent of the choice of the points ${z_{10}}$ and ${z_{20}}$,
while the artifact images $D_{z_{10}}$ and $D_{z_{20}}$ depend on the points $z_{10}$ and $z_{20}$.
Thus, in this way, we can distinguish between the actual obstacle $D$ and its two artifact images $D_{z_{10}}$
and $D_{z_{20}}$. Denote by $\Om_S$ the smaller domain which only contains the actual obstacle $D$
but not its artifact images.

Step 2. Reconstructing the obstacle $D$. For $z\in\Om_S$ compute the imaging function $I^A_{z_{20}}(z)$
by using the formula (\ref{eq23}) and locate all the sampling point $z$ at which $I_{z_{20}}(z)$ takes
a large value to reconstruct the obstacle $D$.
\end{algorithm}

The procedure of Algorithm \ref{al1} will be presented in Example 1 in the next section on numerical experiments,
where the actual obstacle $D$ is accurately reconstructed.

\begin{remark}\label{rem3} {\rm
It is easy to see that our direct imaging method does not need to know the type of boundary conditions on
the obstacle in advance. Further, it is known from the numerical examples that the proposed
direct imaging method can also be used to reconstruct multiple obstacles with different boundary conditions.
}
\end{remark}

\section{Numerical experiments}\label{sec3}
\setcounter{equation}{0}

We present several numerical examples to illustrate the effectiveness of the direct imaging algorithm \ref{al1}.
Our algorithm will be compared with the direct imaging method using the full far-field data given in \cite{P10}
and based on the imaging function
\ben\label{eq23+}
I^A_F(z):=\frac{2\pi}{M}\frac{2\pi}{N}\sum_{i=1}^M\left|\sum_{j=1}^N
u^\infty(\hat{x_i};d_j)e^{ikz\cdot\hat{x_i}}\right|^2,
\enn
which is the trapezoid quadrature approximation to the continuous imaging function
\ben
I_F(z):=\int_{\Sp^1}\left|\int_{\Sp^1}u^\infty(\hat{x};d)e^{ik z\cdot\hat{x}}ds(\hat{x})\right|^2ds(d).
\enn
Here, $u^\infty(\hat{x};d)$ is the far-field pattern of the scattering solution generated by the incident
plane wave $u^i=e^{ikd\cdot x}$ with $\hat{x},d\in\Sp^1$,
$\hat{x_i}$, $i=1,\ldots,M$, are the measurement points uniformly distributed on $\Sp^1$,
and $d_j$, $j,l=1,\ldots,N$, are the incident directions uniformly distributed on $\Sp^1$.

To generate the synthetic data, we use the Nystr\"{o}m method \cite{CK13} to solve the direct scattering problem.
Unless otherwise stated, the far-field data are measured with $360$ incident and observed directions which are
uniformly distributed on $\Sp^1$ (that is, $M=N=360$).
Further, the noisy far-field data $u^\infty_{\delta}(\hat{x};d)$ and the noisy phaseless far-field data
$|u^\infty_{z_0,\delta}(\hat{x};d_1,d_2)|^2$ are given as follows:
\ben
&&u^\infty_{\delta}(\hat{x};d)=u^\infty(\hat{x};d)+\delta(\zeta_1+i\zeta_2)\max|u^\infty(\hat{x};d)|,\\
&&|u^\infty_{z_0,\delta}(\hat{x};d_1,d_2)|^2=|u^\infty_{z_0}(\hat{x};d_1,d_2)|^2
+\delta\zeta_3\max|u^\infty_{z_0}(\hat{x};d_1,d_2)|^2,
\enn
where $\delta$ is the noise ratio and $\zeta_1,\zeta_2,\zeta_3$ are the standard normal distributions.

The parametrization of the curves we used are given in Table \ref{table1}.
\begin{table}[h]
\centering
\begin{tabular}{ll}
\hline
Type & Parametrization\\
\hline
Circle &$(c_1,c_2)+r(\cos{t},\sin{t}),\;t\in[0,2\pi]$\\
Apple shaped &$(c_1,c_2)+[({0.5+0.4\cos{t}+0.1\sin(2t)})/({1+0.7\cos{t}})](\cos{t},\sin{t}),\;t\in[0,2\pi]$\\
Kite shaped &$(c_1,c_2)+(\cos{t}+0.65\cos(2t)-0.65,1.5\sin{t}),\;t\in[0,2\pi]$\\
Peanut shaped&$(c_1,c_2)+\sqrt{\cos^2{t}+0.25\sin^2{t}}(\cos{t},\sin{t})$, $t\in[0,2\pi]$\\
Rounded square & $(c_1,c_2)+({3}/{2})(\cos^3{t}+\cos{t},\sin^3{t}+\sin{t}),\;t\in[0,2\pi]$\\
Rounded triangle &$(c_1,c_2)+(2+0.3\cos(3t))(\cos{t},\sin{t}),\;t\in[0,2\pi]$\\
\hline
\end{tabular}
\caption{Parametrization of the curves}\label{table1}
\end{table}

\textbf{Example 1: Imaging of a sound-soft obstacle.}

We consider the scattering problem by an apple-shaped, sound-soft obstacle.
The wave number is chosen to be $k=20$.
We first show how to use Step 1 of Algorithm \ref{al1} to locate the small region which contains
the actual obstacle. Choose the large sampling region ${K_L}$ to be $[-12,2]\times[-12,2]$
which contains the actual obstacle (see Figure \ref{fig1-1}).
First choose the point $z_0=z_{10}:=(-1,-5)^T$.
Figure \ref{fig1-2} presents the corresponding imaging result of $I^A_{z_{10}}(z)$, $z\in{K_L}$,
from the phaseless far-field data with $10\%$ noise.
Next choose the point $z_0=z_{20}:=(-5,-4)^T$.
Figure \ref{fig1-3} presents the corresponding imaging result of $I^A_{z_{20}}(z)$, $z\in{K_L}$,
from the phaseless far-field data with $10\%$ noise.
We observe that $I^A_{z_0}$ takes a large value in the neighborhood of the point $z_0$, the actual obstacle
and its central symmetry obstacle with respect to $z_0$ (i.e., the artifact image $D_{z_0}$),
which is consistent with Theorem \ref{thm2} and the discussion in Section \ref{sec3}.
From Figures \ref{fig1-2} and \ref{fig1-3} we see that the reconstructed obstacle in the
region $[-1,1]\times[-1,1]$ is the actual obstacle which is independent of the choice of the point $z_0$,
while the reconstructed artifact image depends on the choice of the point $z_0$ and thus moves in the
large region $K_L$ with different $z_0$.
Therefore, we can locate the small sampling region ${K_S}=[-1,1]\times[-1,1]$ containing the actual obstacle.
Figure \ref{fig1-4} presents the imaging result of $I^A_{z_{20}}(z),\,z\in{K_S}$, from the phaseless far-field
data with $10\%$ noise which gives a satisfactory reconstruction of the actual obstacle.

For simplicity, we only present the reconstruction results by using Step 2 of Algorithm \ref{al1}
for the remaining examples.

We now compare the reconstruction results obtained by using the phaseless and full far-field
measurement data. We choose the sampling region to be $[-1,1]\times[-1,1]$ and $z_0=(-5,-4)^T$.
Figure \ref{fig2} presents the exact curve, the imaging results of $I^A_{z_0}(z)$ and $I_F(z)$
from the measured data without noise, with $10\%$ noise and with $20\%$ noise, respectively.
It is shown that the reconstruction results given by the two imaging methods are stable,
accurate and robust to noise in the data.
It is remarked that, due to the influence of $I^{(2)}_{z_0}$ and $I^{(3)}_{z_0}$
in the formula (\ref{eq9}), the imaging results of $I^A_{z_0}(z)$ with the phaseless far-field data
are not as good as those of $I^A_F(z)$ with the full far-field data.

\begin{figure}[htbp]
  \centering
  \subfigure[\textbf{Exact curve}]{\label{fig1-1}
    \includegraphics[width=3in]{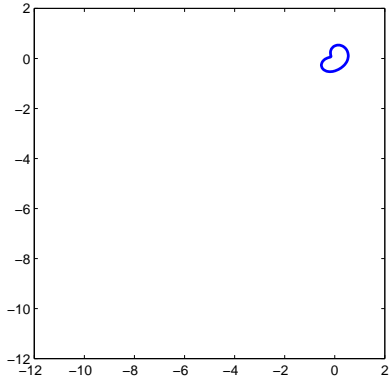}}
  \subfigure[\textbf{Step 1: 10\% noise, k=20}]{\label{fig1-2}
    \includegraphics[width=3in]{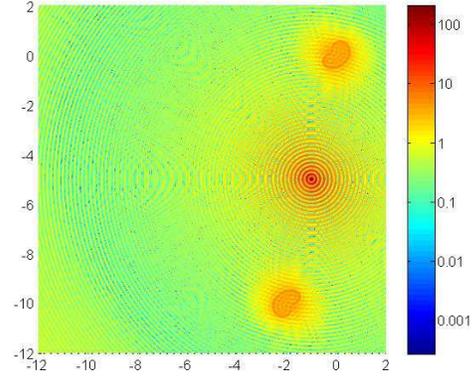}}
  \subfigure[\textbf{Step 2: 10\% noise, k=20}]{\label{fig1-3}
    \includegraphics[width=3in]{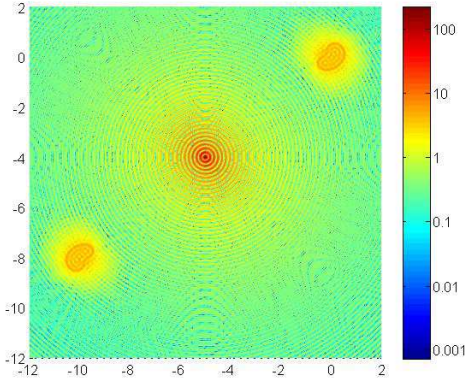}}
  \subfigure[\textbf{Step 3: 10\% noise, k=20}]{\label{fig1-4}
    \includegraphics[width=3in]{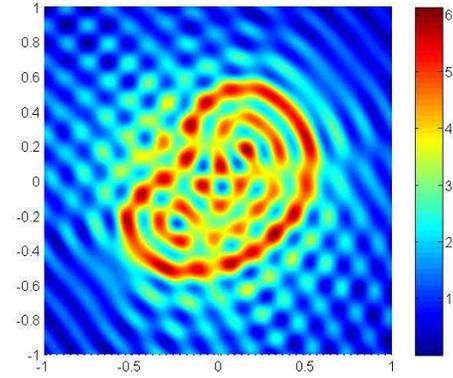}}
\caption{Imaging results of an apple-shaped, sound-soft obstacle with Algorithm \ref{al1}
using phaseless far-field data with $10\%$ noise.
(a) gives the exact curve, (b) presents the imaging result of $I_{z_{10}}(z),\,z\in {K_L}$ with
$z_{10}=(-1,-5)^T$, (c) shows the imaging result of $I_{z_{20}}(z),\,z\in {K_L}$ with $z_{20}=(-5,-4)^T$,
and (d) presents the imaging result of $I_{z_{20}}(z),\,z\in {K_S}$.
The large sampling region ${K_L}=[-12,2]\times[-12,2]$, the small sampling region ${K_S}=[-1,1]\times[-1,1]$,
and the wave number $k=20$.
}\label{fig1}
\end{figure}

\begin{figure}[htbp]
  \centering
  \subfigure[\textbf{Exact curve}]{
    \includegraphics[width=1.5in]{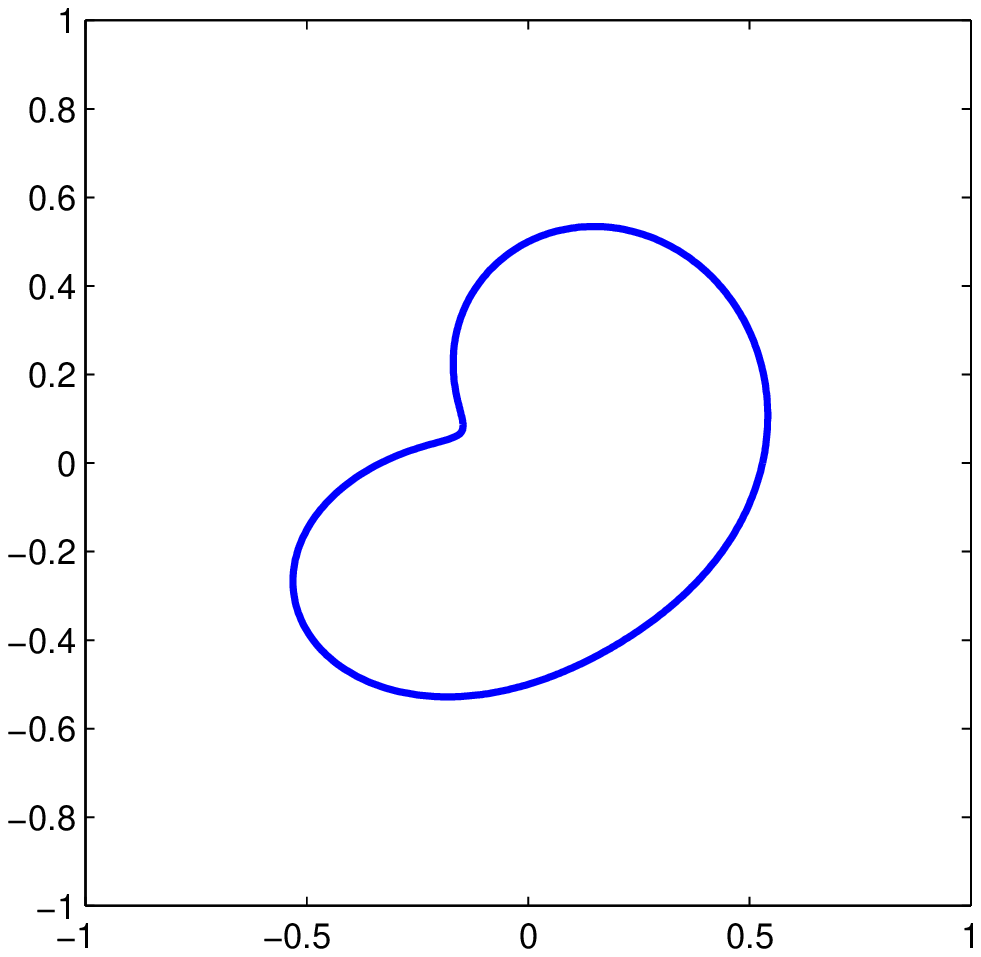}}
  \subfigure[\textbf{No noise, k=20}]{
    \includegraphics[width=1.5in]{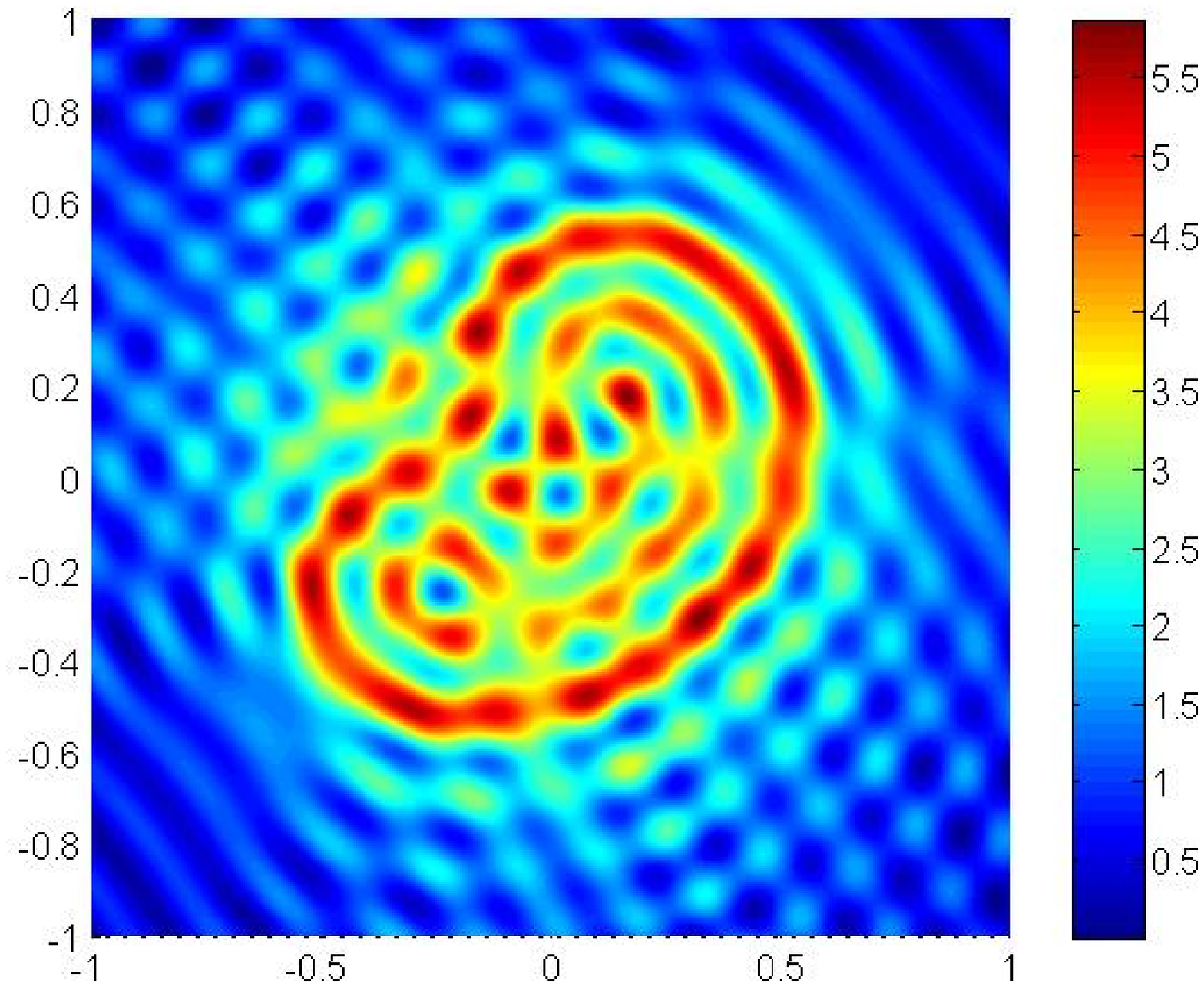}}
  \subfigure[\textbf{10\% noise, k=20}]{
    \includegraphics[width=1.5in]{pic/example1/Phaseless/percent_10_error.eps}}
  \subfigure[\textbf{20\% noise, k=20}]{
    \includegraphics[width=1.5in]{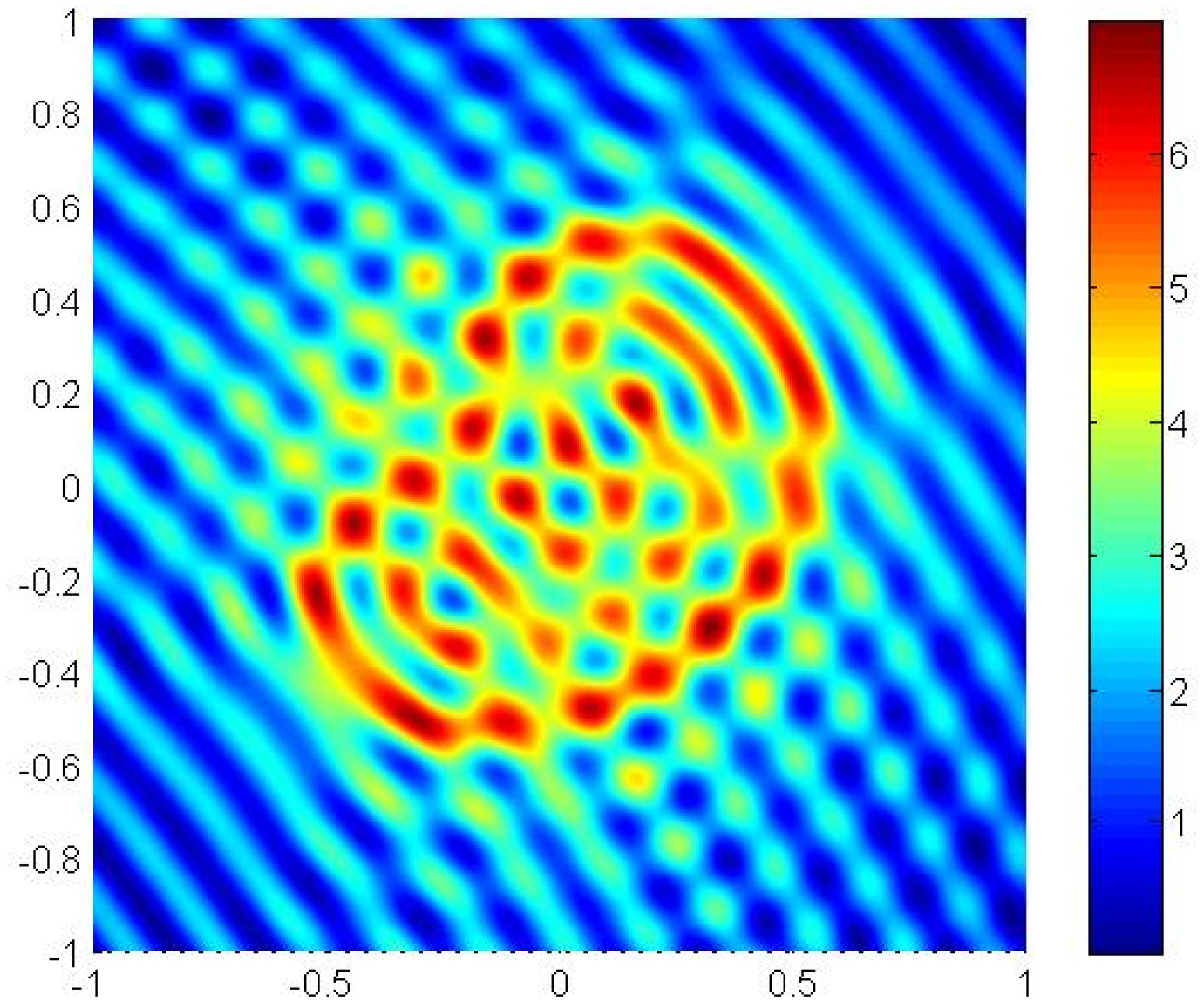}}
  \subfigure[\textbf{No noise, k=20}]{
    \includegraphics[width=1.5in]{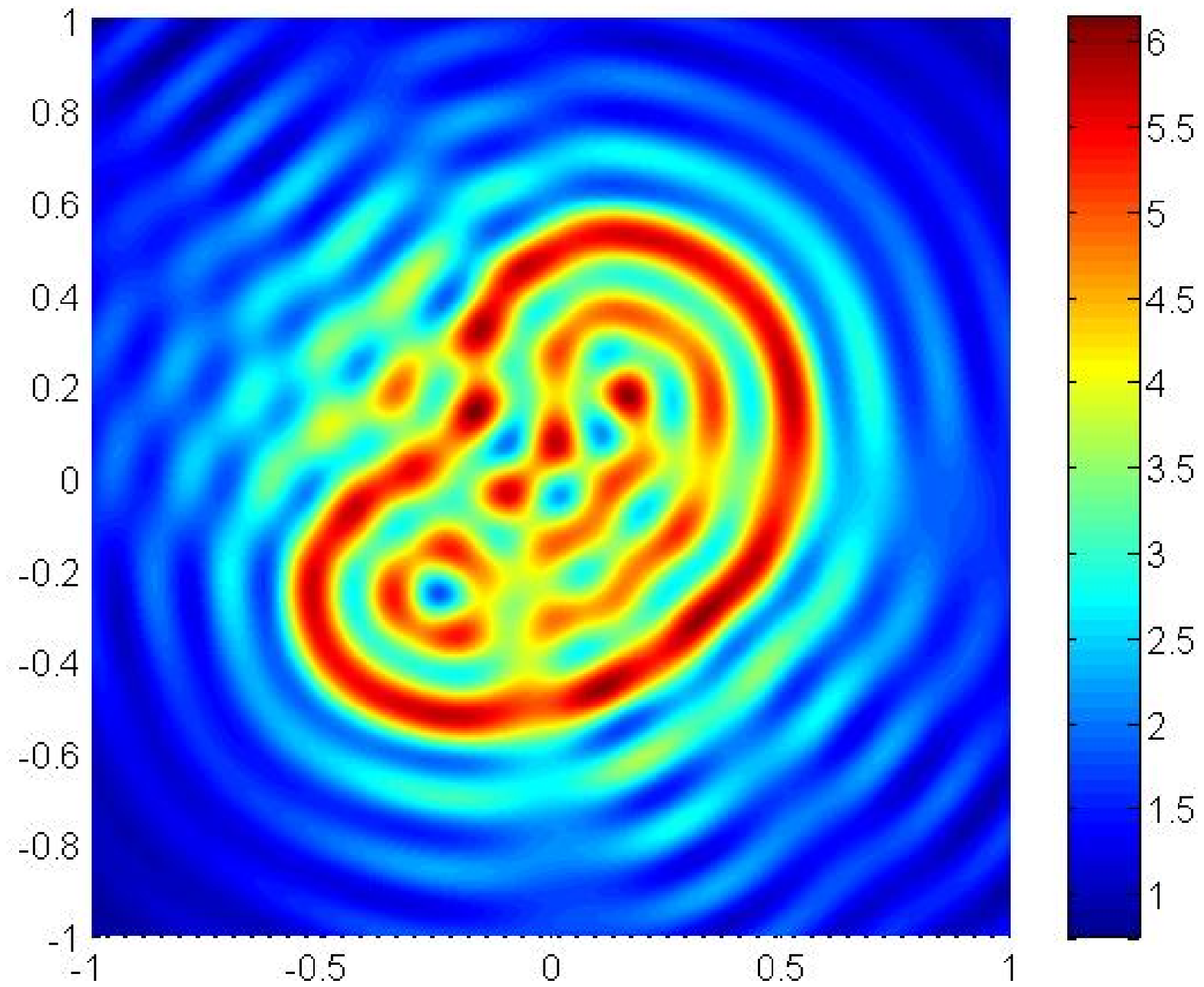}}
  \subfigure[\textbf{10\% noise, k=20}]{
    \includegraphics[width=1.5in]{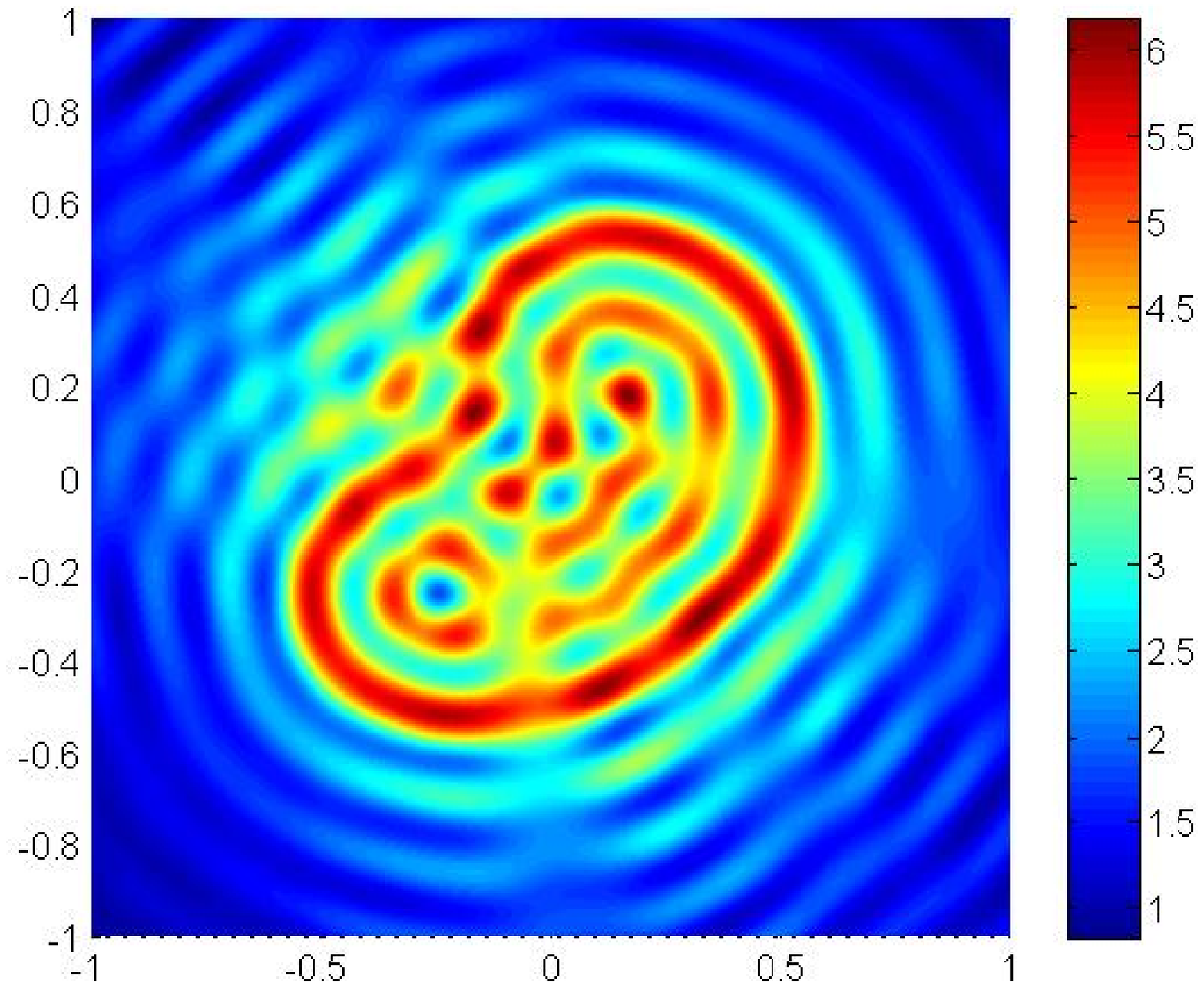}}
  \subfigure[\textbf{20\% noise, k=20}]{
    \includegraphics[width=1.5in]{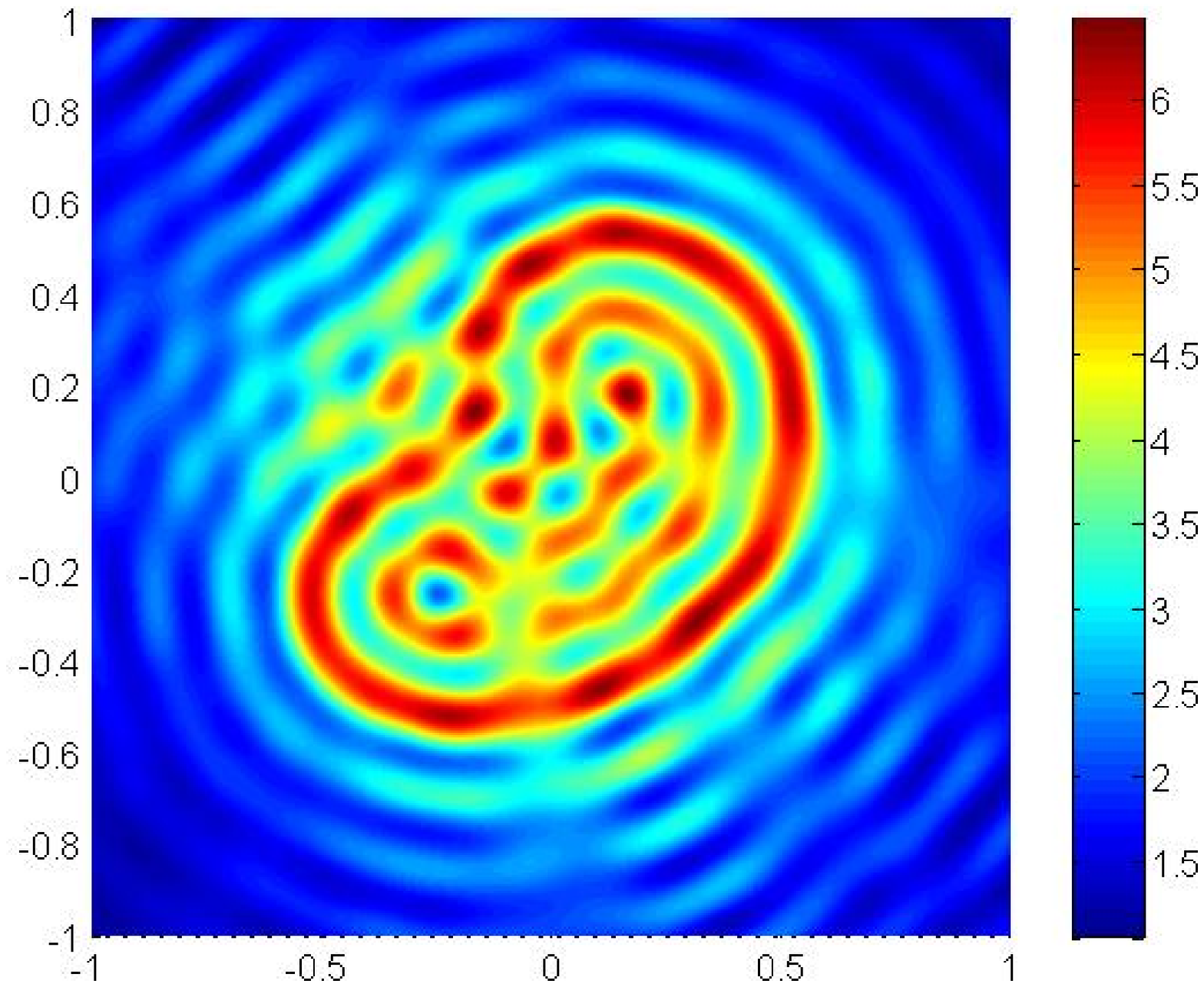}}
\caption{Imaging results of an apple-shaped, sound-soft obstacle given by Algorithm \ref{al1}
with phaseless data (top row) and
by the imaging algorithm with $I^A_F(z)$ in \cite{P10} with full data (bottom row), respectively.
}\label{fig2}
\end{figure}

\textbf{Example 2: Reconstruction of an impedance obstacle.}

We now consider a kite-shaped, impedance obstacle. The sampling region is assumed to be $[-3,3]\times[-3,3]$.
We choose $z_0=(12,0)^T$ for our algorithm. The wave number is chosen to be $k=20$.
Figure \ref{fig3} presents the exact curve, the imaging results of $I_{z_0}(z)$
and $I^A_F(z)$ from the measured data without noise, with $10\%$ noise and with $20\%$ noise, respectively,
for the case when the impedance function $\rho=0$ (i.e., the Neumann boundary condition).
Figure \ref{fig4} presents the exact curve, the imaging results of $I^A_{z_0}(z)$ and $I^A_F(z)$ from the
measured data without noise, with $10\%$ noise and with $20\%$ noise, respectively, for the case
when the impedance function $\rho(x(t))=2+0.5\sin(t),t\in[0,2\pi]$.
The reconstruction results in Figures \ref{fig3} and \ref{fig4} are satisfactory and similar to those
in Figure \ref{fig1}. Note that in Figure \ref{fig3} the imaging function $I^A_{z_0}(z)$
takes a small value on the boundary of the obstacle $D$.
This is consistent with the discussion in Remark \ref{rem1}.

\begin{figure}[htbp]
  \centering
  \subfigure[\textbf{Exact curve}]{
    \includegraphics[width=1.5in]{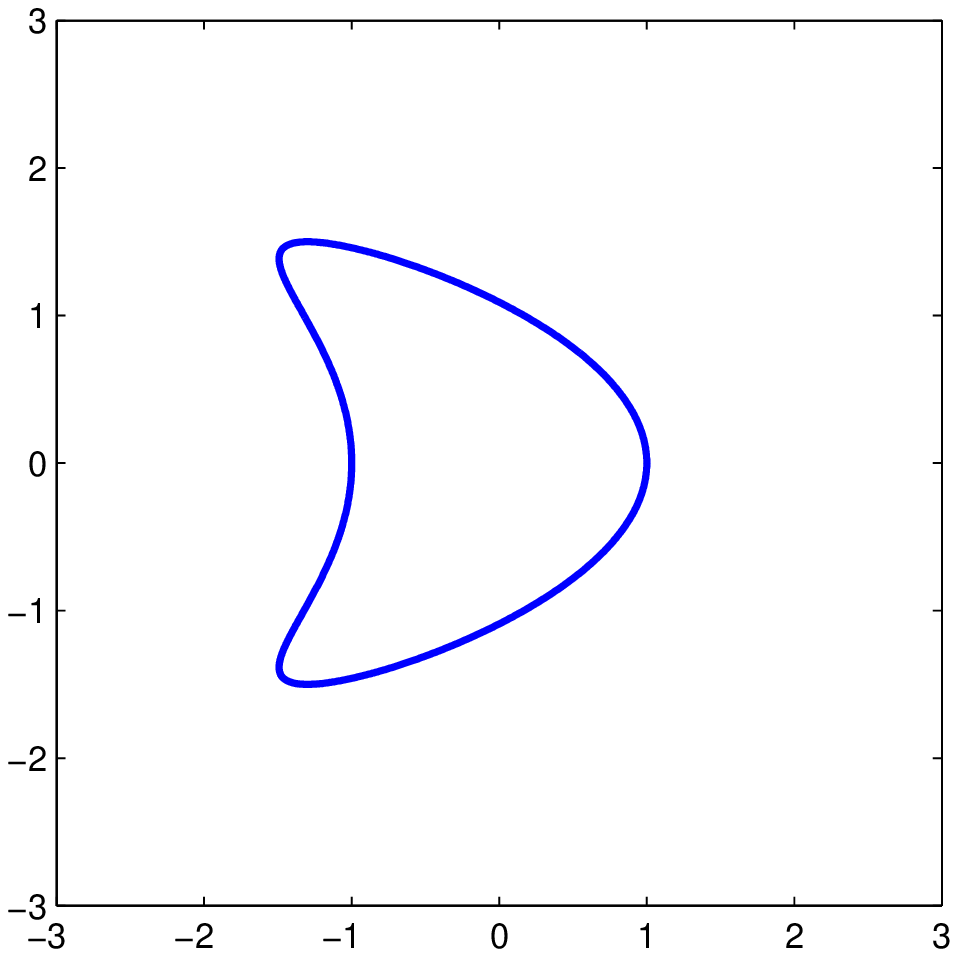}}
  \subfigure[\textbf{No noise, k=20}]{
    \includegraphics[width=1.5in]{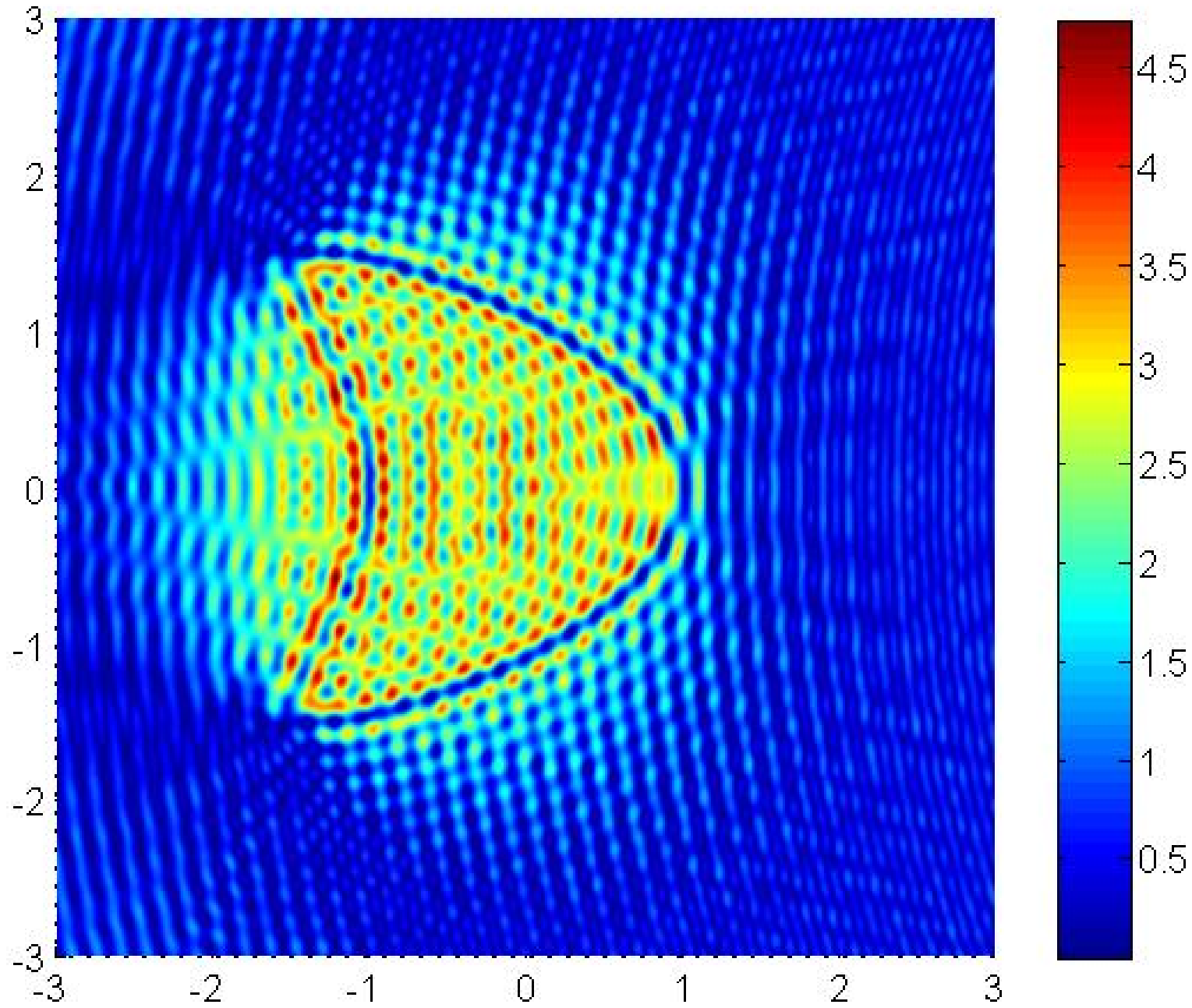}}
  \subfigure[\textbf{10\% noise, k=20}]{
    \includegraphics[width=1.5in]{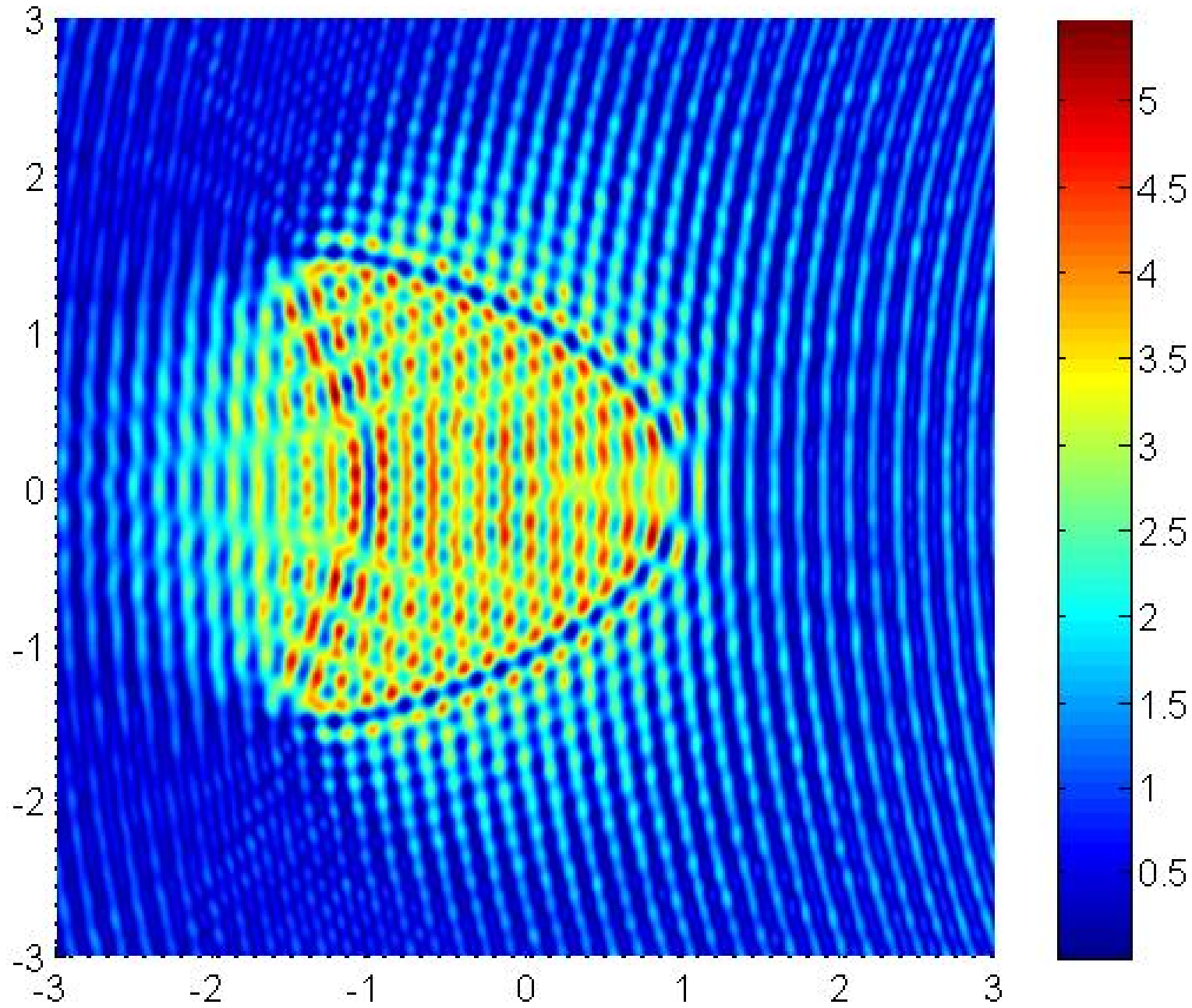}}
  \subfigure[\textbf{20\% noise, k=20}]{
    \includegraphics[width=1.5in]{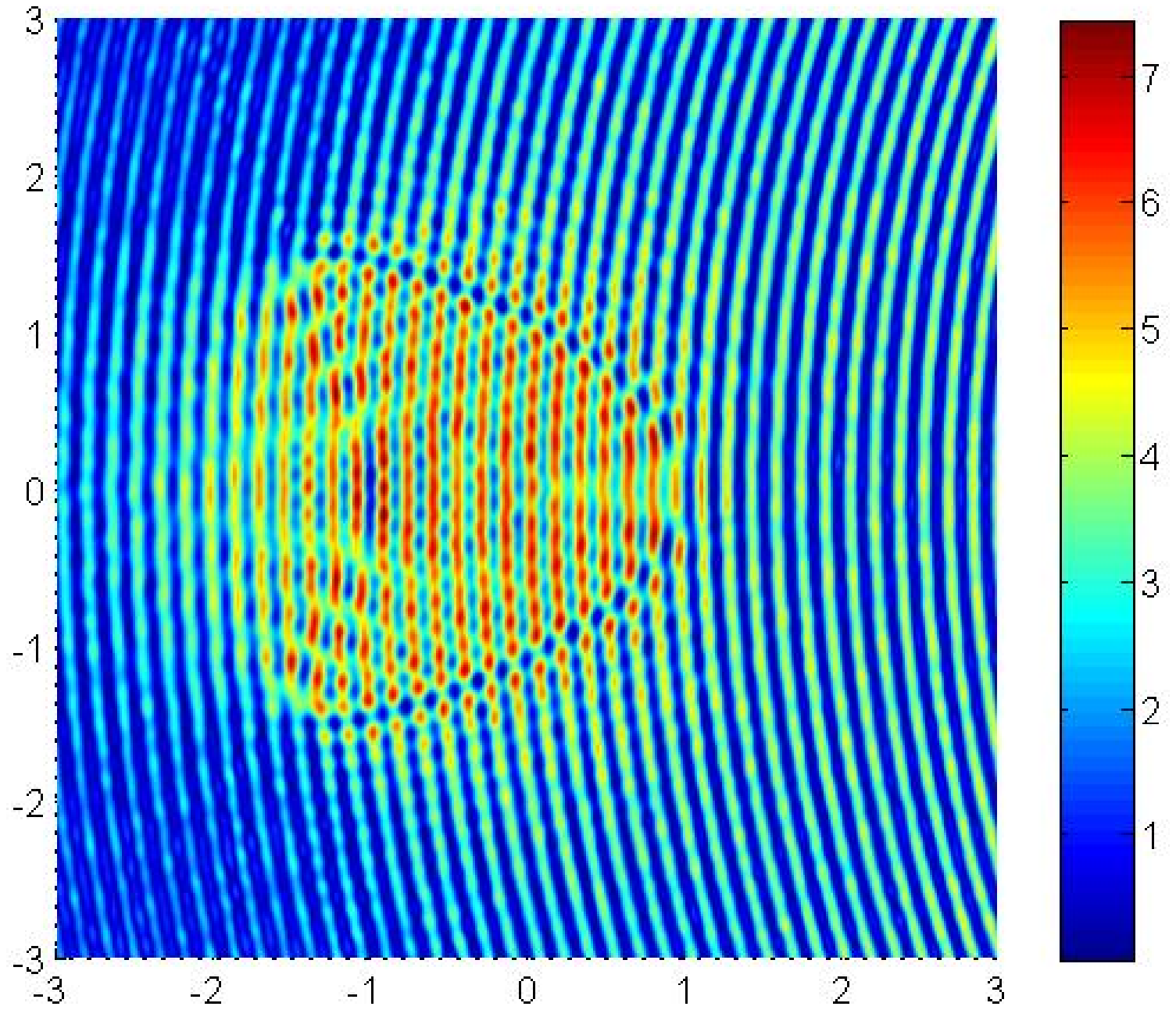}}
  \subfigure[\textbf{No noise, k=20}]{
    \includegraphics[width=1.5in]{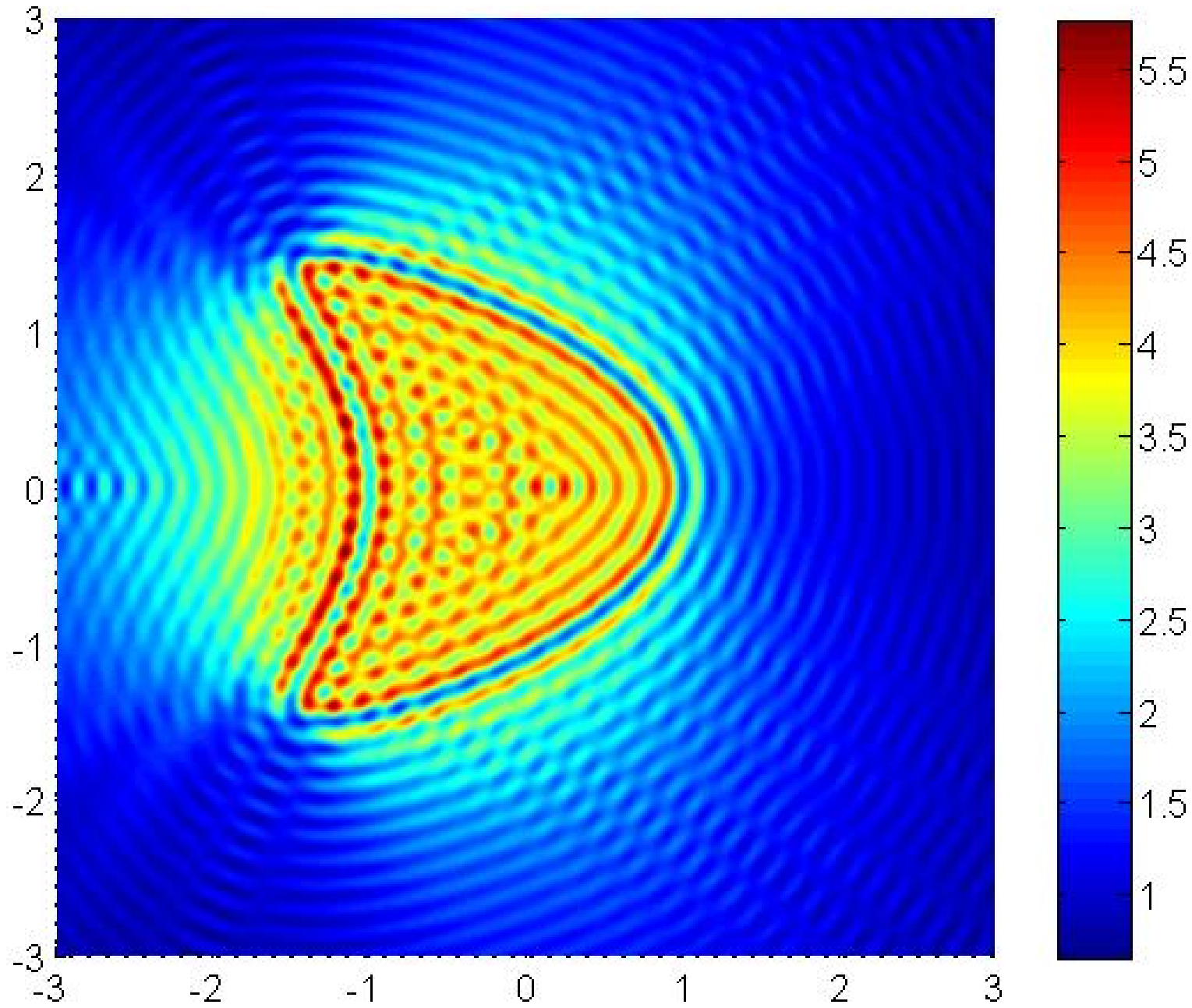}}
  \subfigure[\textbf{10\% noise, k=20}]{
    \includegraphics[width=1.5in]{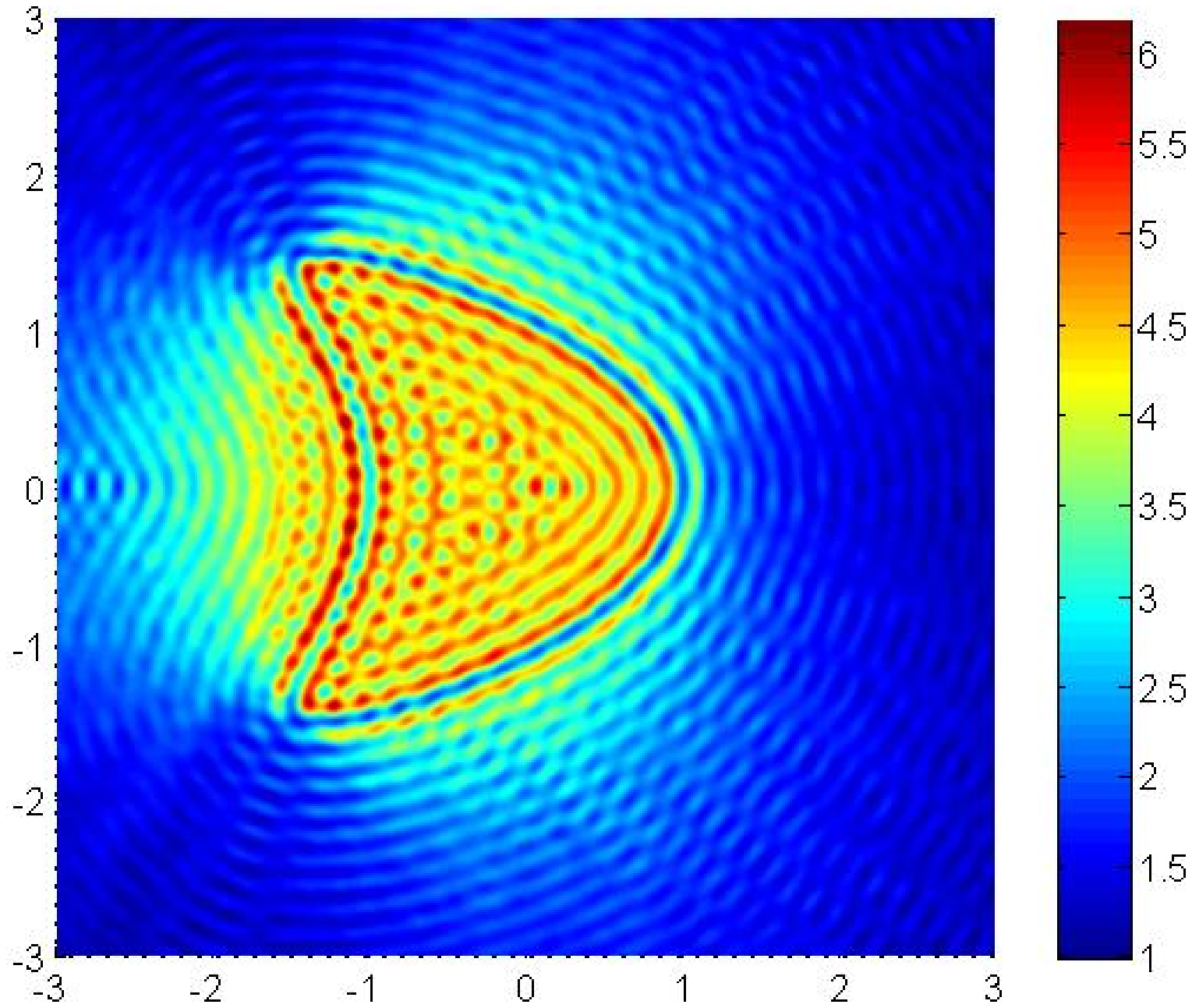}}
  \subfigure[\textbf{20\% noise, k=20}]{
    \includegraphics[width=1.5in]{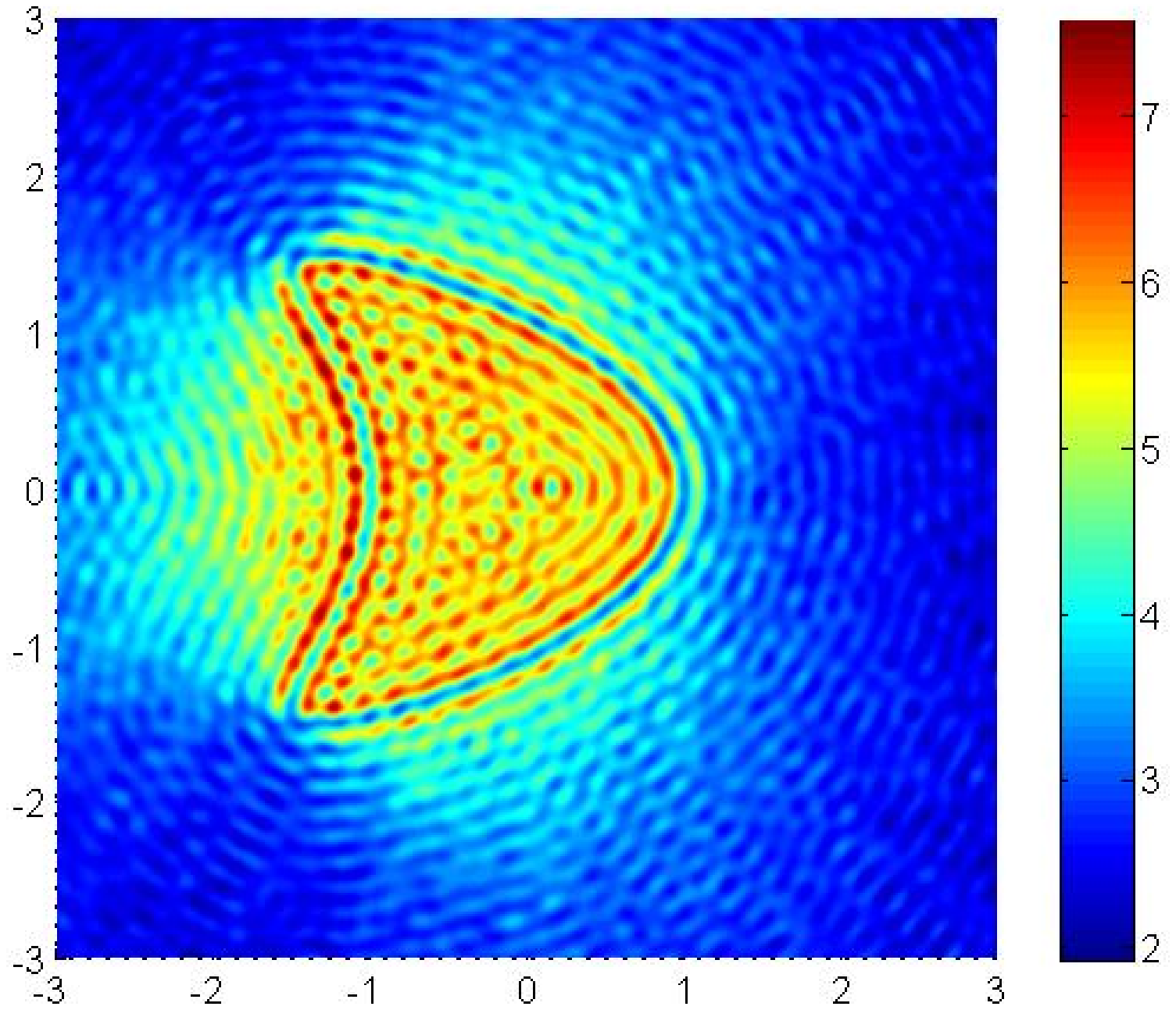}}
\caption{Imaging results of a kite-shaped, sound-hard obstacle given by Algorithm \ref{al1}
with phaseless data (top row) and
by the imaging algorithm with $I^A_F(z)$ in \cite{P10} with full data (bottom row), respectively.
}\label{fig3}
\end{figure}

\begin{figure}[htbp]
  \centering
  \subfigure[\textbf{Exact curve}]{
    \includegraphics[width=1.5in]{pic/example2/kite.eps}}
  \subfigure[\textbf{No noise, k=20}]{
    \includegraphics[width=1.5in]{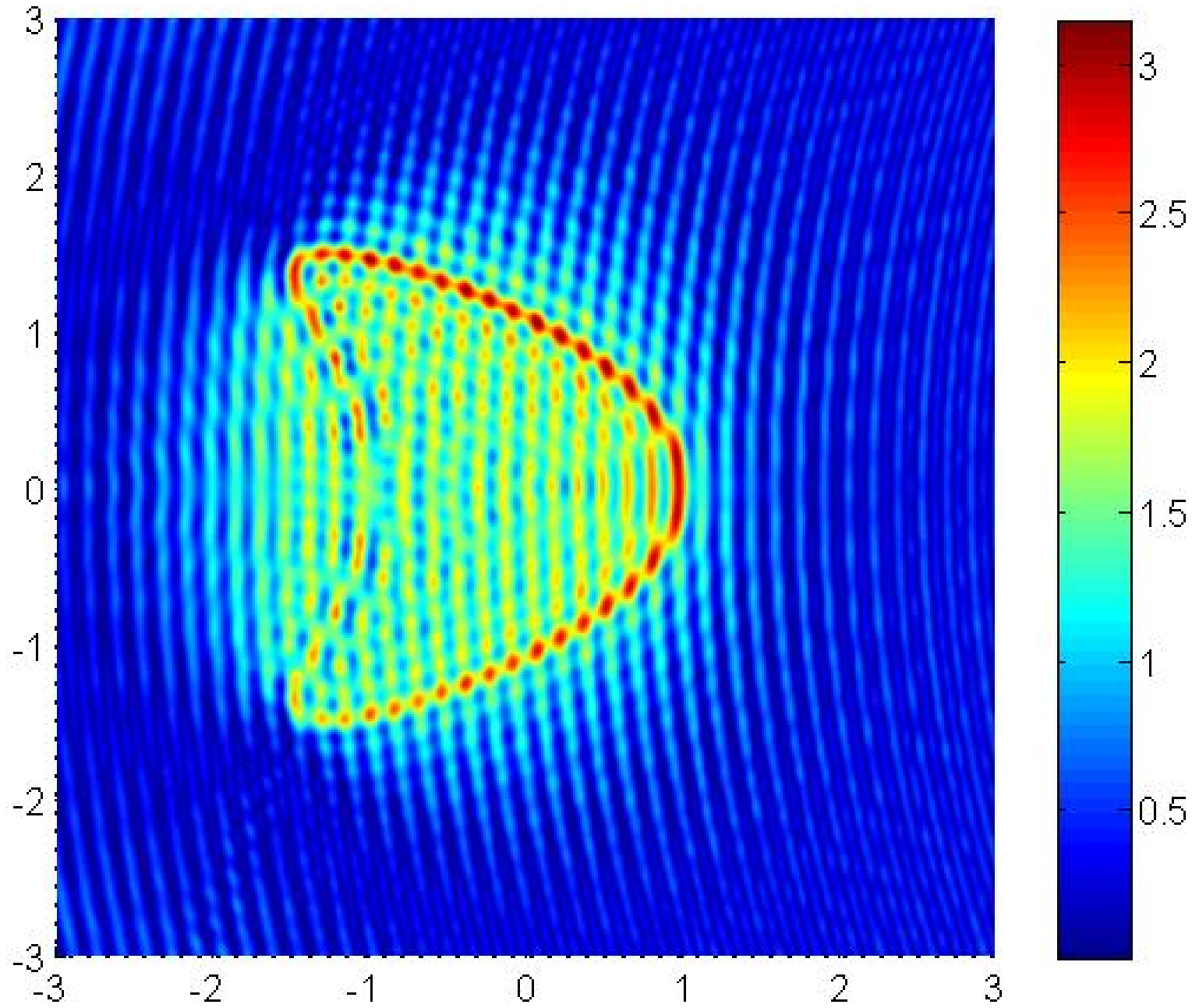}}
  \subfigure[\textbf{10\% noise, k=20}]{
    \includegraphics[width=1.5in]{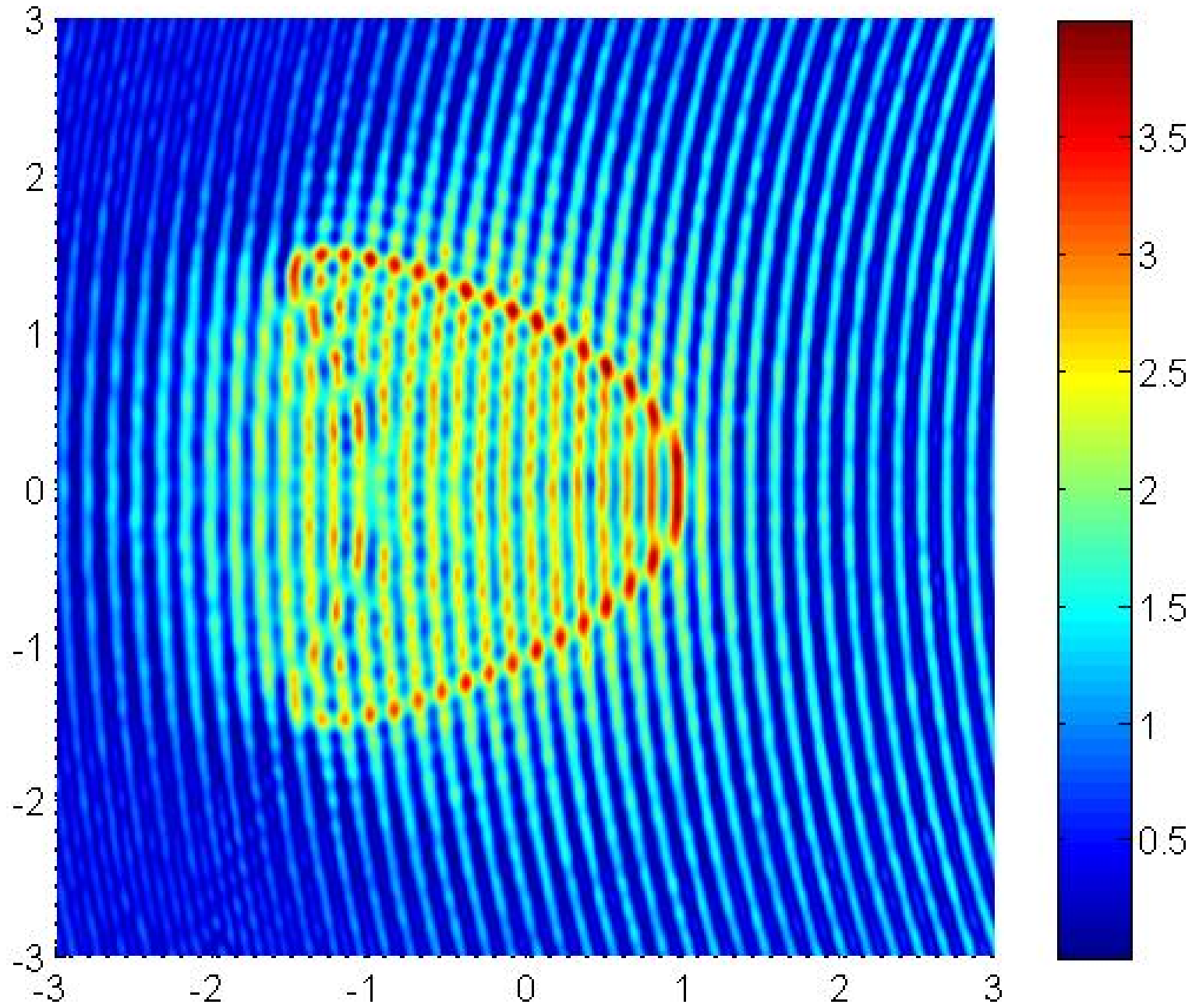}}
  \subfigure[\textbf{20\% noise, k=20}]{
    \includegraphics[width=1.5in]{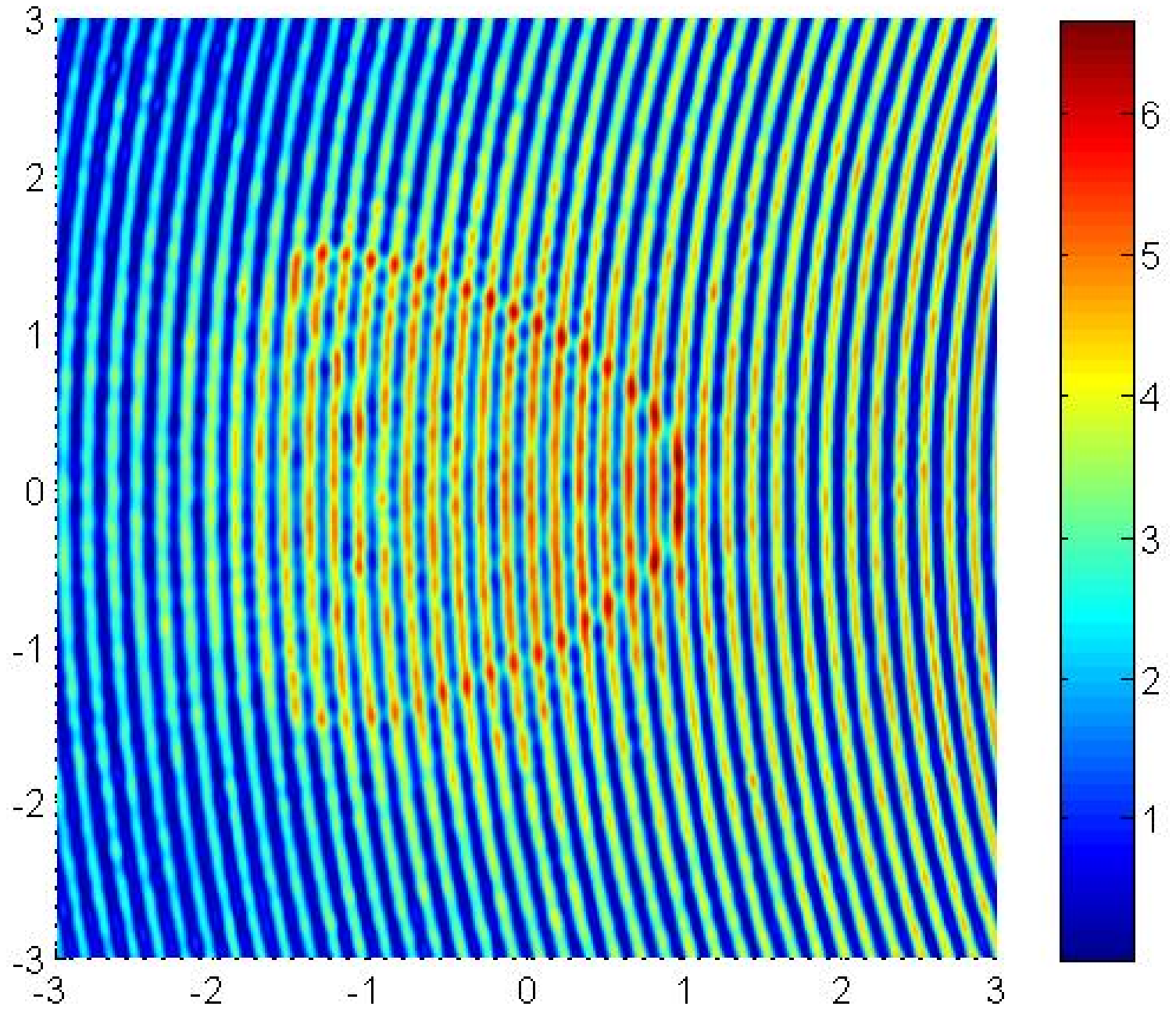}}
  \subfigure[\textbf{No noise, k=20}]{
    \includegraphics[width=1.5in]{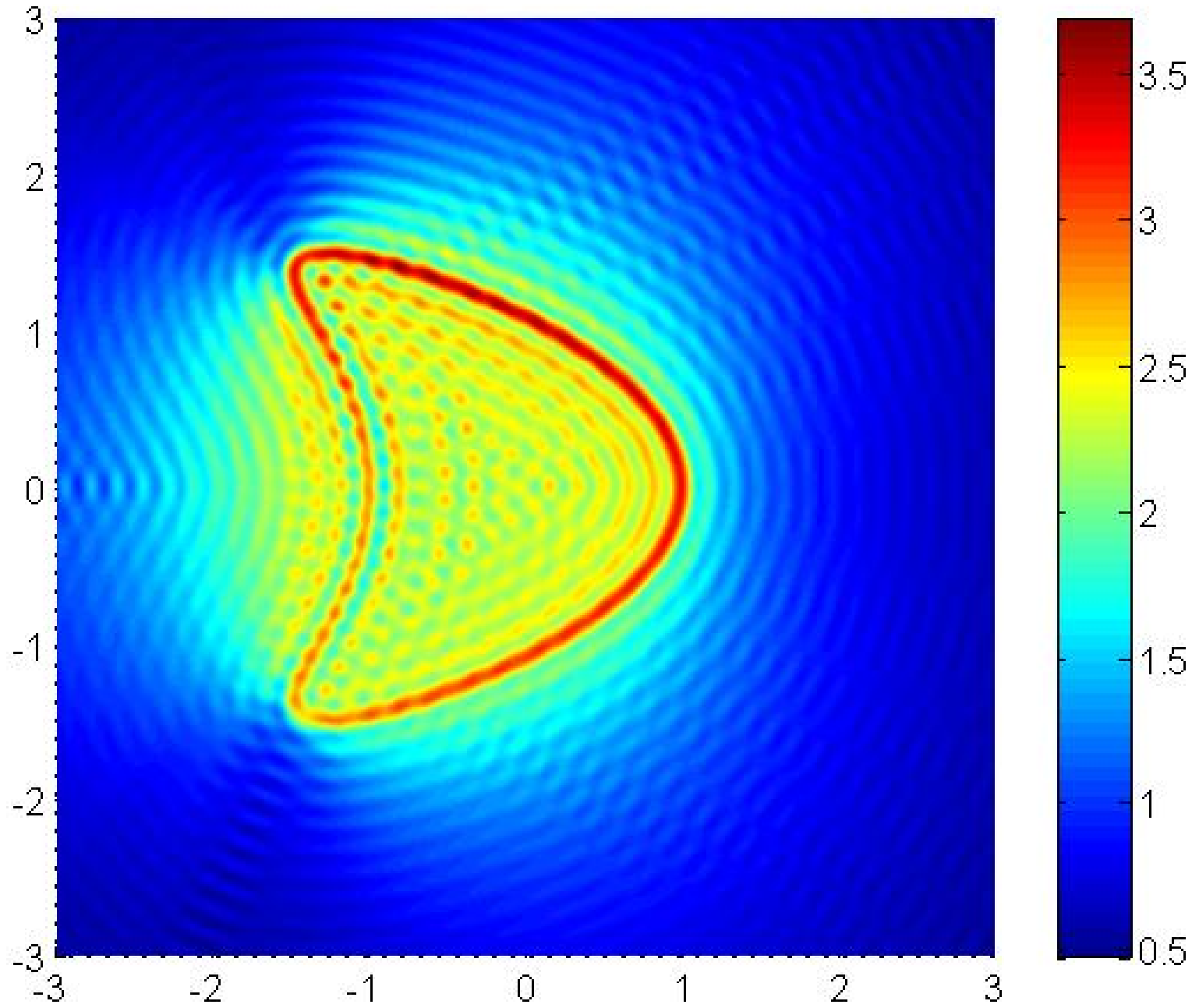}}
  \subfigure[\textbf{10\% noise, k=20}]{
    \includegraphics[width=1.5in]{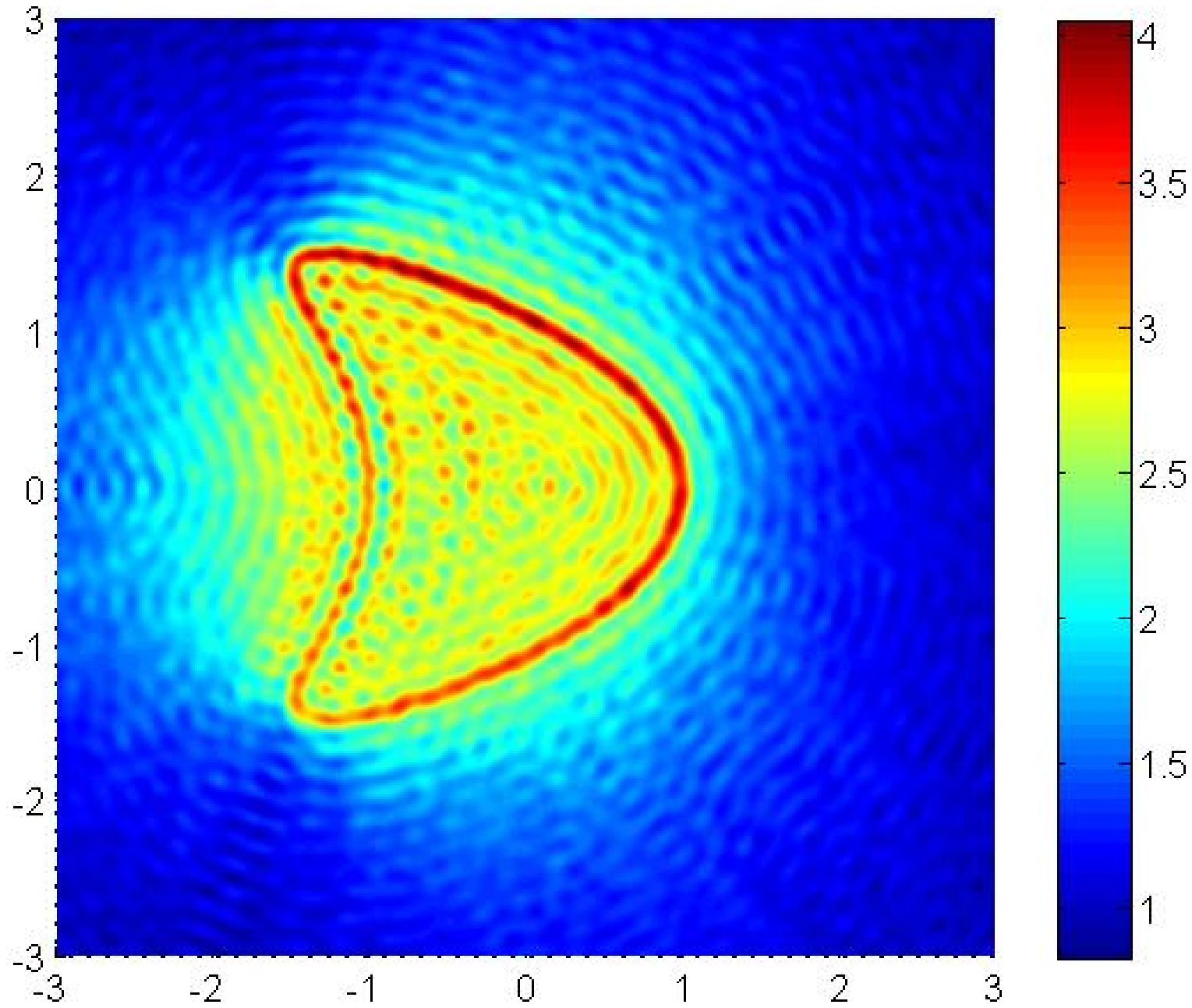}}
  \subfigure[\textbf{20\% noise, k=20}]{
    \includegraphics[width=1.5in]{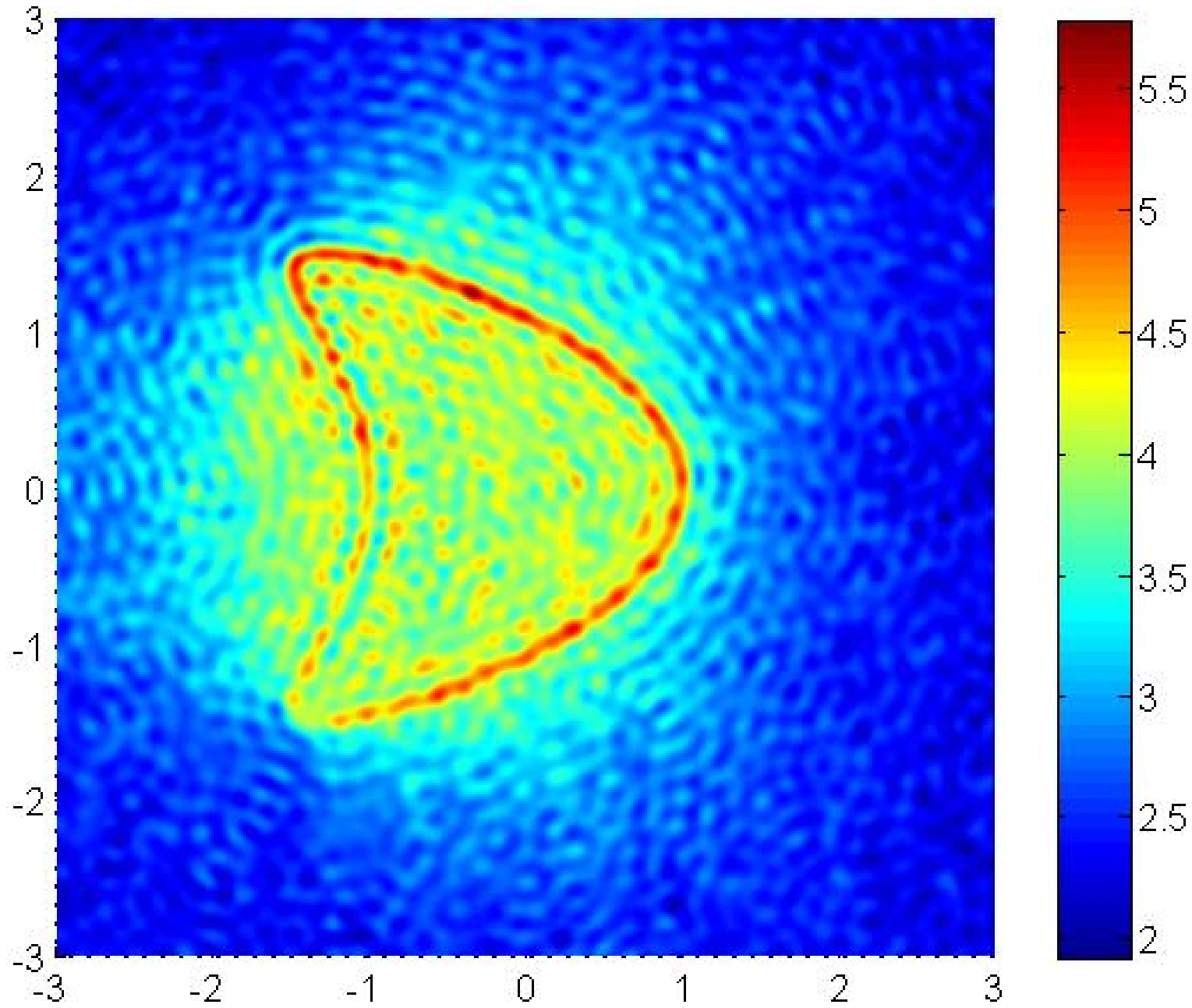}}
\caption{Imaging results of a kite-shaped, impedance obstacle with the impedance function $\rho(x(t))=2+0.5\sin(t),t\in[0,2\pi]$ given by Algorithm \ref{al1} with phaseless data (top row)
and by the imaging algorithm with $I^A_F(z)$ in \cite{P10} with full data (bottom row),
respectively.
}\label{fig4}
\end{figure}

\textbf{Example 3: Reconstruction of a penetrable obstacle.}

We consider the reconstruction of a rounded triangle-shaped, penetrable obstacle.
The sampling region is assumed to be $[-3,3]\times[-3,3]$.
We take $z_0=(9,9)^T$ for our algorithm. The wave number is chosen to be $k=20$.
Figure \ref{fig5} gives the exact curve, the imaging results of $I^A_{z_0}(z)$ and $I^A_F(z)$ from the
measured data without noise, with $5\%$ noise and with $10\%$ noise, respectively,
for the case when the refractive index $n(x)=4$ and the transmission constant $\la=1$.
Figure \ref{fig6} shows the exact curve, the imaging  results of $I^A_{z_0}(z)$ and $I^A_F(z)$ from the
measured data without noise, with $5\%$ noise and with $10\%$ noise, respectively,
for the case when the refractive index $n(x)=0.64$ and the transmission constant $\la=2$.
It is observed that the reconstructed results for penetrable obstacles are not as good as
those for impenetrable obstacles but are still satisfactory.

\begin{figure}[htbp]
  \centering
  \subfigure[\textbf{Exact curve}]{
    \includegraphics[width=1.5in]{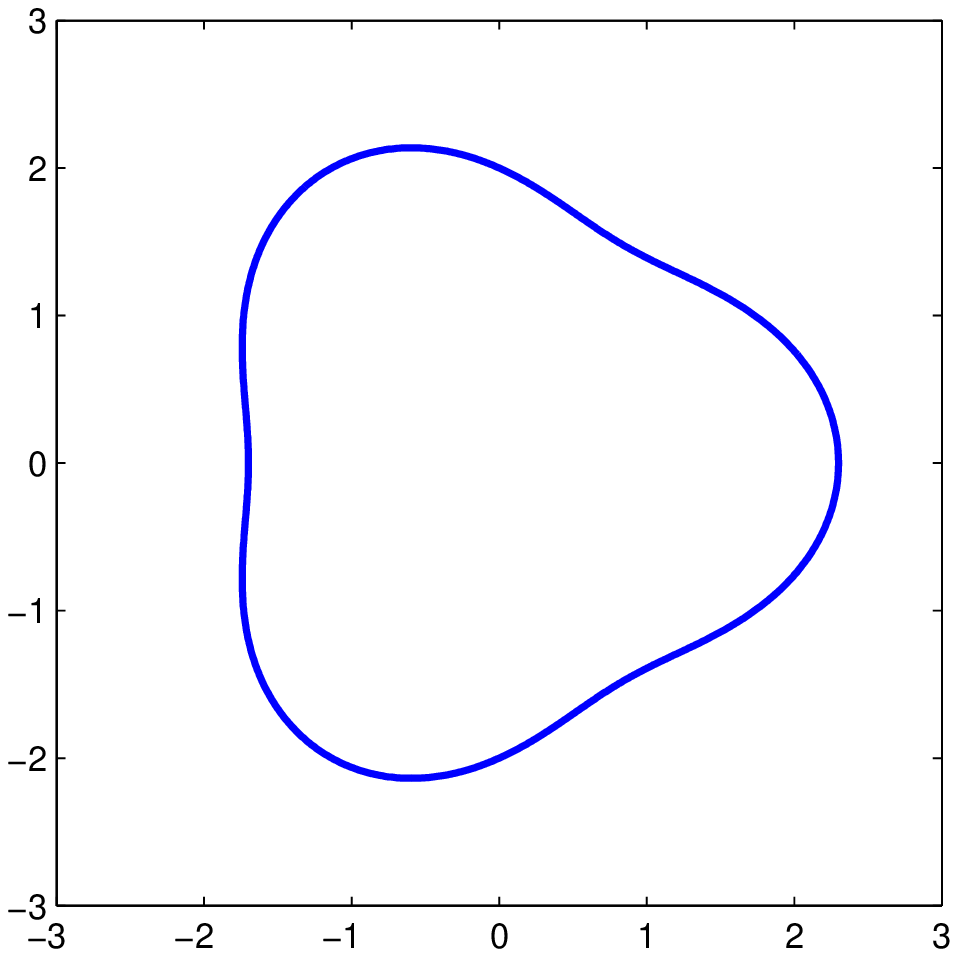}}
  \subfigure[\textbf{No noise, k=20}]{
    \includegraphics[width=1.5in]{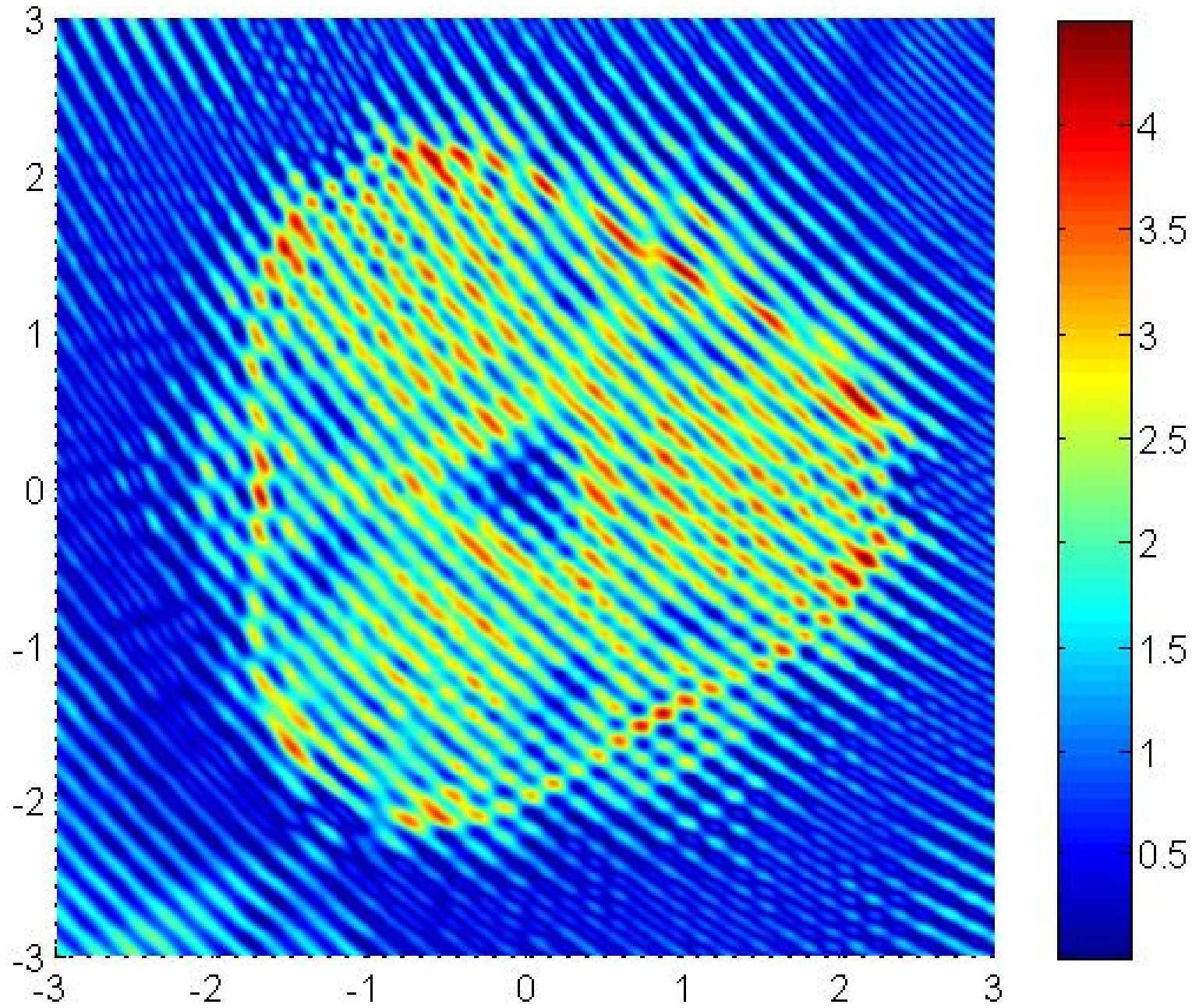}}
  \subfigure[\textbf{5\% noise, k=20}]{
    \includegraphics[width=1.5in]{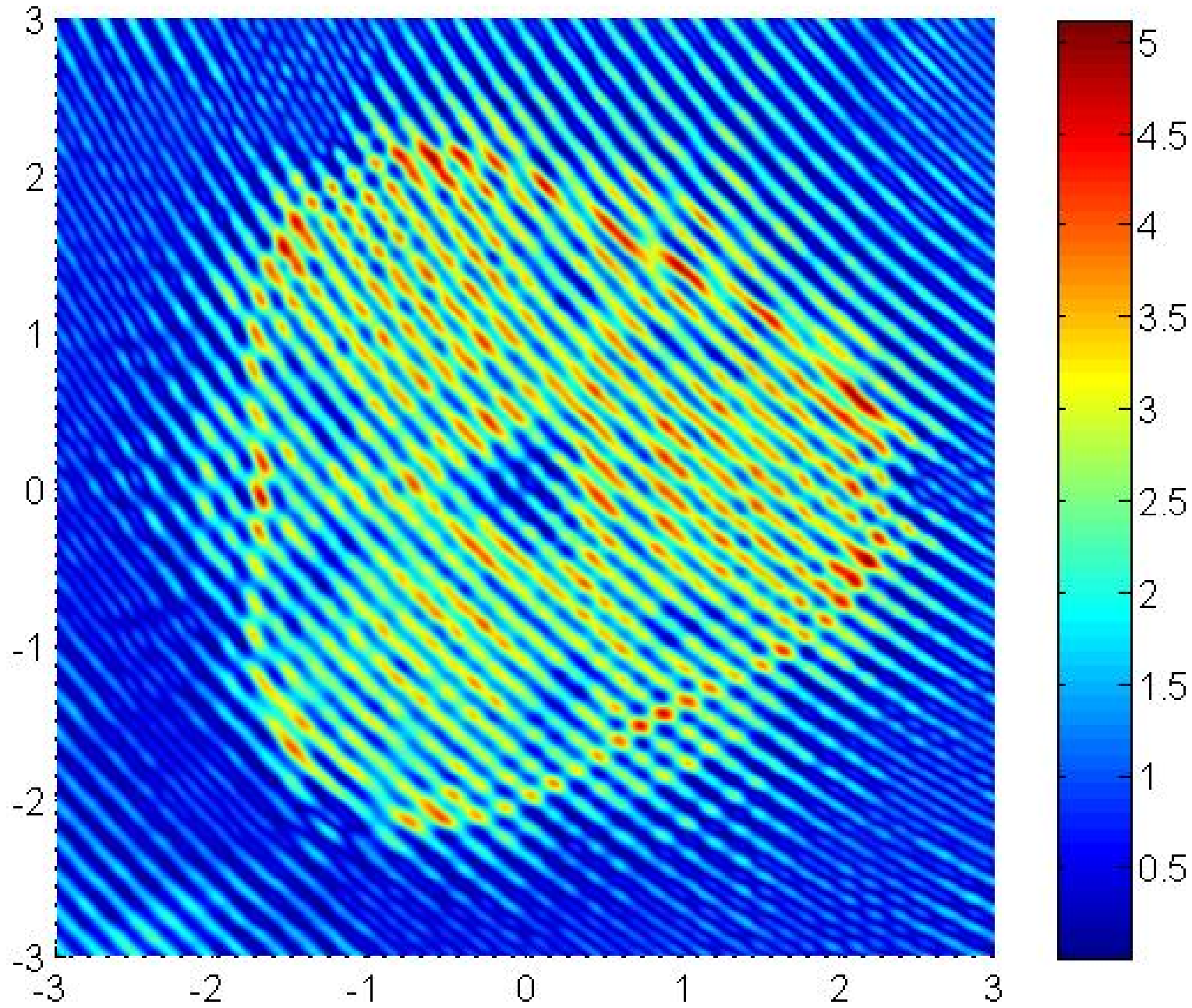}}
  \subfigure[\textbf{10\% noise, k=20}]{
    \includegraphics[width=1.5in]{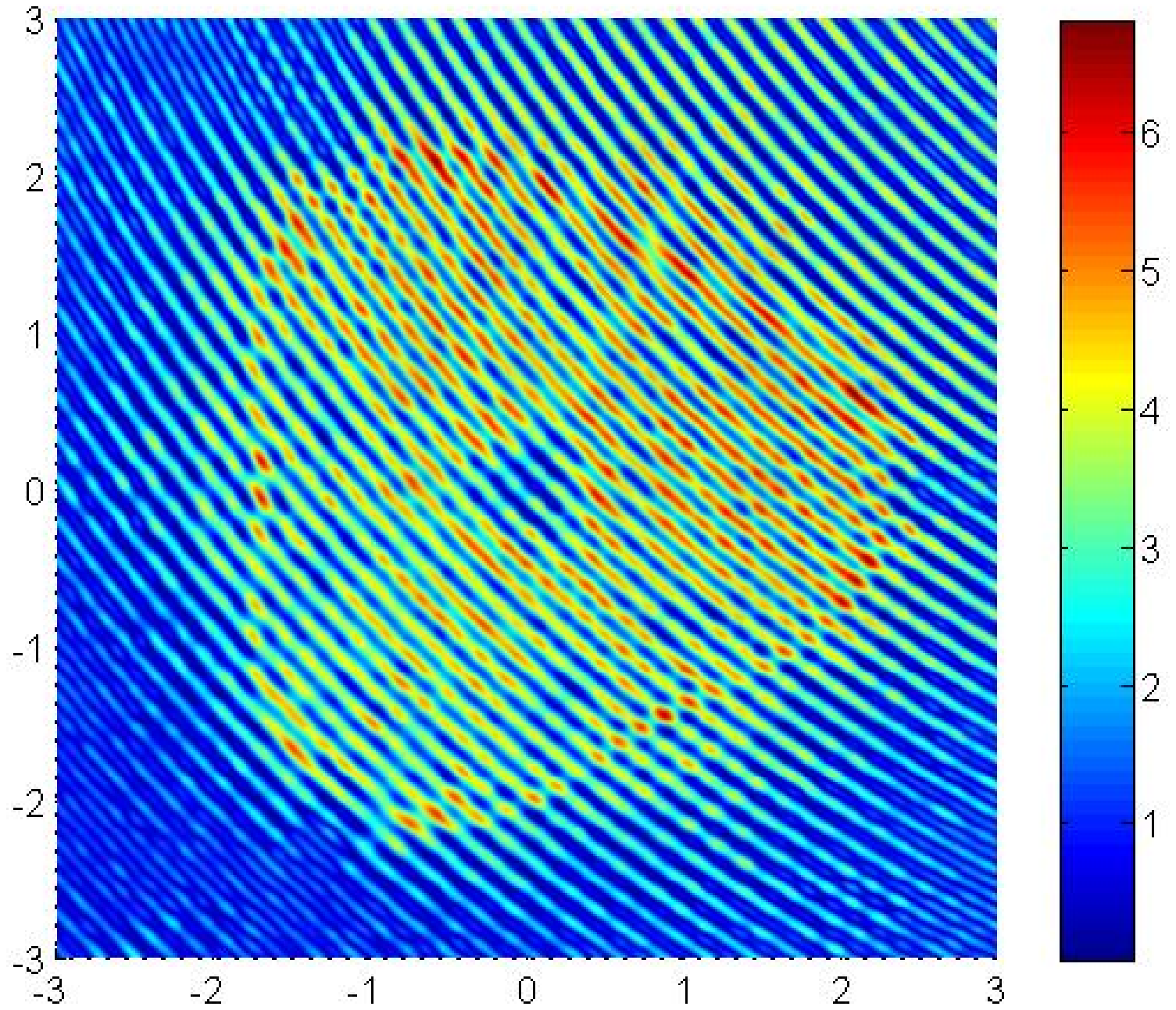}}
  \subfigure[\textbf{No noise, k=20}]{
    \includegraphics[width=1.5in]{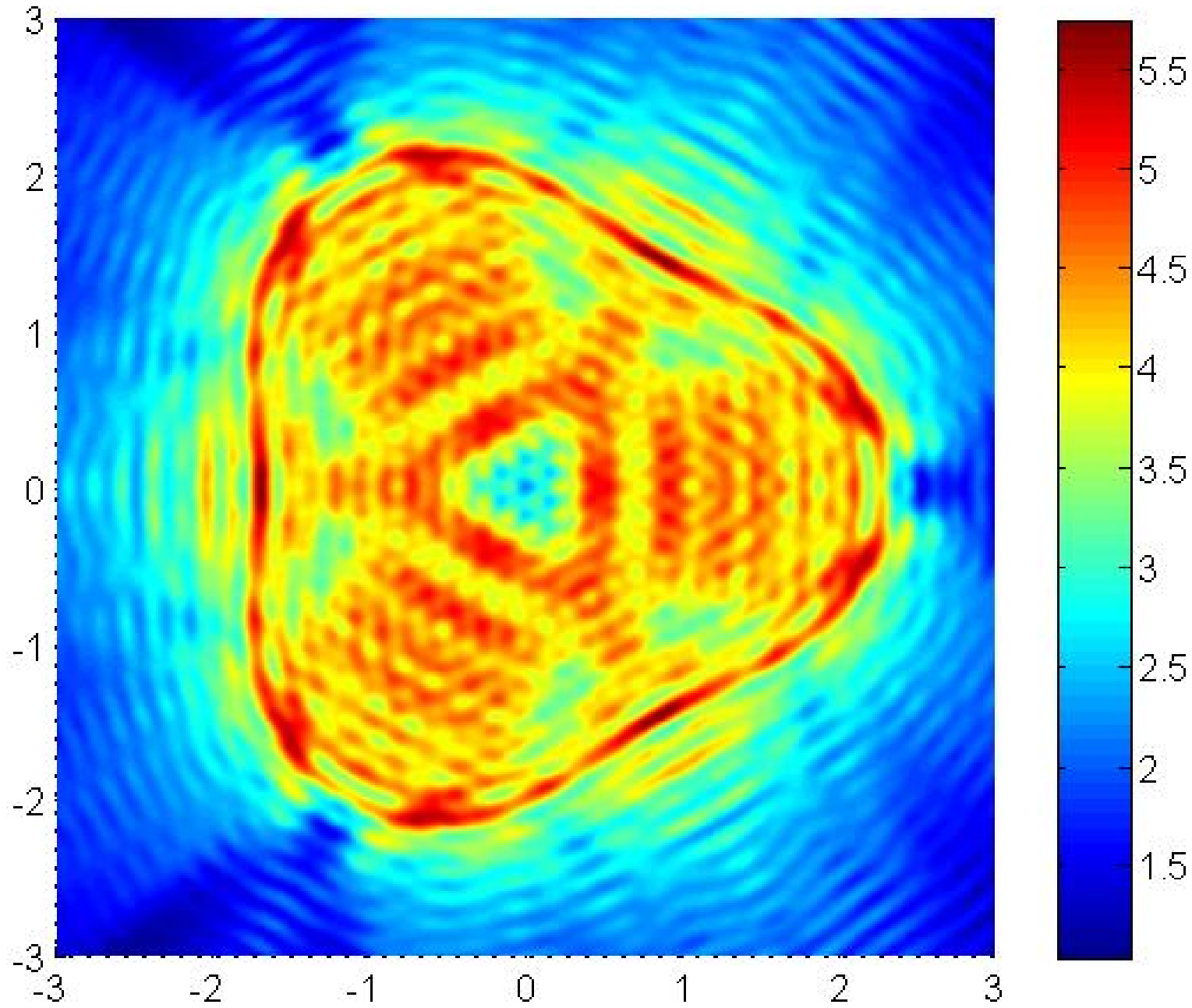}}
  \subfigure[\textbf{5\% noise, k=20}]{
    \includegraphics[width=1.5in]{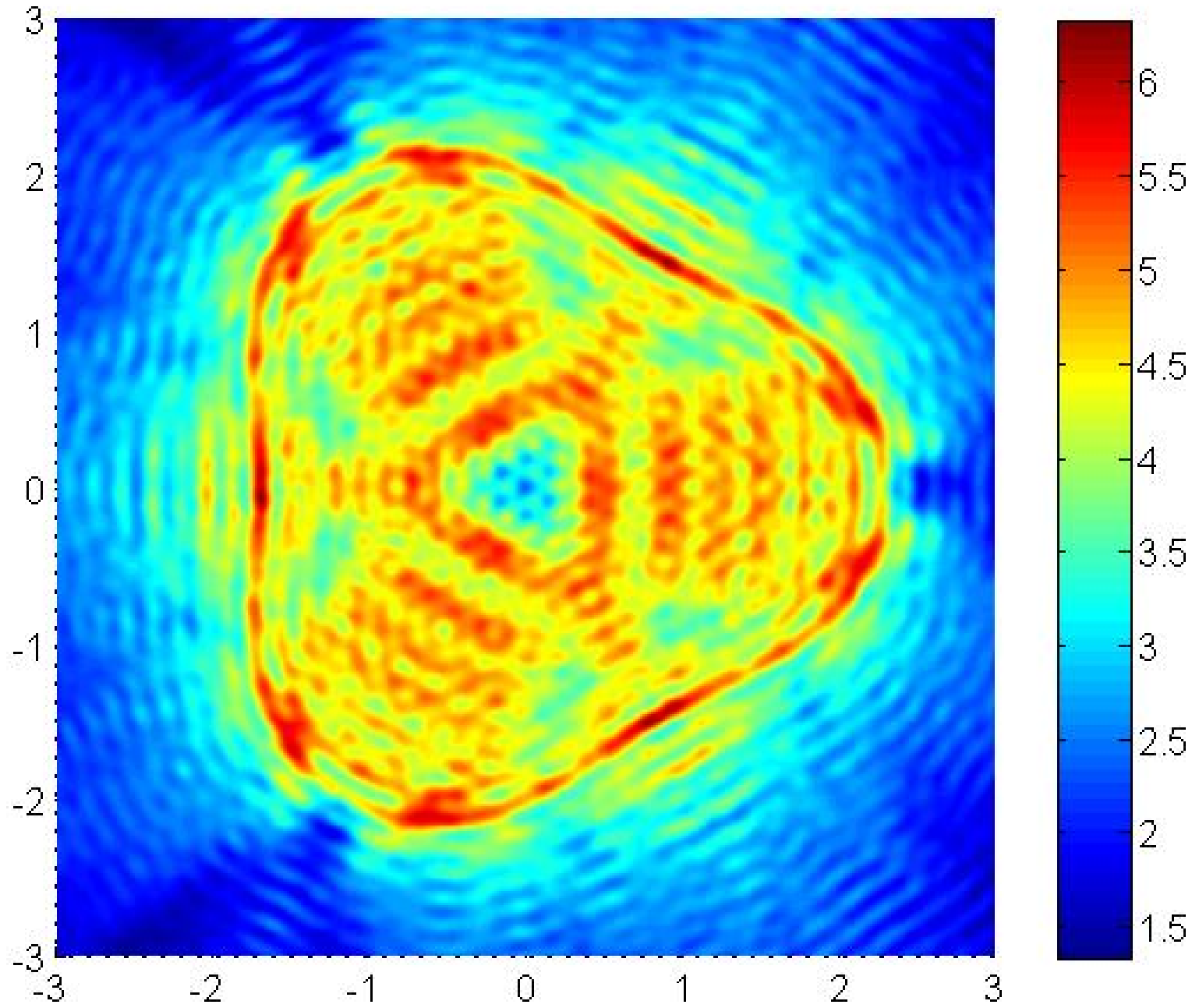}}
  \subfigure[\textbf{10\% noise, k=20}]{
    \includegraphics[width=1.5in]{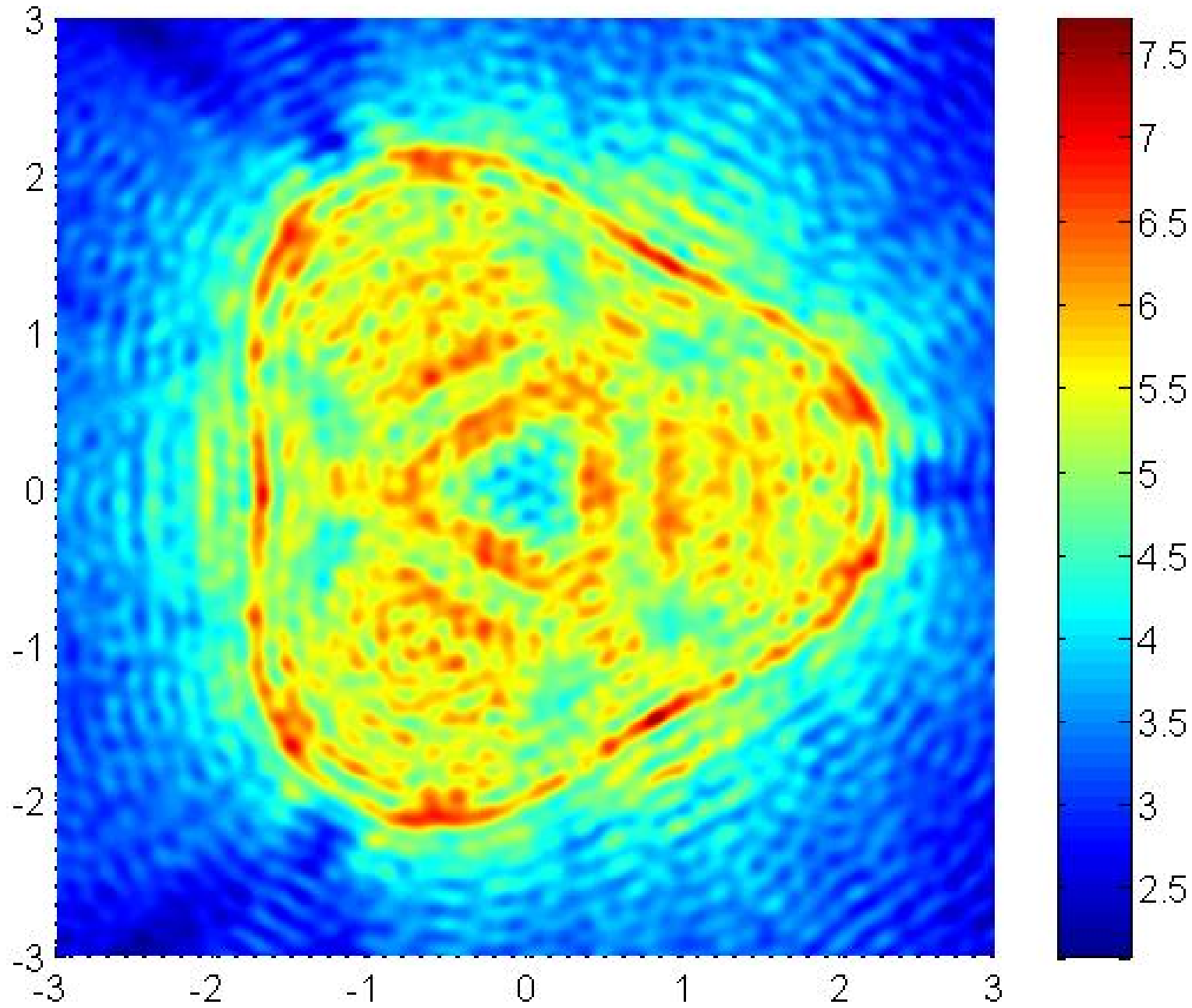}}
\caption{Imaging results of a rounded triangle-shaped, penetrable obstacle with the refractive index $n(x)=4$
and the transmission constant $\la=1$ given by Algorithm \ref{al1} with phaseless data (top row) and
by the imaging algorithm with $I^A_F(z)$ in \cite{P10} with full data (bottom row), respectively.
}\label{fig5}
\end{figure}

\begin{figure}[htbp]
  \centering
  \subfigure[\textbf{Exact curve}]{
    \includegraphics[width=1.5in]{pic/example3/triangle.eps}}
  \subfigure[\textbf{No noise, k=20}]{
    \includegraphics[width=1.5in]{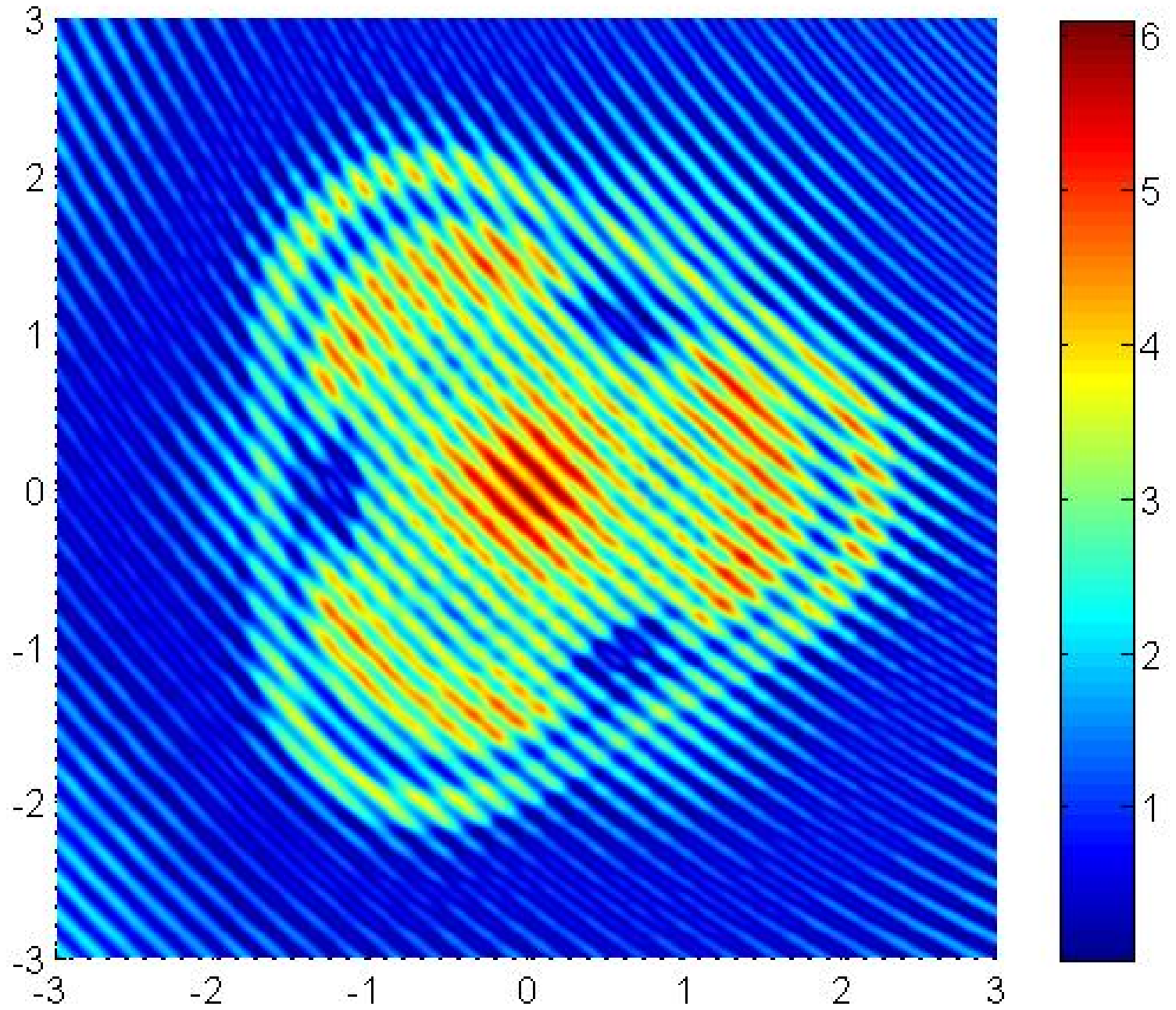}}
  \subfigure[\textbf{5\% noise, k=20}]{
    \includegraphics[width=1.5in]{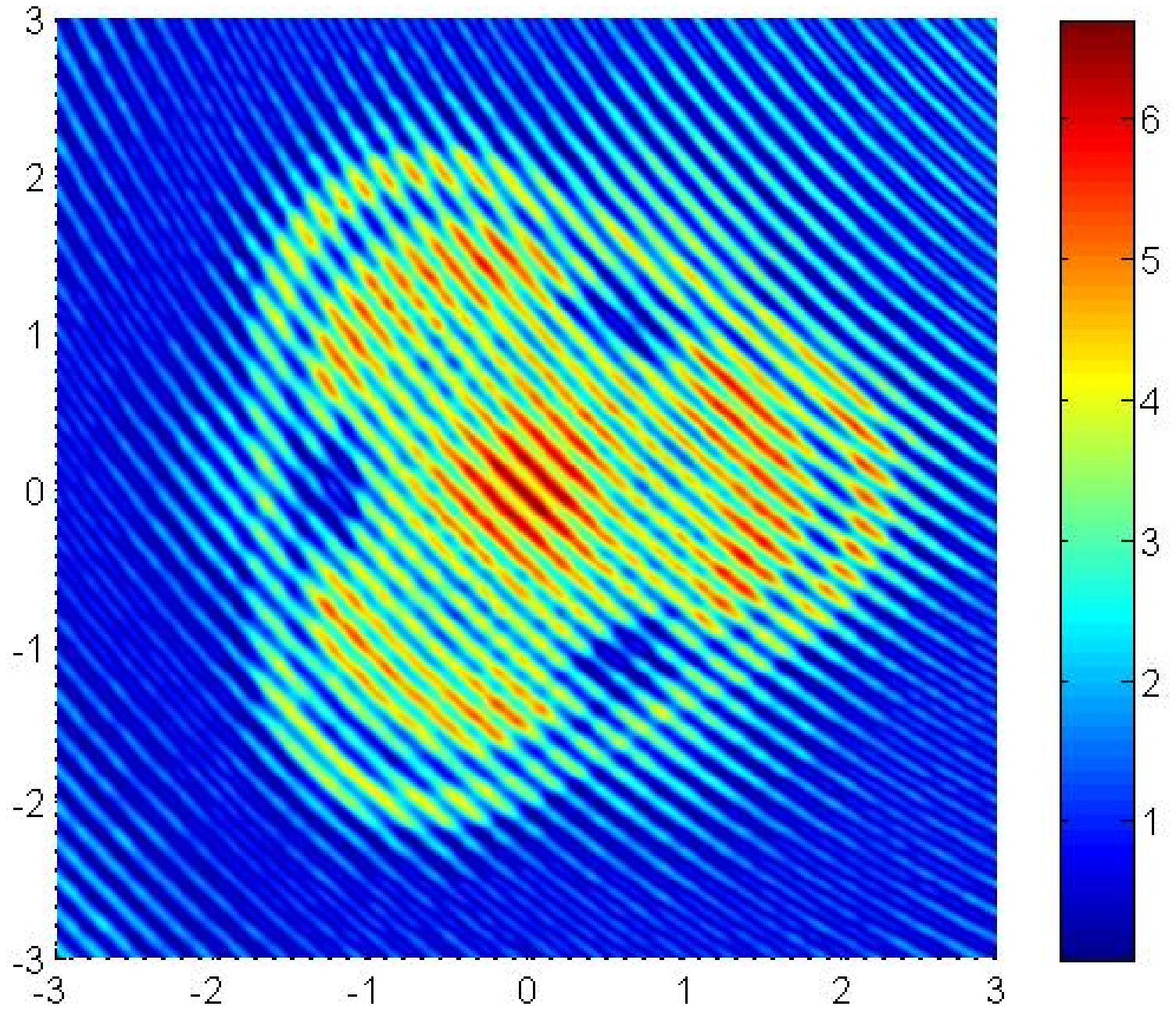}}
  \subfigure[\textbf{10\% noise, k=20}]{
    \includegraphics[width=1.5in]{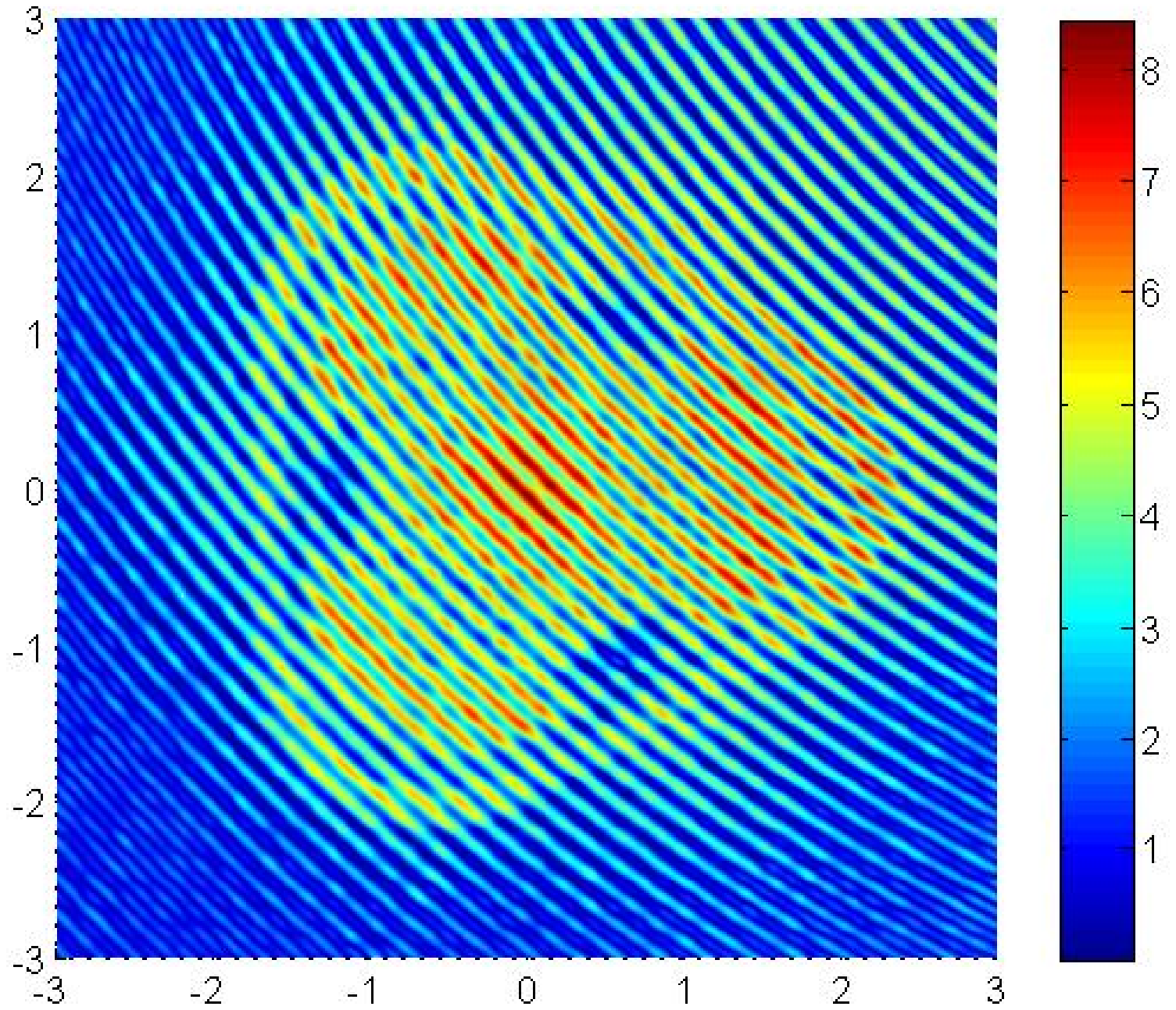}}
  \subfigure[\textbf{No noise, k=20}]{
    \includegraphics[width=1.5in]{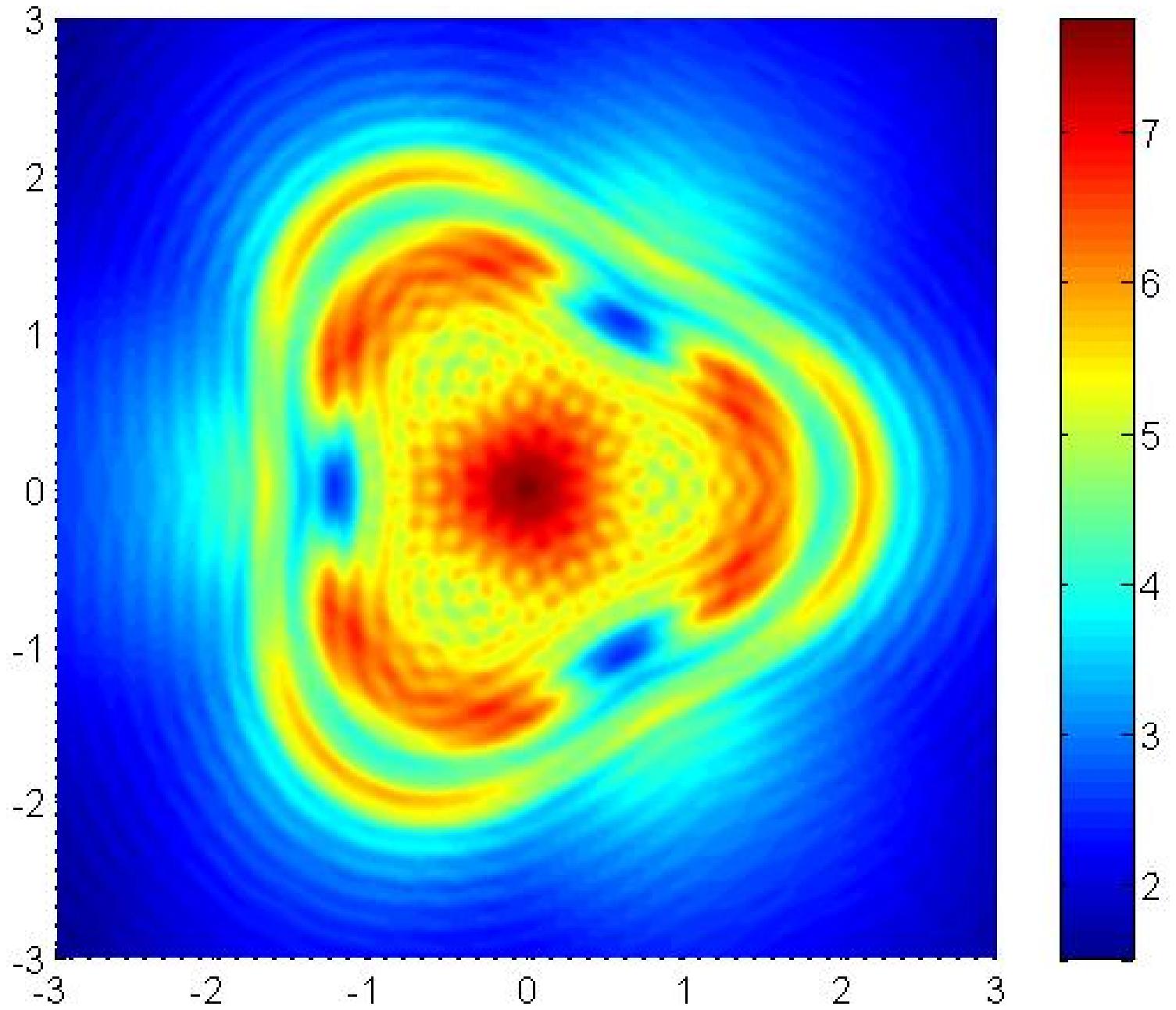}}
  \subfigure[\textbf{5\% noise, k=20}]{
    \includegraphics[width=1.5in]{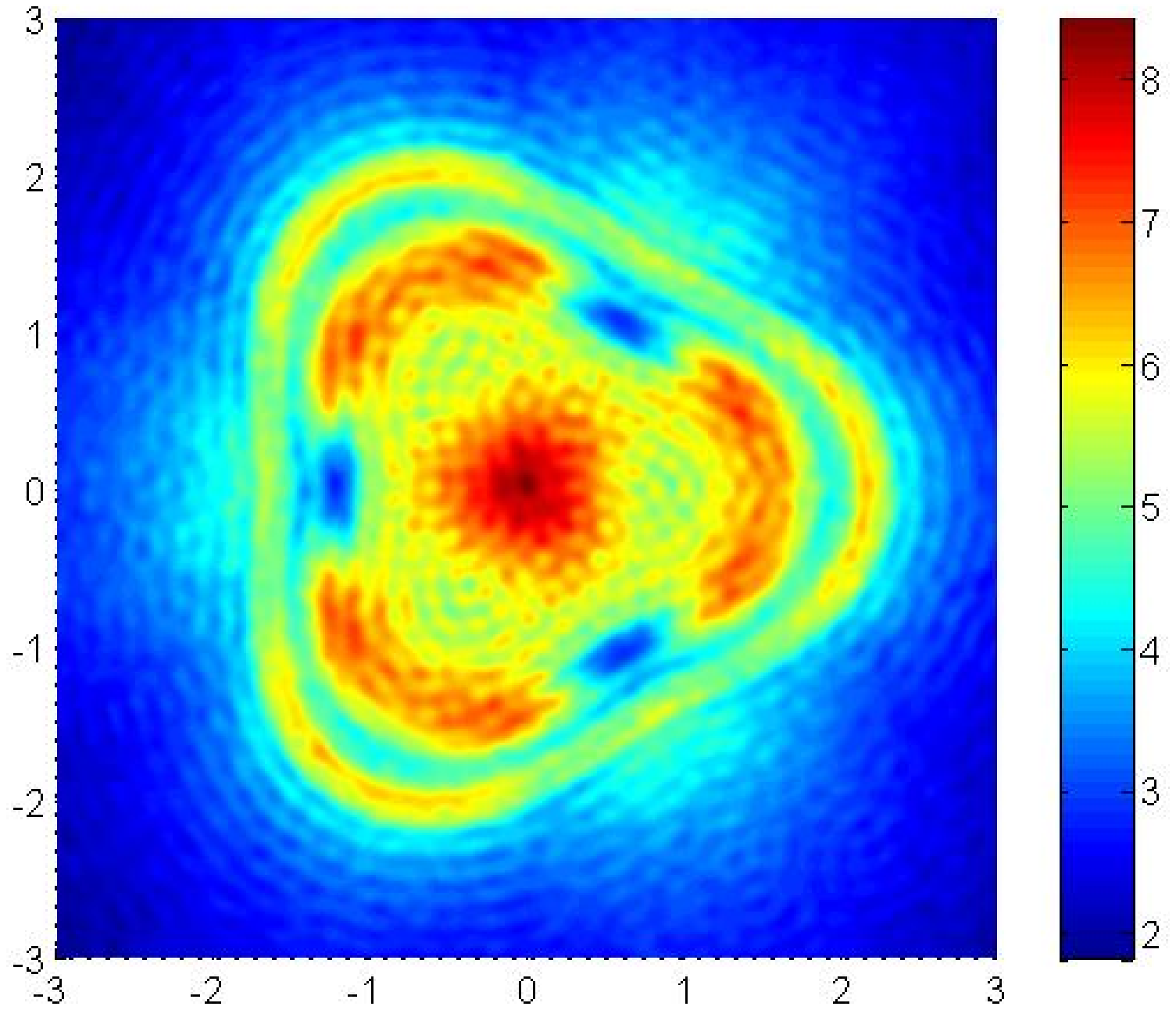}}
  \subfigure[\textbf{10\% noise, k=20}]{
    \includegraphics[width=1.5in]{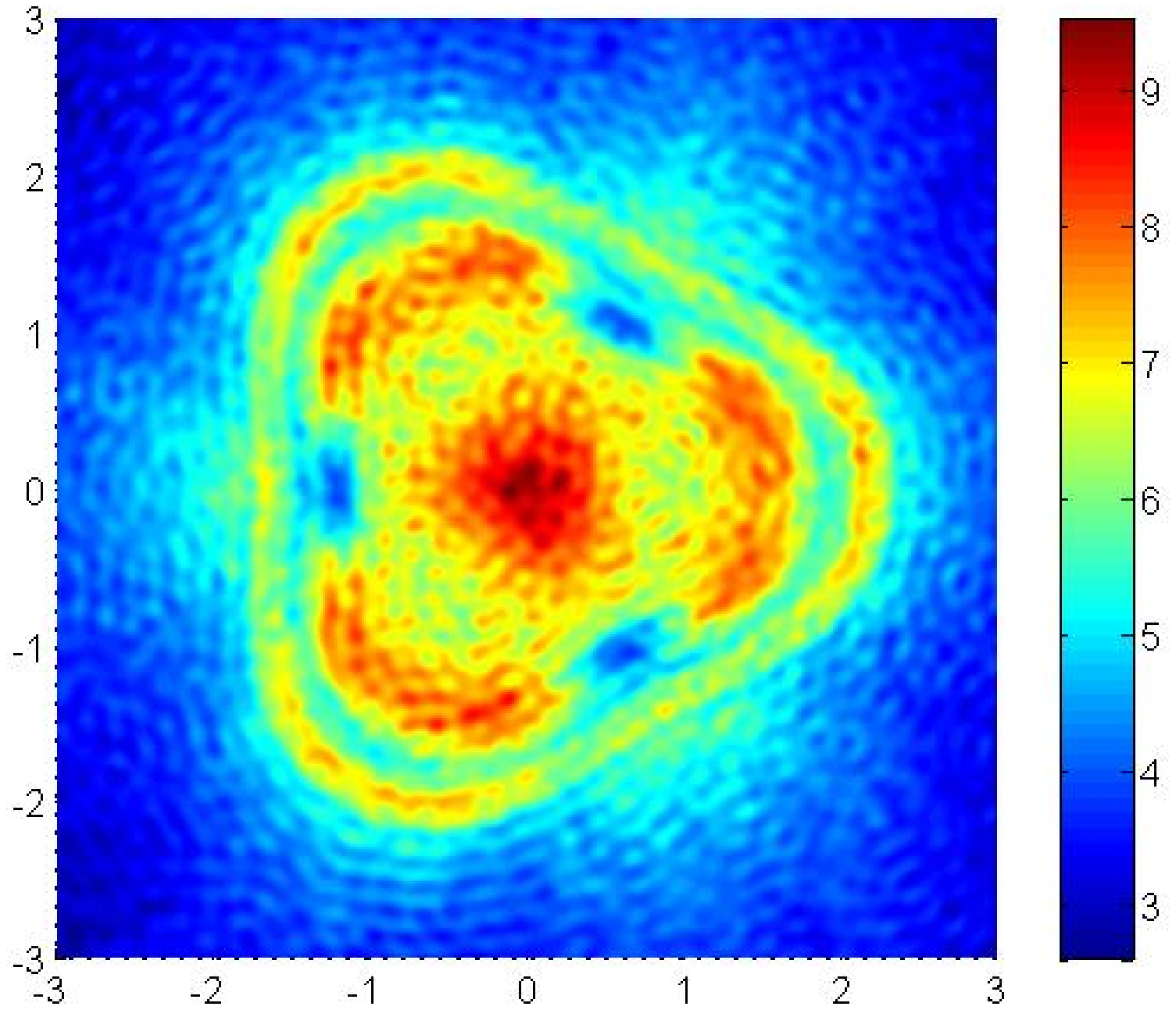}}
\caption{Imaging results of a rounded triangle-shaped, penetrable obstacle with the refractive
index $n(x)=0.64$ and the transmission constant $\la=2$ obtained by Algorithm \ref{al1} with phaseless
data (top row) and by the imaging algorithm with $I^A_F(z)$ in \cite{P10} with full data (bottom row),
respectively.
}\label{fig6}
\end{figure}

\textbf{Example 4: Reconstruction of two obstacles with different boundary conditions.}

This example considers the imaging of two obstacles $D_1$ and $D_2$ with different boundary conditions.
We study the influence of different wave numbers on the imaging results.
We first consider the case where $D_1$ is a rounded triangle-shaped, sound-soft obstacle and $D_2$ is
a circle-shaped, penetrable obstacle of radius $r=2$.
The size of the two obstacles $D_1$ and $D_2$ is comparable. The medium in $D_2$ is characterized by
the refractive index $n(x)=0.25$, and the transmission constant on the boundary $\pa D_2$ is $\la=0.5$
(see Figure \ref{fig7-1}). The sampling region is chosen to be $[-5,5]\times[-5,5]$.
We take $z_0=(-12,0)^T$ for our imaging algorithm.
Figures \ref{fig7}, \ref{fig11} and \ref{fig12} present the imaging results of $I^A_{z_0}(z)$ and $I^A_F(z)$
with the wave number $k=5,10,20$, respectively, from the exact, $5\%$ noisy and $10\%$ noisy measured data.

Next we consider the case where the size of the two obstacles is incompatible.
$D_1$ is a very small, circle-shaped, sound-soft obstacle of radius $r=0.1$ and $D_2$ is a much larger,
rounded square-shaped, impedance obstacle with the impedance function $\rho(x)=5$ (see Figure \ref{fig8-1}).
The searching region is chosen to be $[-4,4]\times[-4,4]$. We choose $z_0=(13,0)^T$ for our imaging algorithm.
Figures \ref{fig8}, \ref{fig13} and \ref{fig14} present the imaging results of $I^A_{z_0}(z)$ and $I^A_F(z)$
with the wave number $k=5,10,20$, respectively, from the exact, $5\%$ noisy and $10\%$ noisy measured data.

It is observed that in both cases the two obstacles can be reconstructed accurately as long as sufficiently
high-frequency data are used.

\begin{figure}[htbp]
  \centering
  \subfigure[\textbf{Exact curves}]{\label{fig7-1}
    \includegraphics[width=1.5in]{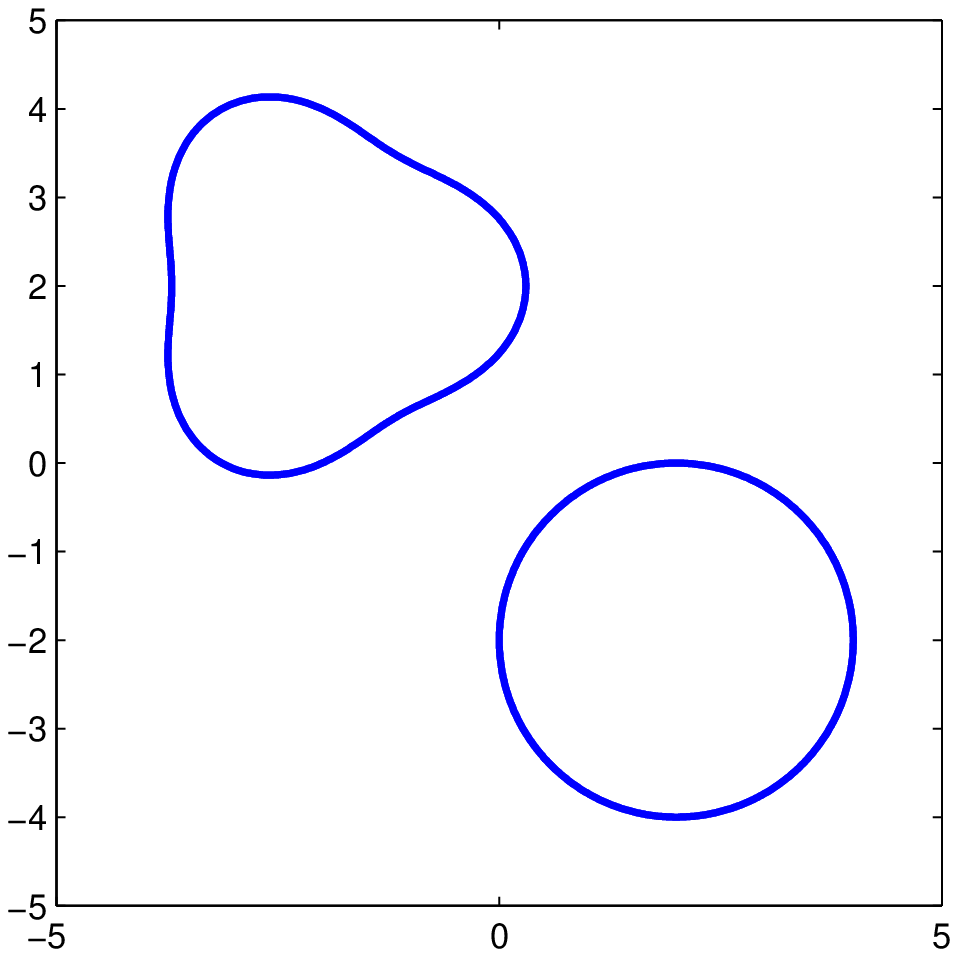}}
  \subfigure[\textbf{No noise, k=5}]{
    \includegraphics[width=1.5in]{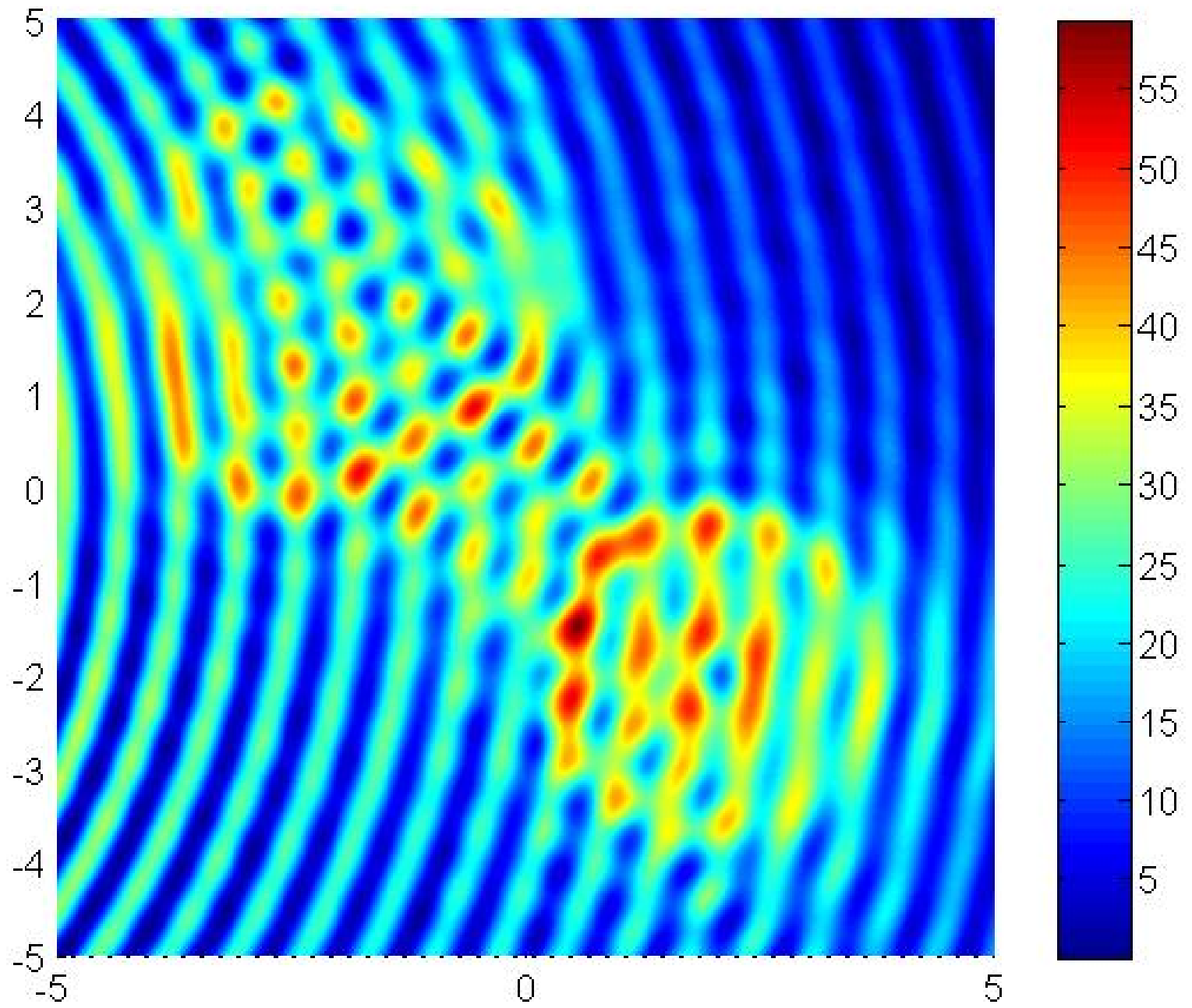}}
  \subfigure[\textbf{5\% noise, k=5}]{
    \includegraphics[width=1.5in]{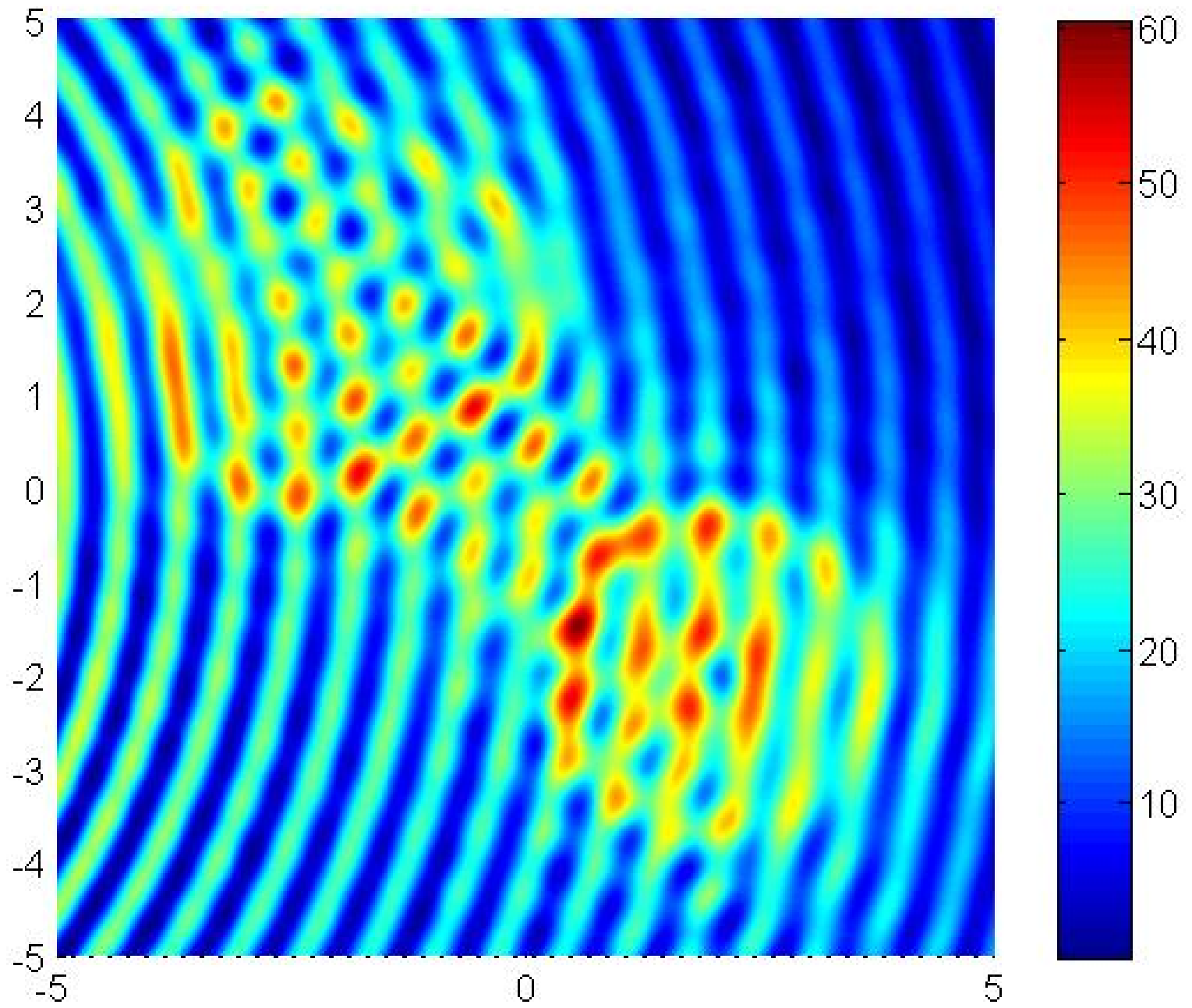}}
  \subfigure[\textbf{10\% noise, k=5}]{
    \includegraphics[width=1.5in]{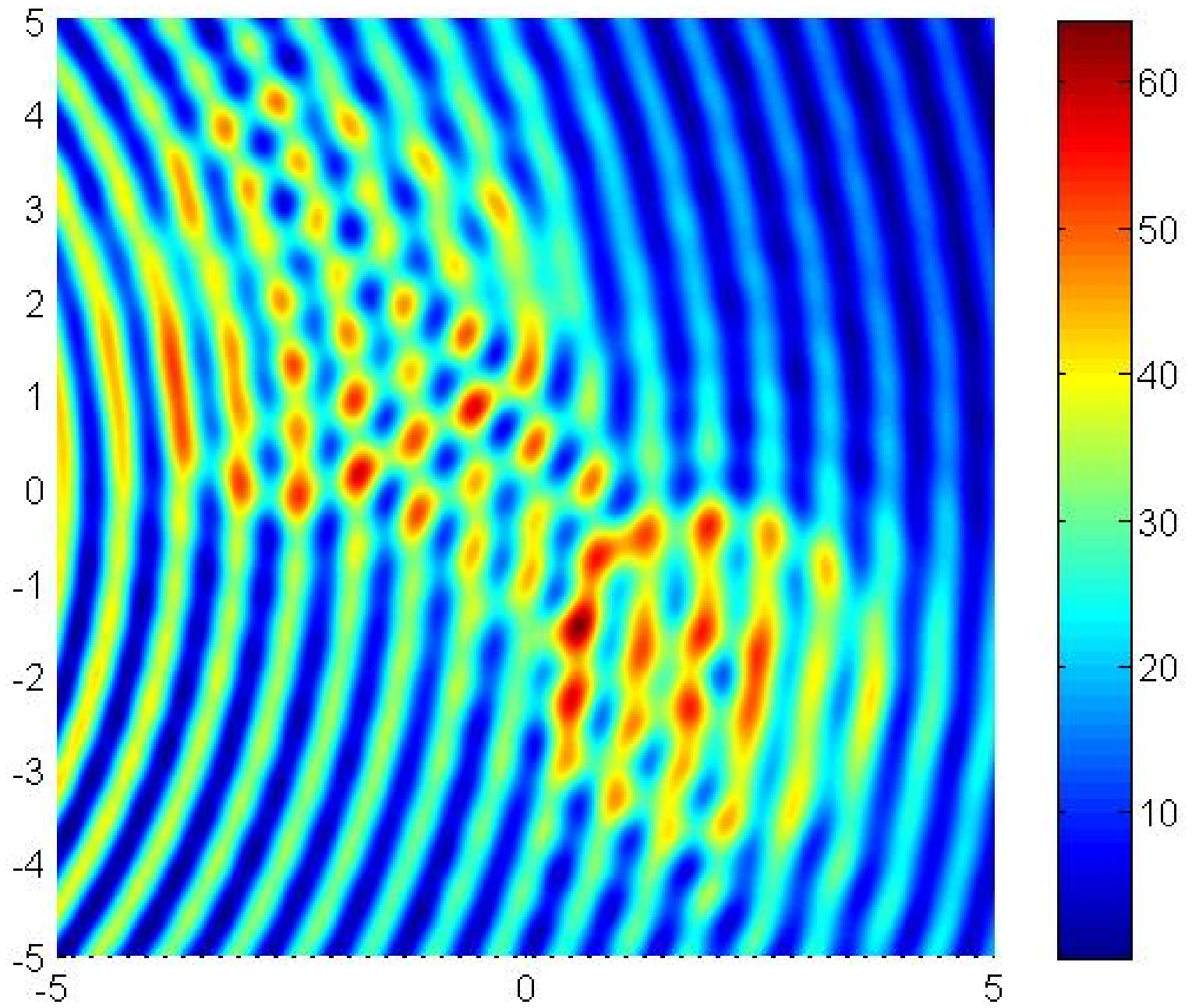}}
  \subfigure[\textbf{No noise, k=5}]{
    \includegraphics[width=1.5in]{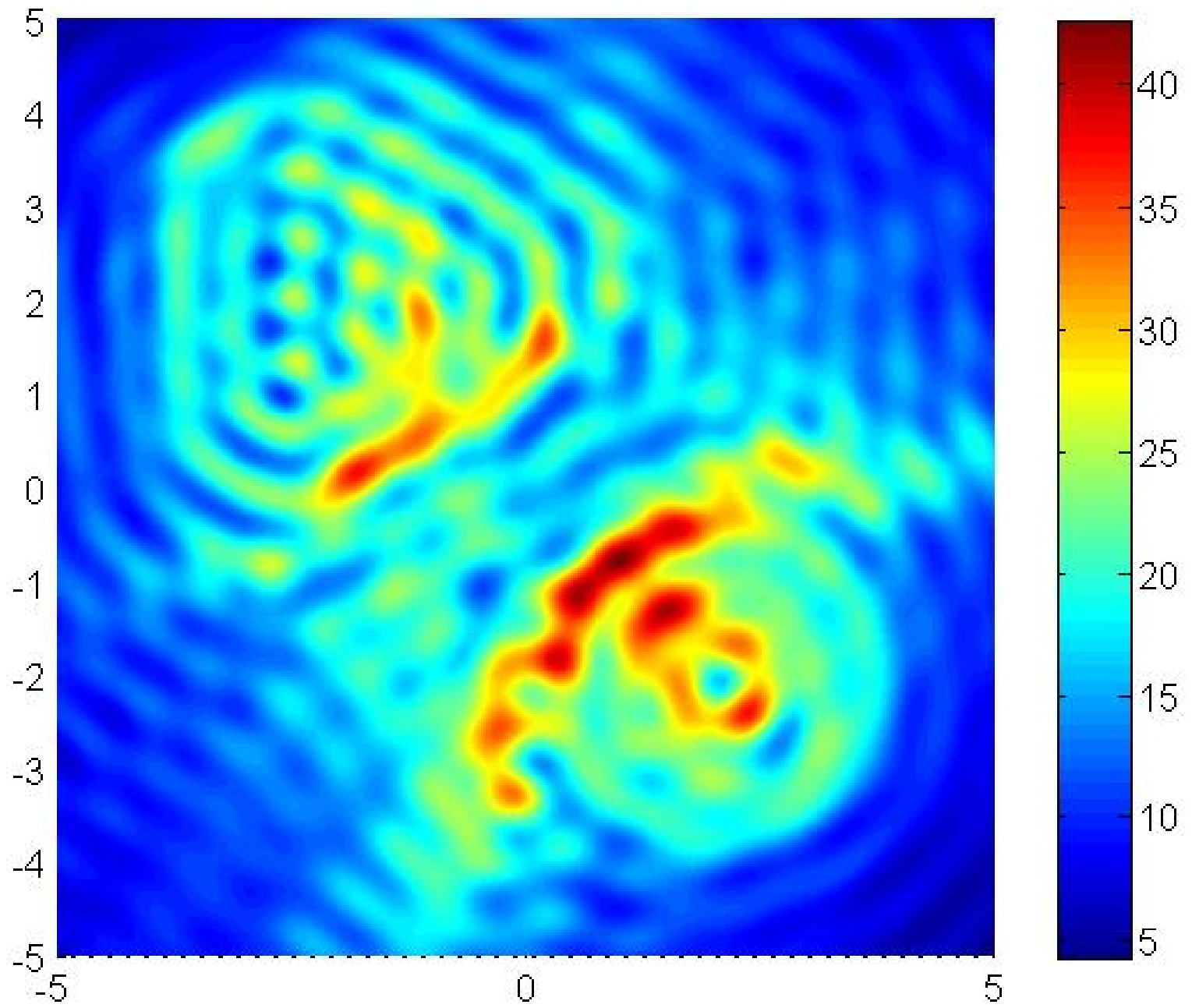}}
  \subfigure[\textbf{5\% noise, k=5}]{
    \includegraphics[width=1.5in]{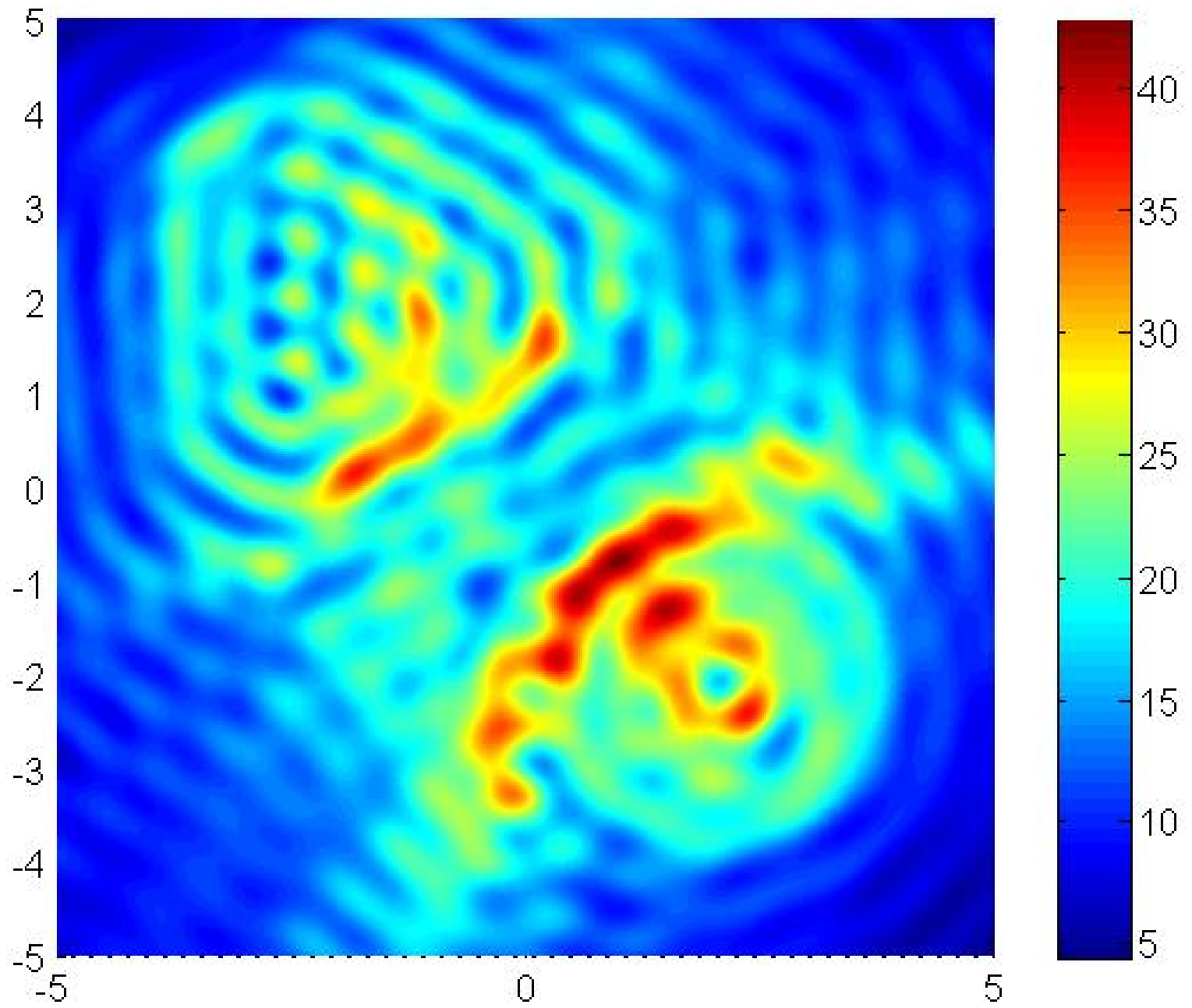}}
  \subfigure[\textbf{10\% noise, k=5}]{
    \includegraphics[width=1.5in]{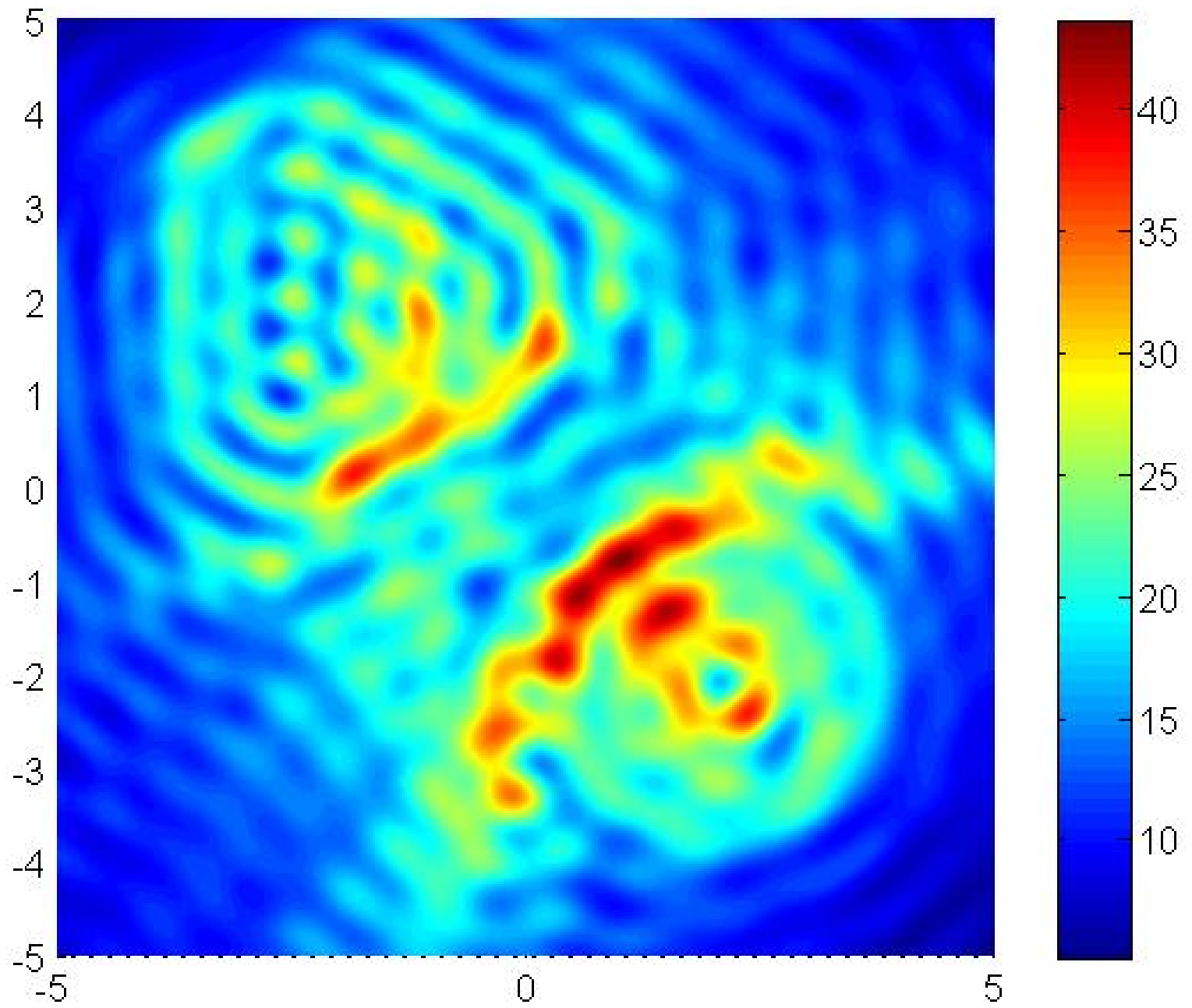}}
\caption{Imaging results of a rounded triangle-shaped, sound-soft obstacle and a circle-shaped,
penetrable obstacle of radius $r=2$ with the refractive index $n(x)=0.25$ and
the transmission constant $\la=0.5$ obtained by Algorithm \ref{al1} with phaseless data (top row)
and by the imaging algorithm with $I^A_F(z)$ in \cite{P10} with full data (bottom row), respectively.
}\label{fig7}
\end{figure}

\begin{figure}[htbp]
  \centering
  \subfigure[\textbf{Exact curves}]{
    \includegraphics[width=1.5in]{pic/example4/case1/triangle_circle.eps}}
  \subfigure[\textbf{No noise, k=10}]{
    \includegraphics[width=1.5in]{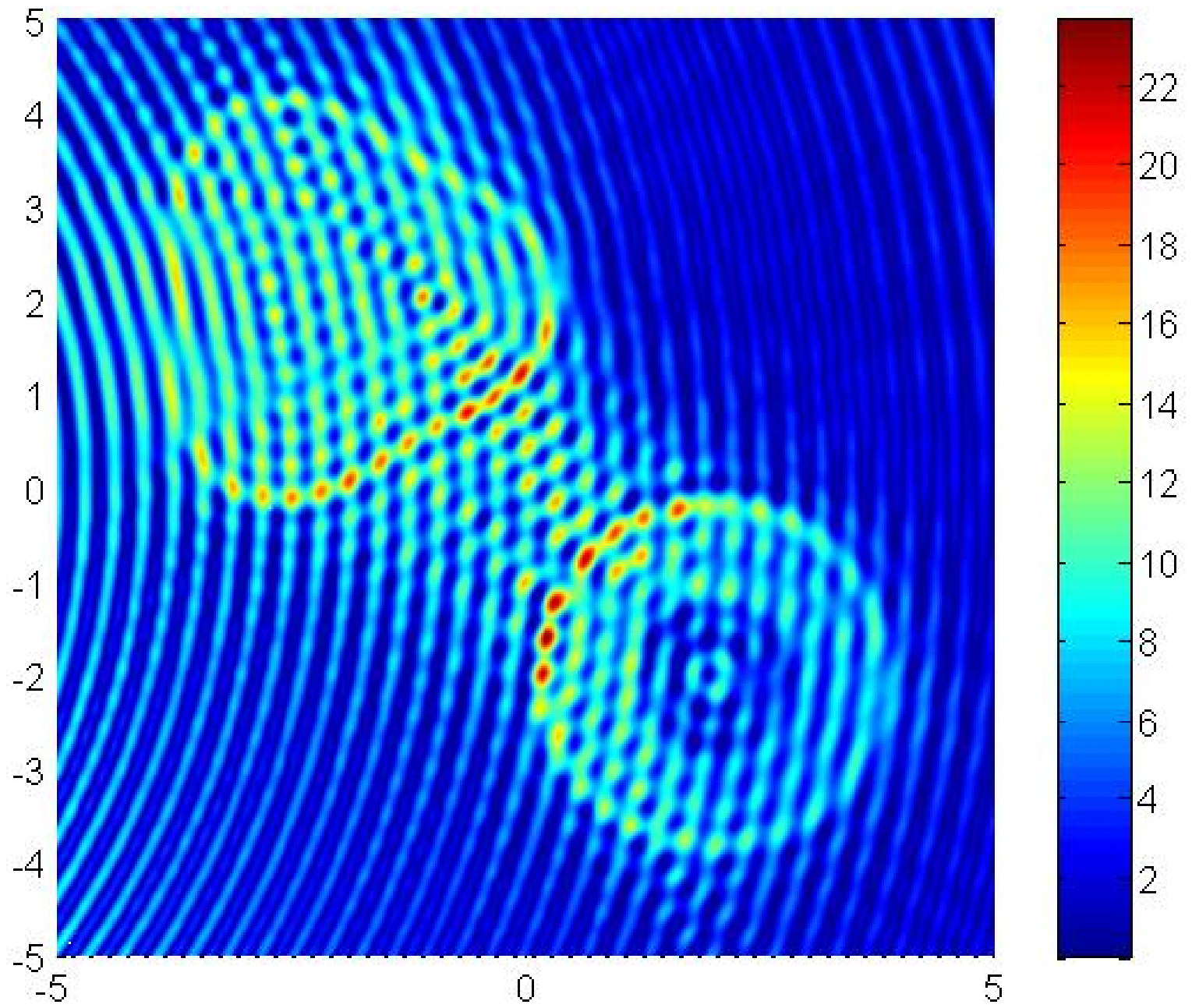}}
  \subfigure[\textbf{5\% noise, k=10}]{
    \includegraphics[width=1.5in]{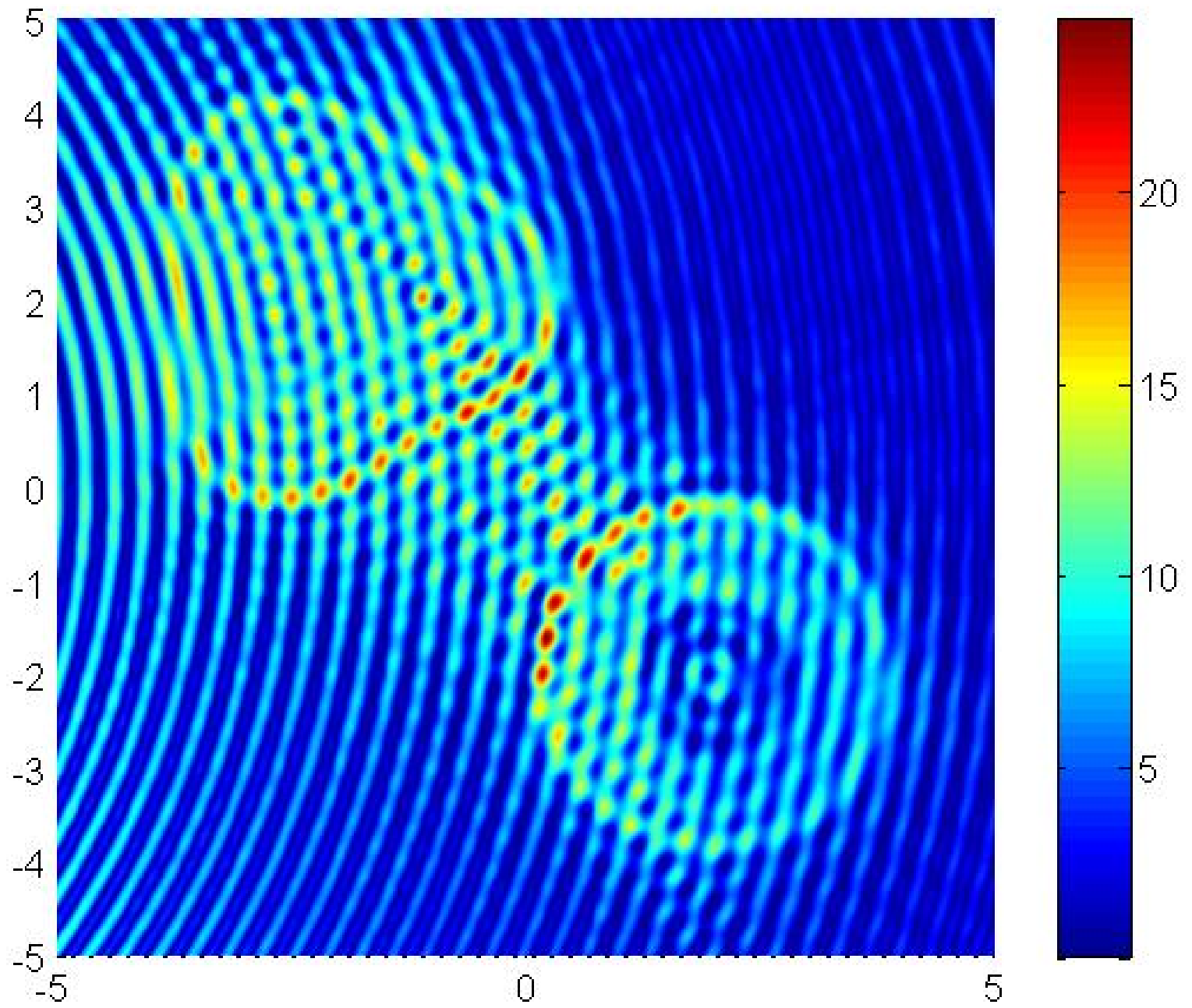}}
  \subfigure[\textbf{10\% noise, k=10}]{
    \includegraphics[width=1.5in]{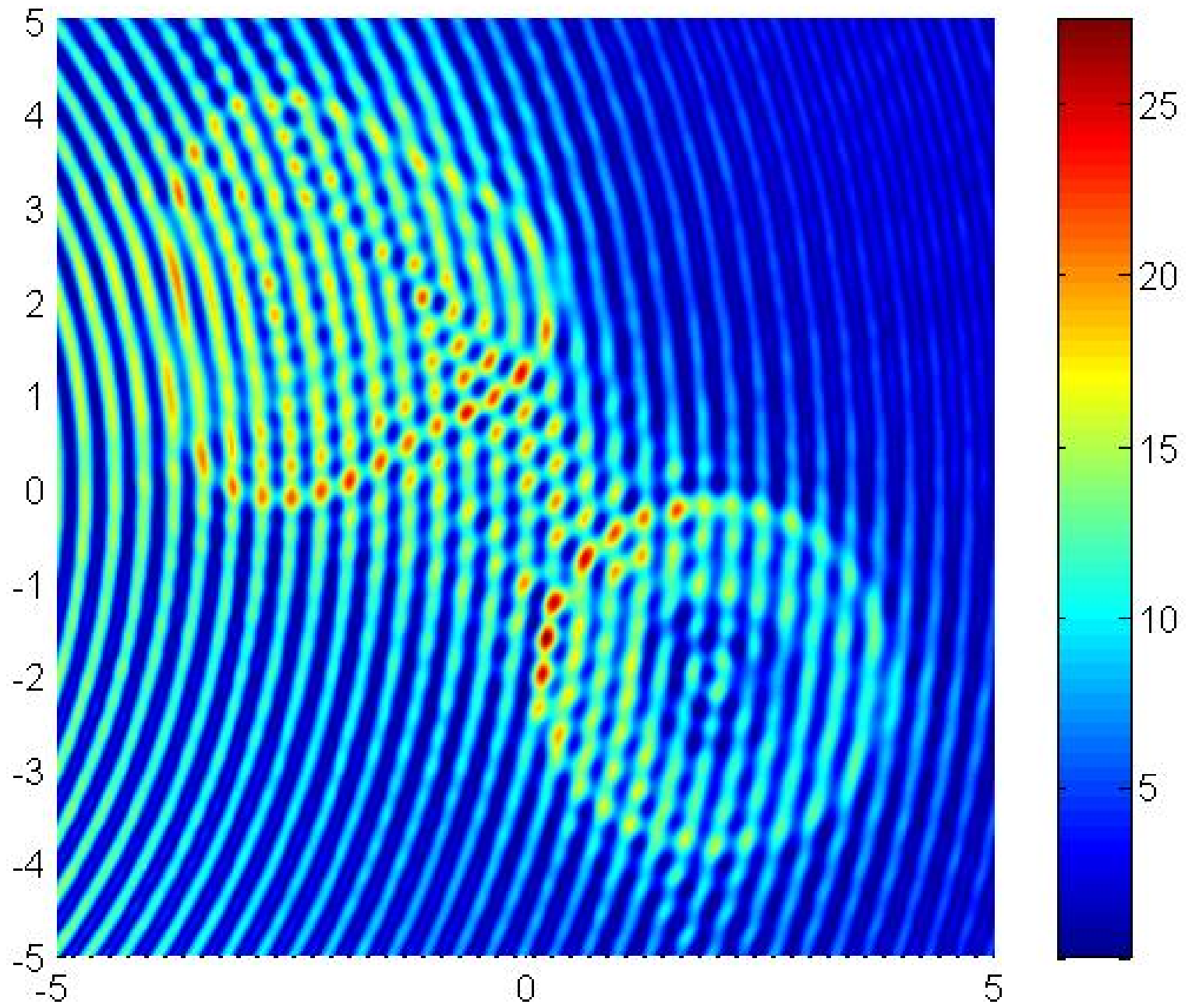}}
  \subfigure[\textbf{No noise, k=10}]{
    \includegraphics[width=1.5in]{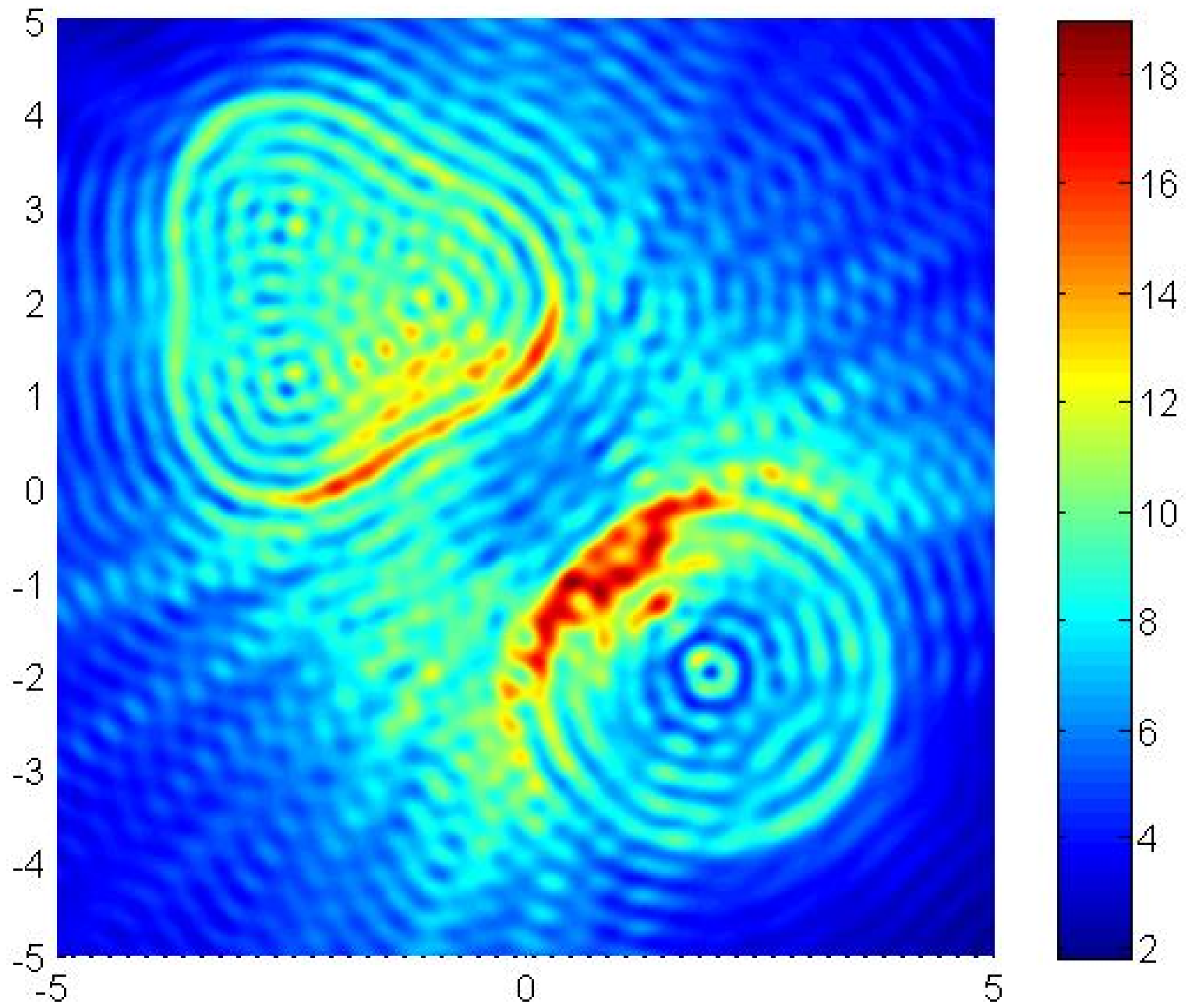}}
  \subfigure[\textbf{5\% noise, k=10}]{
    \includegraphics[width=1.5in]{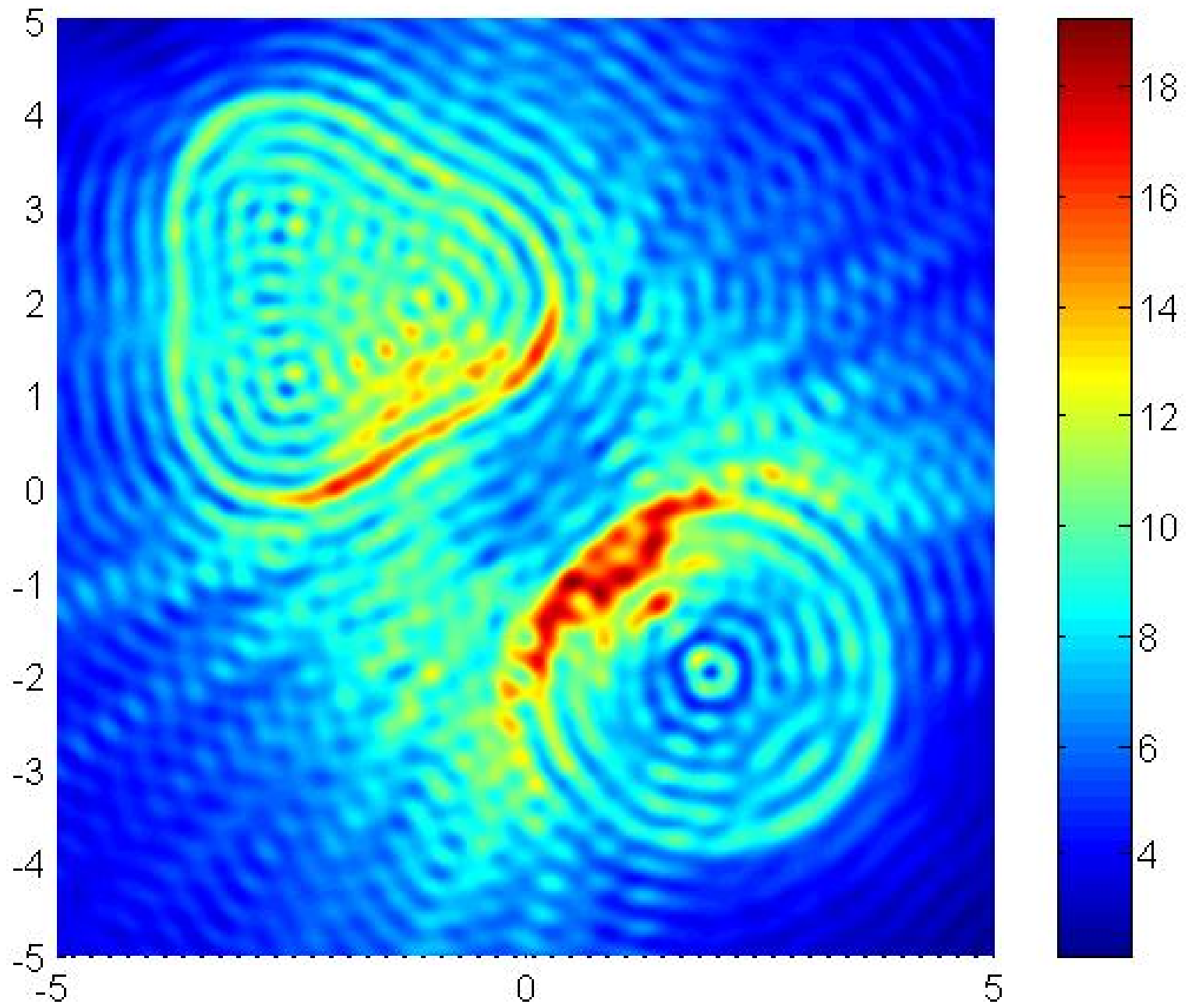}}
  \subfigure[\textbf{10\% noise, k=10}]{
    \includegraphics[width=1.5in]{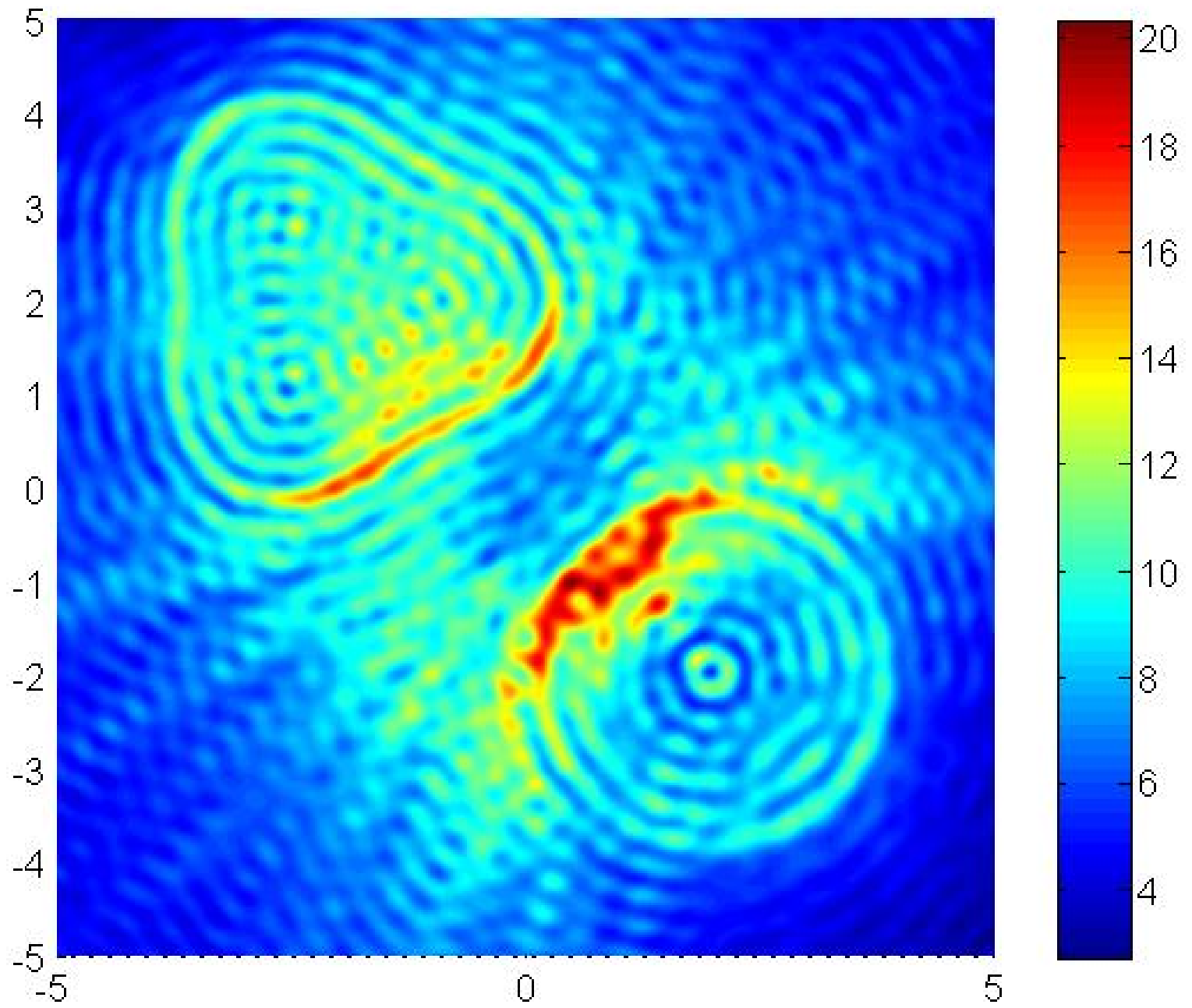}}
\caption{Imaging results of a rounded triangle-shaped, sound-soft obstacle and a circle-shaped,
penetrable obstacle of radius $r=2$ with the refractive index $n(x)=0.25$ and the transmission
constant $\la=0.5$ obtained by Algorithm \ref{al1} with phaseless data (top row) and
by the imaging algorithm with $I^A_F(z)$ in \cite{P10} with full data (bottom row), respectively.
}\label{fig11}
\end{figure}

\begin{figure}[htbp]
  \centering
  \subfigure[\textbf{Exact curves}]{
    \includegraphics[width=1.5in]{pic/example4/case1/triangle_circle.eps}}
  \subfigure[\textbf{No noise, k=20}]{
    \includegraphics[width=1.5in]{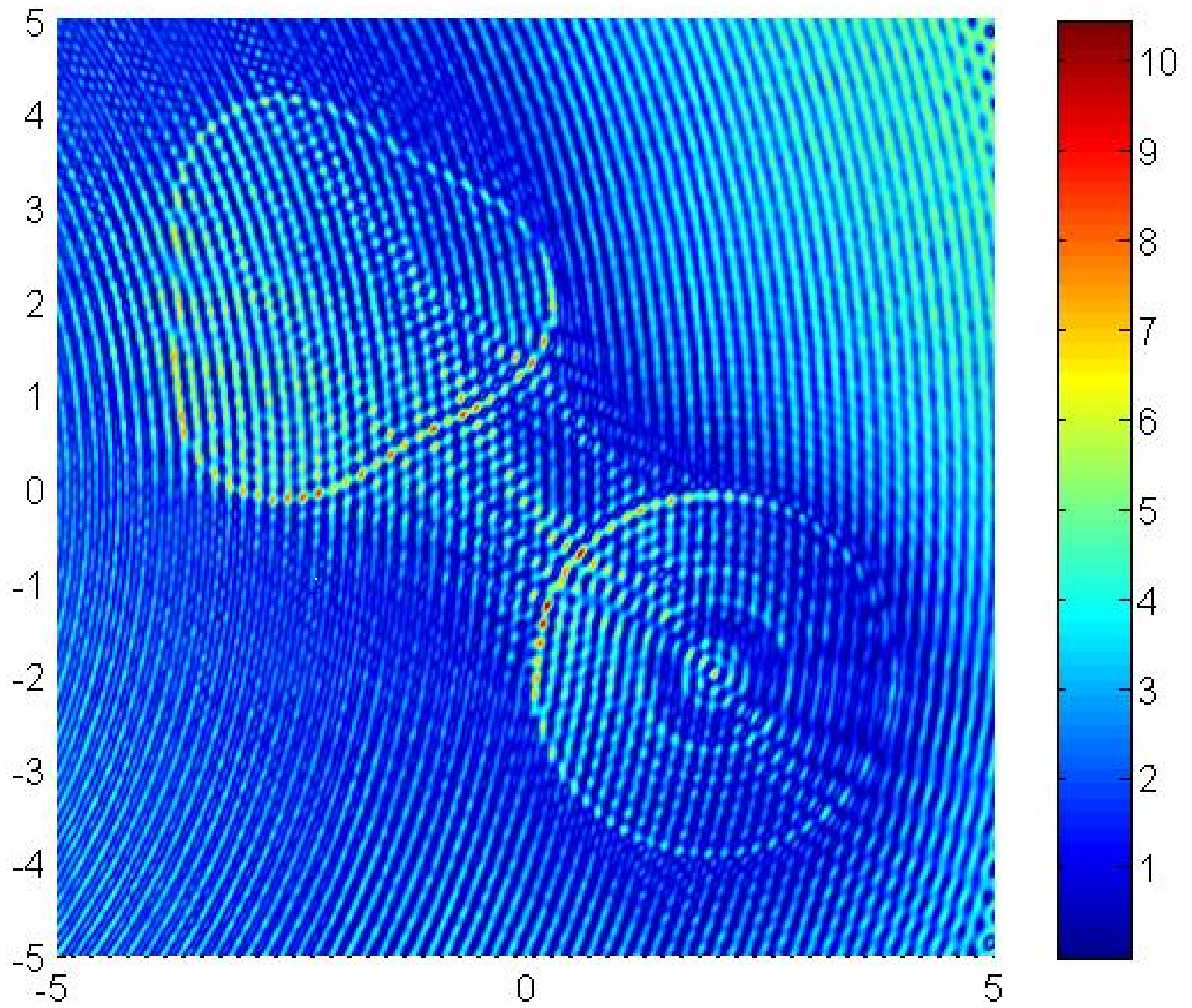}}
  \subfigure[\textbf{5\% noise, k=20}]{
    \includegraphics[width=1.5in]{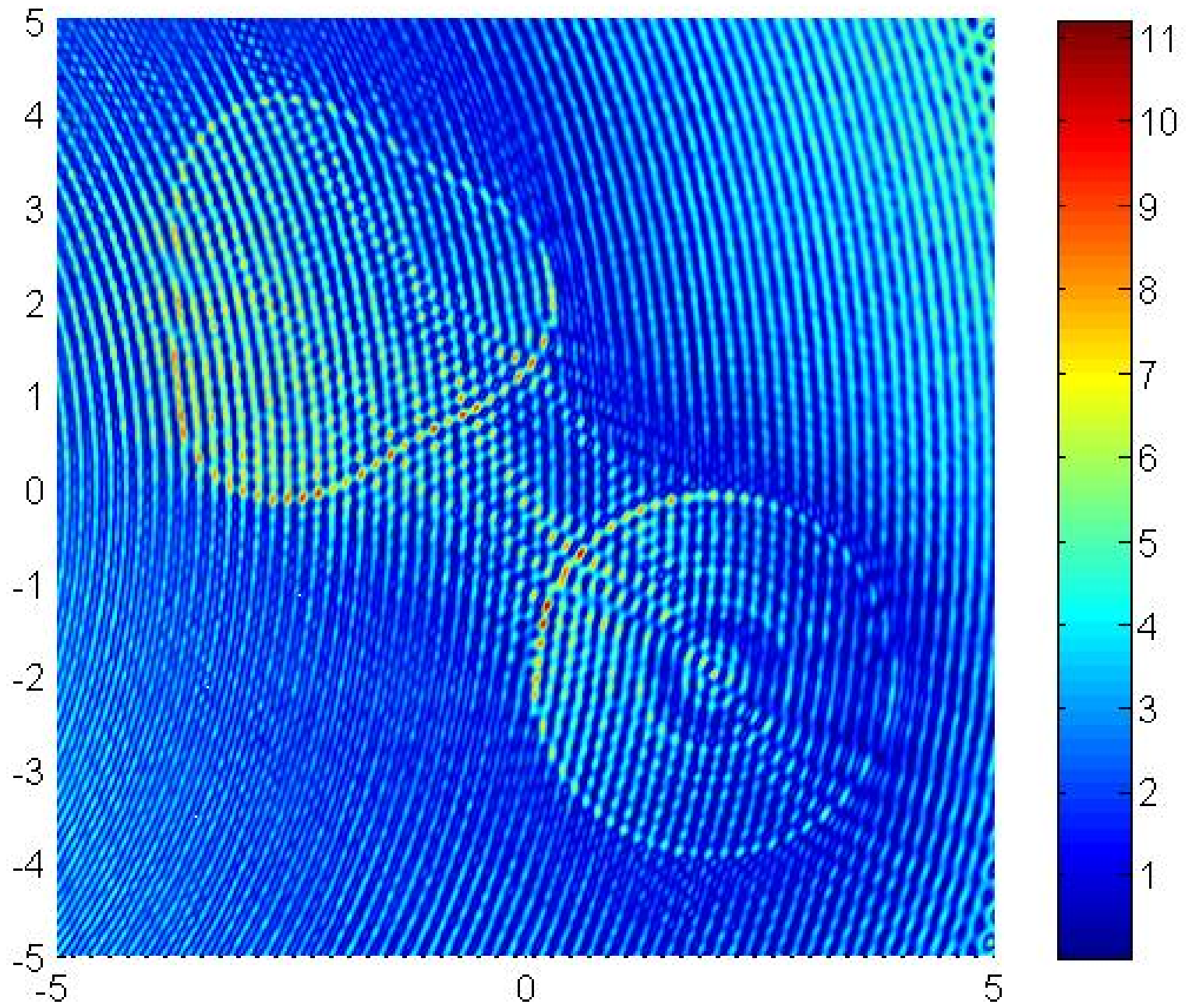}}
  \subfigure[\textbf{10\% noise, k=20}]{
    \includegraphics[width=1.5in]{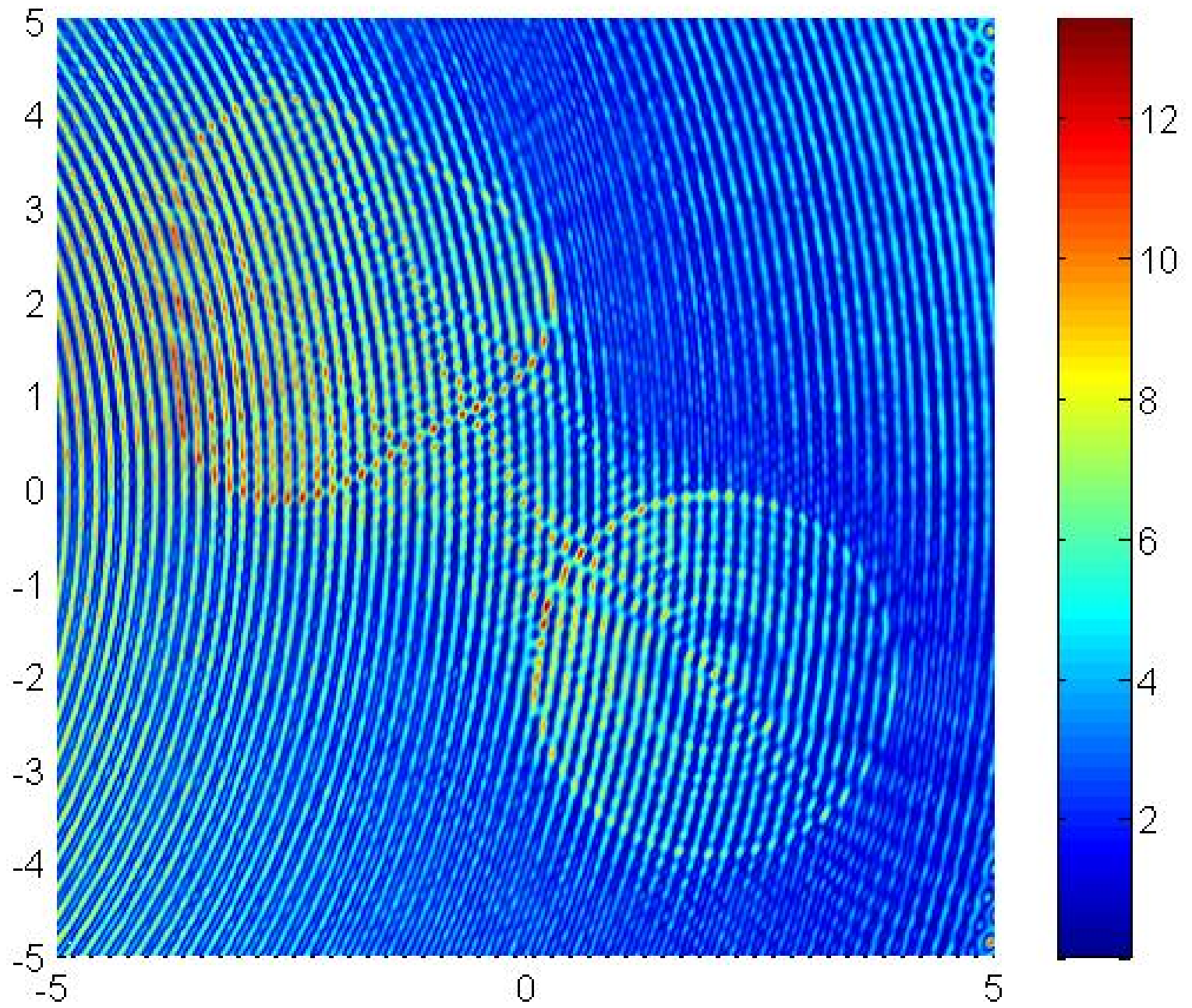}}
  \subfigure[\textbf{No noise, k=20}]{
    \includegraphics[width=1.5in]{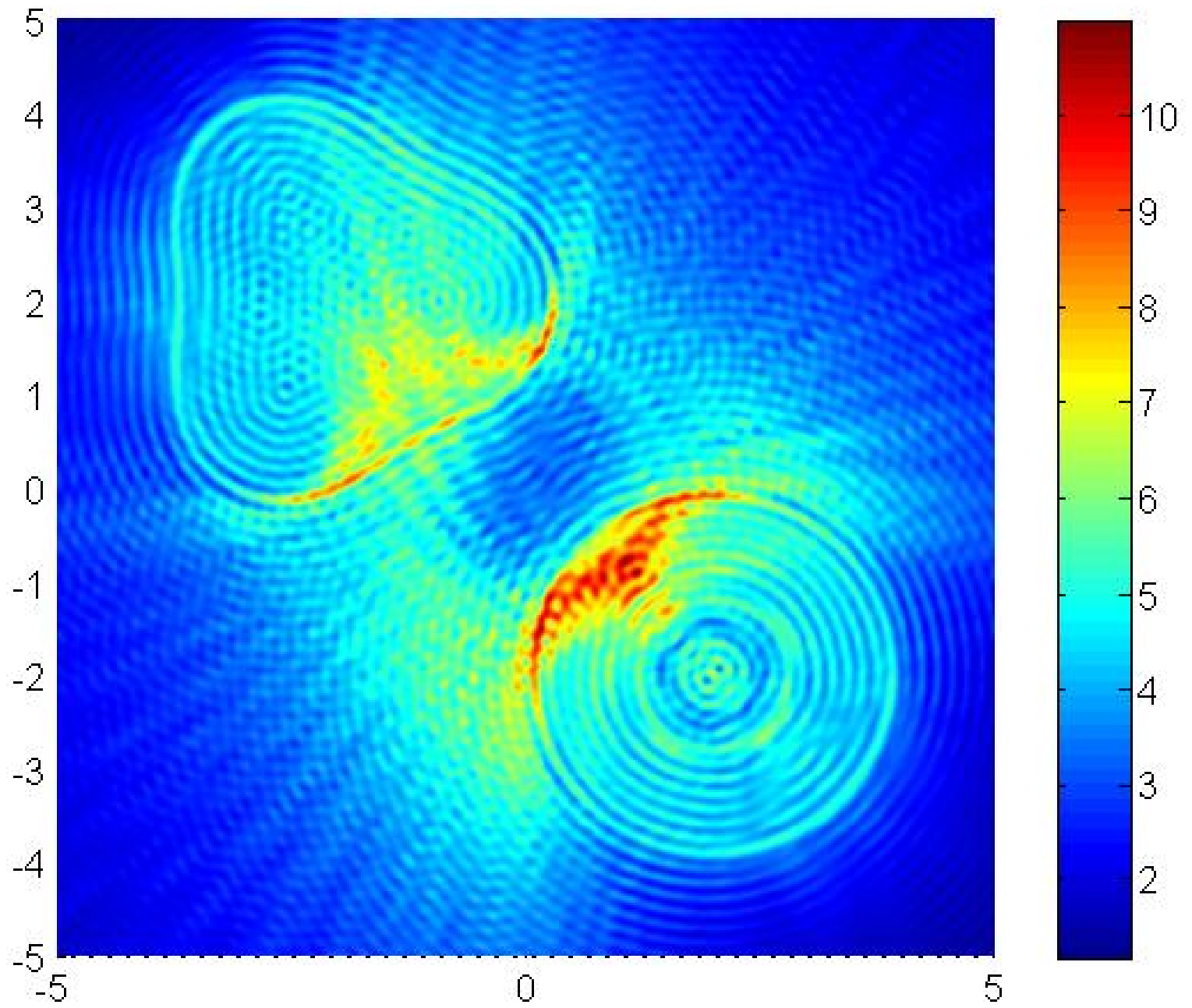}}
  \subfigure[\textbf{5\% noise, k=20}]{
    \includegraphics[width=1.5in]{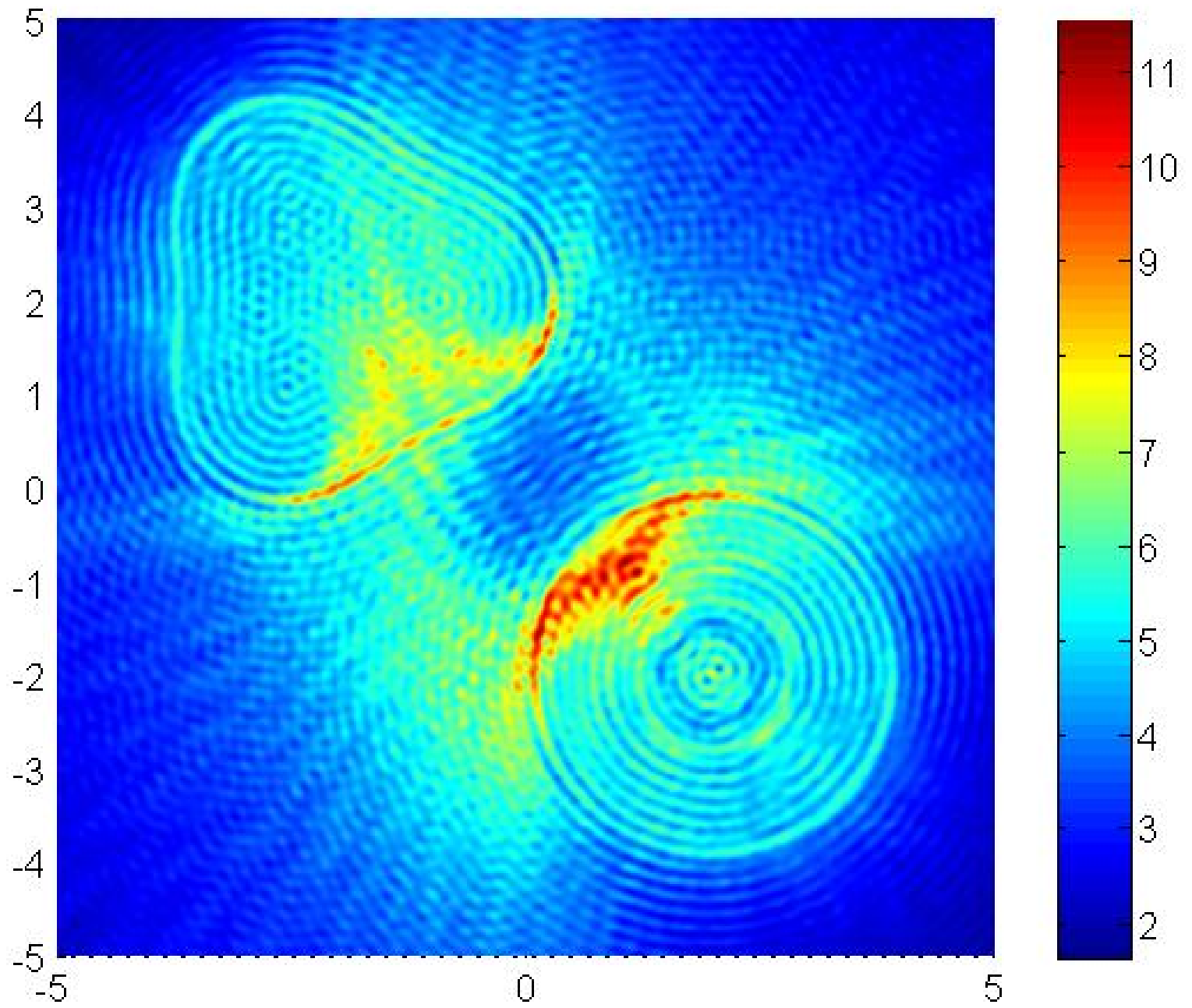}}
  \subfigure[\textbf{10\% noise, k=20}]{
    \includegraphics[width=1.5in]{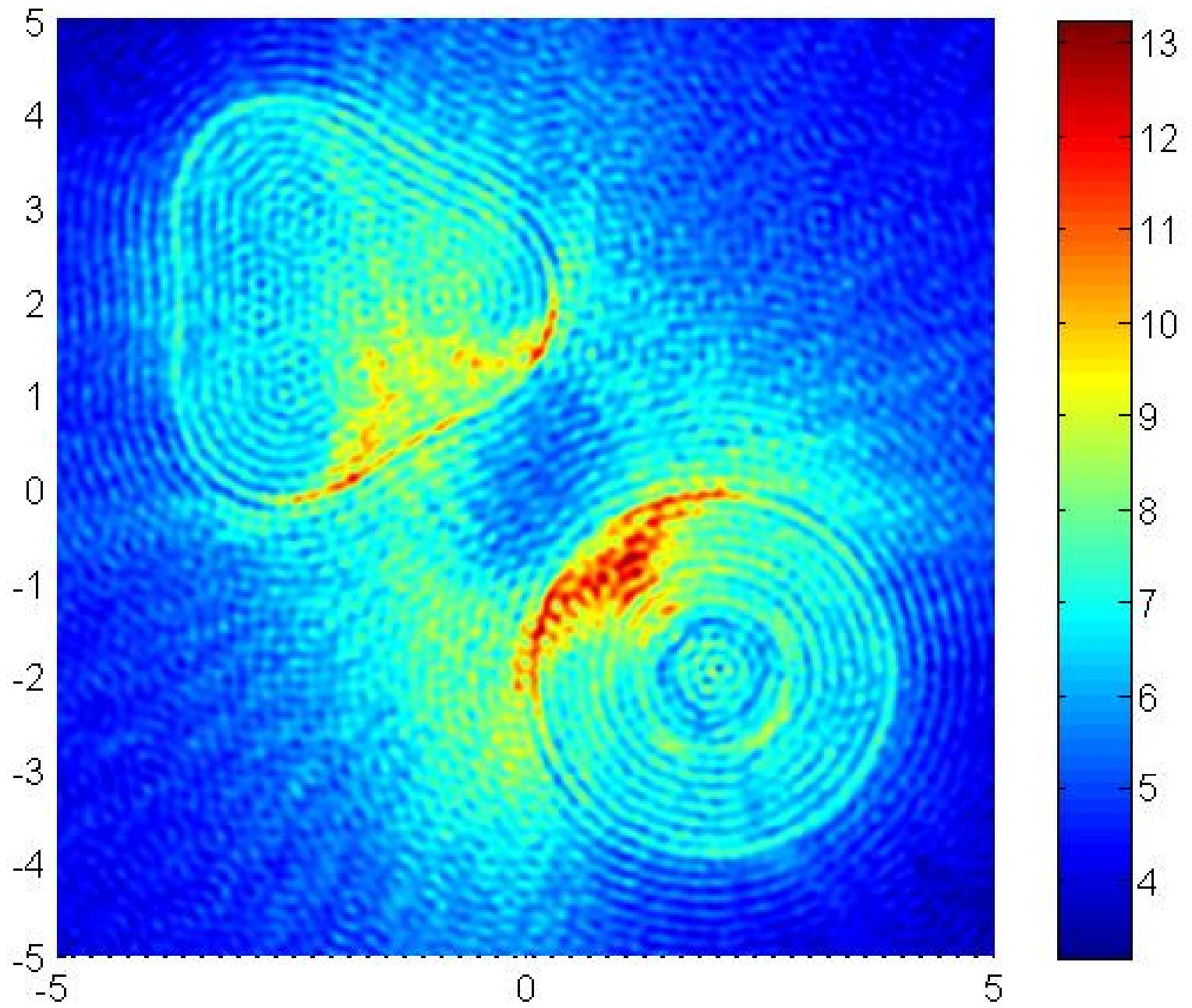}}
\caption{Imaging results of a rounded triangle-shaped, sound-soft obstacle and a circle-shaped,
penetrable obstacle of radius $r=2$ with the refractive index $n(x)=0.25$ and the transmission
constant $\la=0.5$ obtained by Algorithm \ref{al1} with phaseless data (top row) and
by the imaging algorithm with $I^A_F(z)$ in \cite{P10} with full data (bottom row), respectively.
}\label{fig12}
\end{figure}

\begin{figure}[htbp]
  \centering
  \subfigure[\textbf{Exact curves}]{\label{fig8-1}
    \includegraphics[width=1.5in]{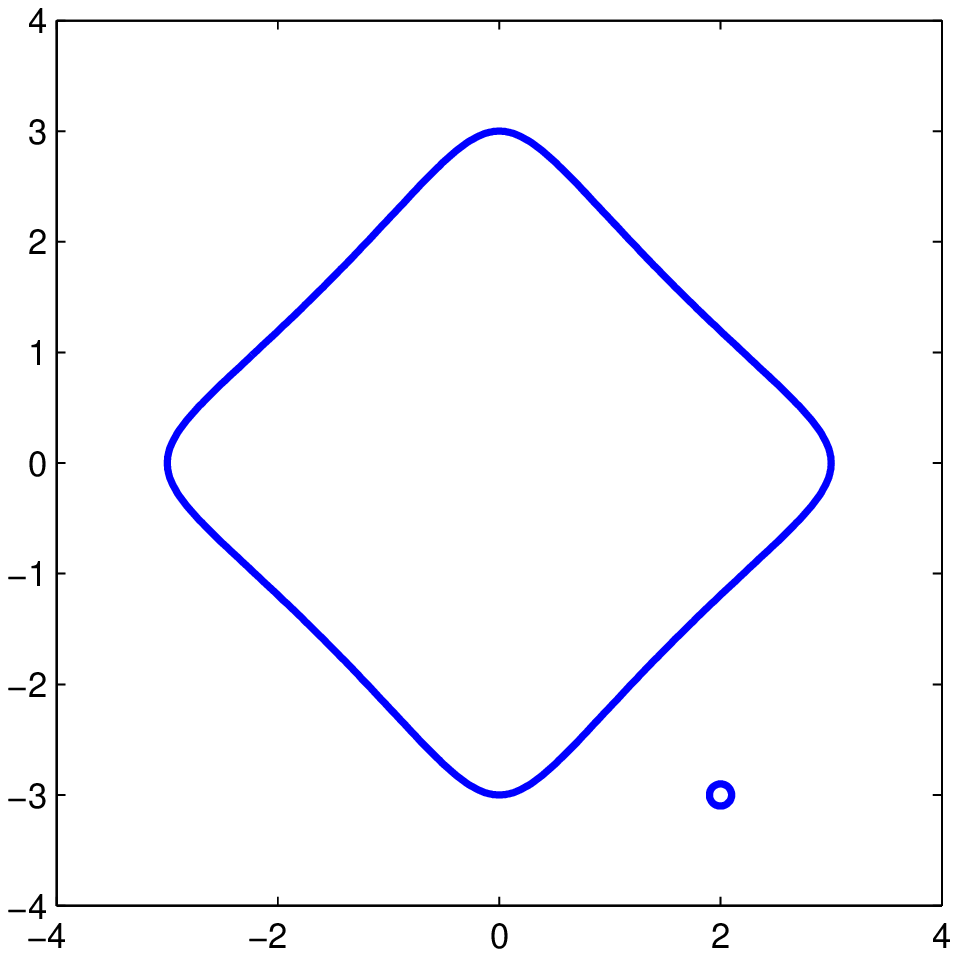}}
  \subfigure[\textbf{No noise, k=5}]{
    \includegraphics[width=1.5in]{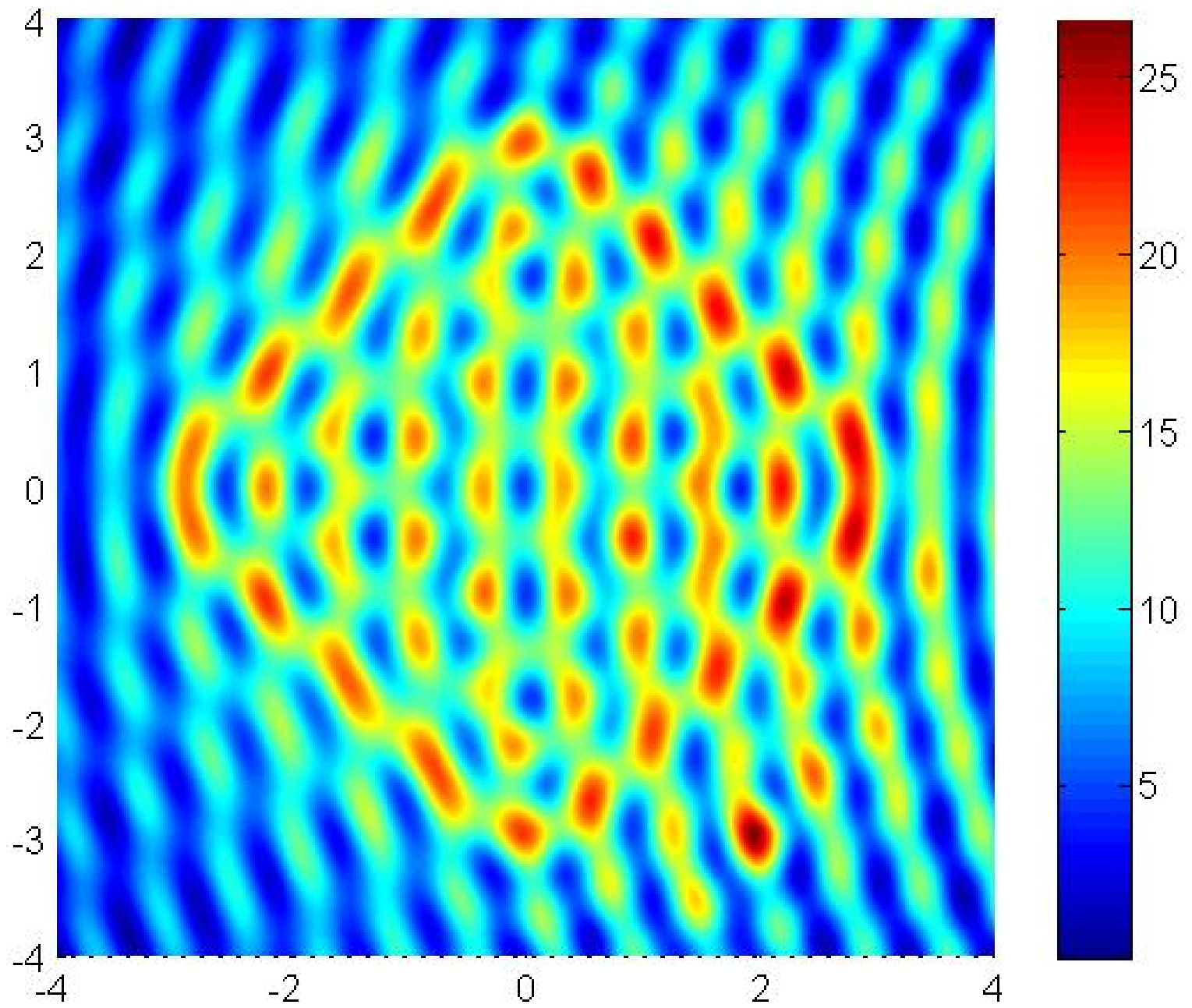}}
  \subfigure[\textbf{5\% noise, k=5}]{
    \includegraphics[width=1.5in]{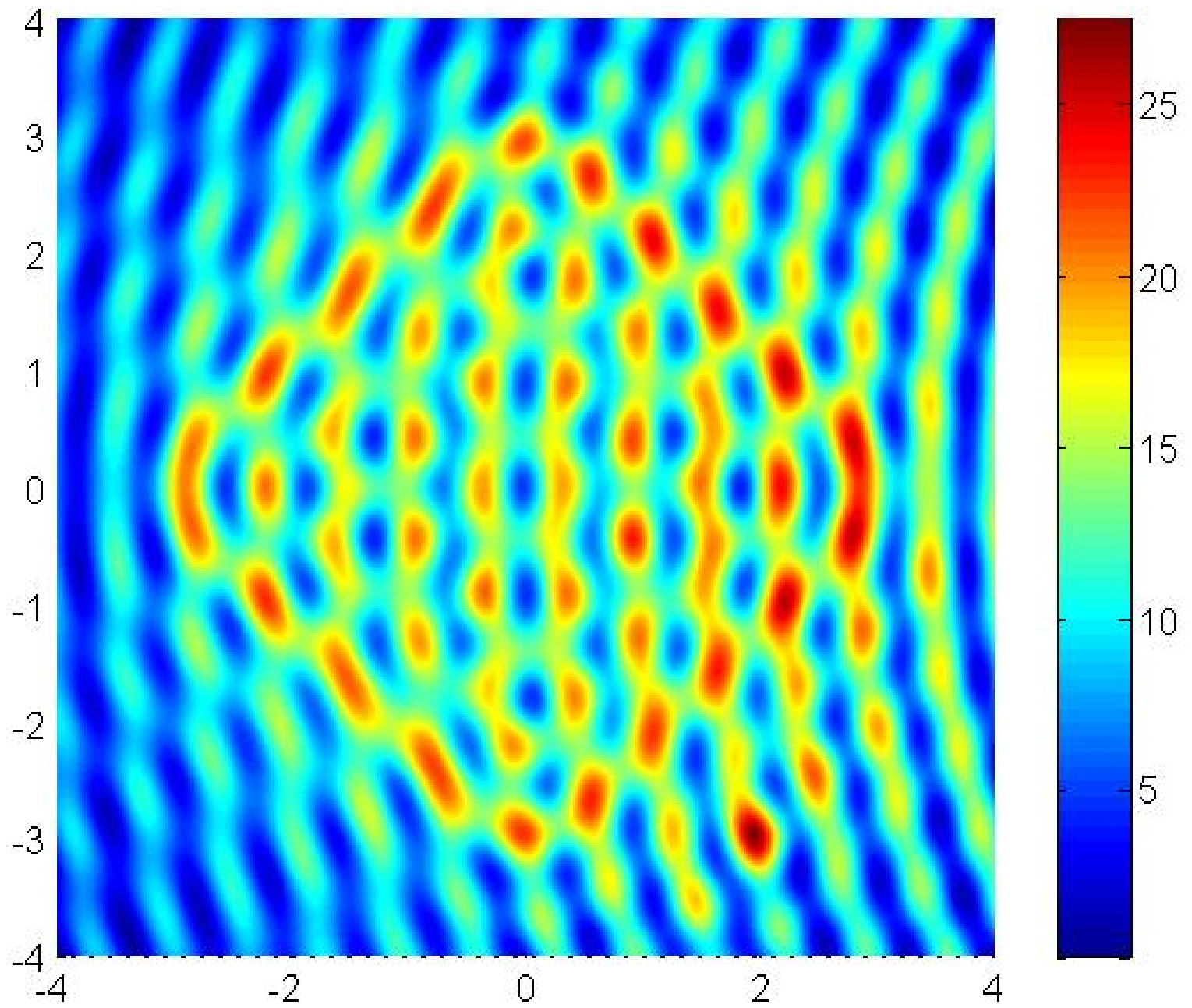}}
  \subfigure[\textbf{10\% noise, k=5}]{
    \includegraphics[width=1.5in]{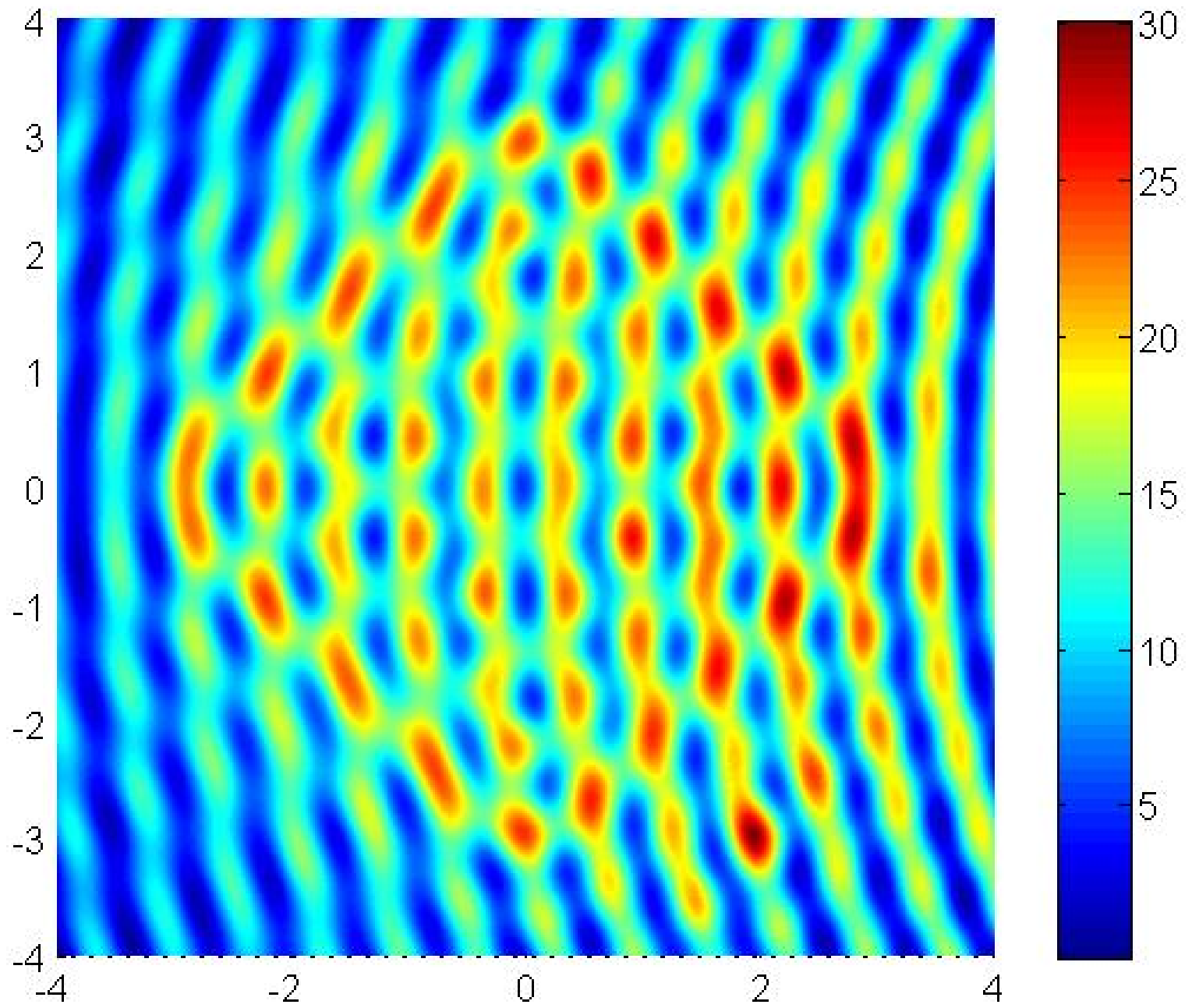}}
  \subfigure[\textbf{No noise, k=5}]{
    \includegraphics[width=1.5in]{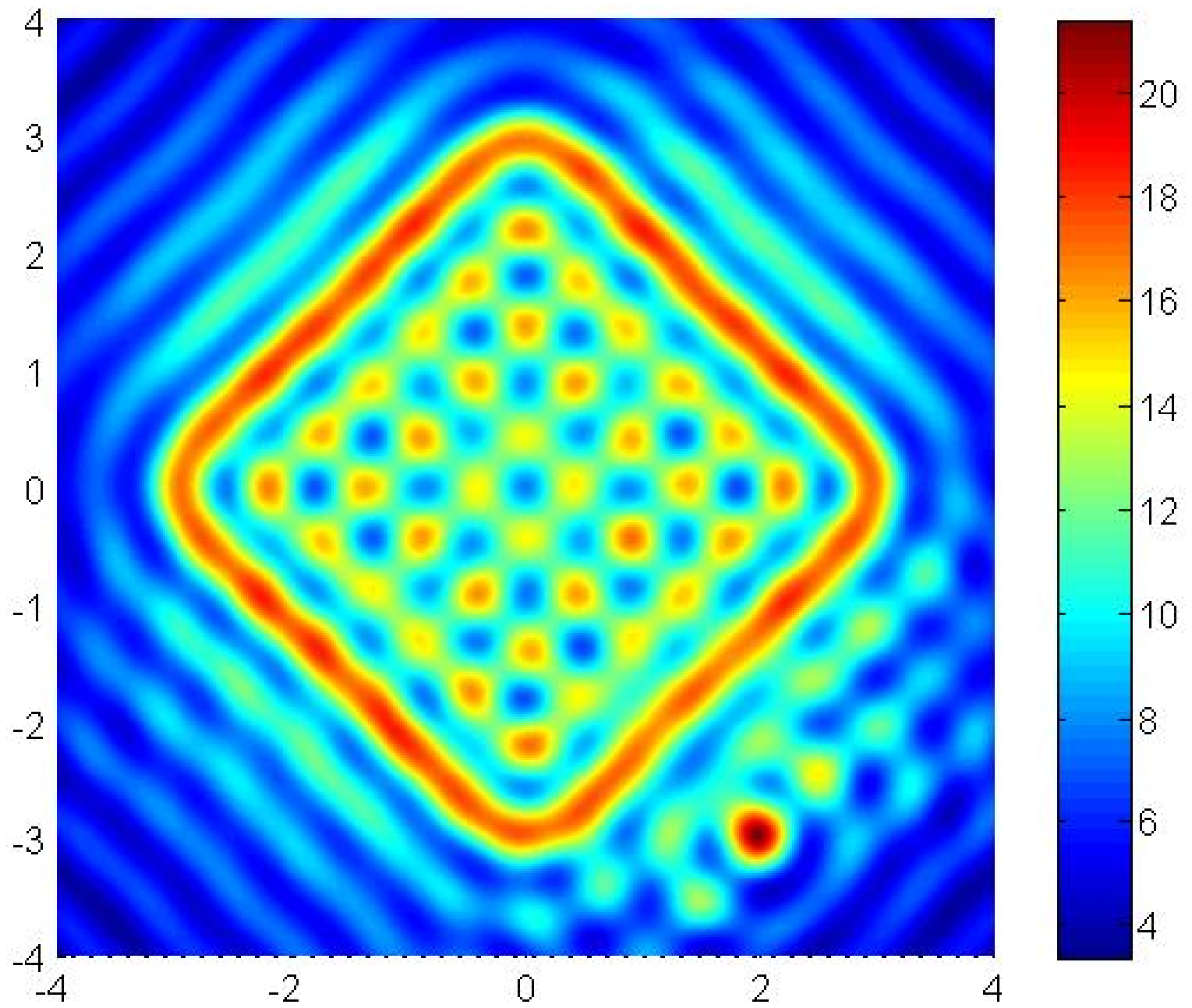}}
  \subfigure[\textbf{5\% noise, k=5}]{
    \includegraphics[width=1.5in]{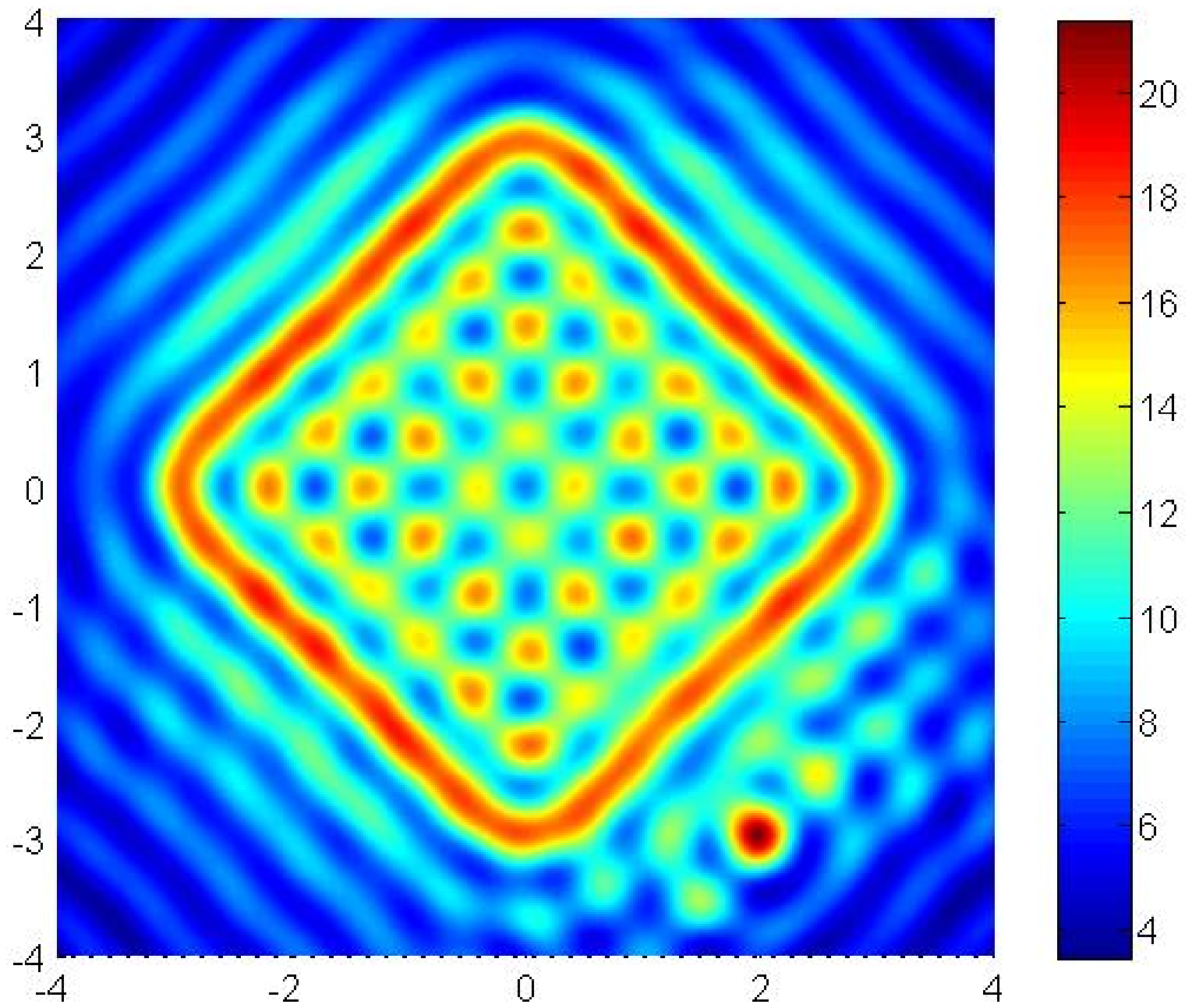}}
  \subfigure[\textbf{10\% noise, k=5}]{
    \includegraphics[width=1.5in]{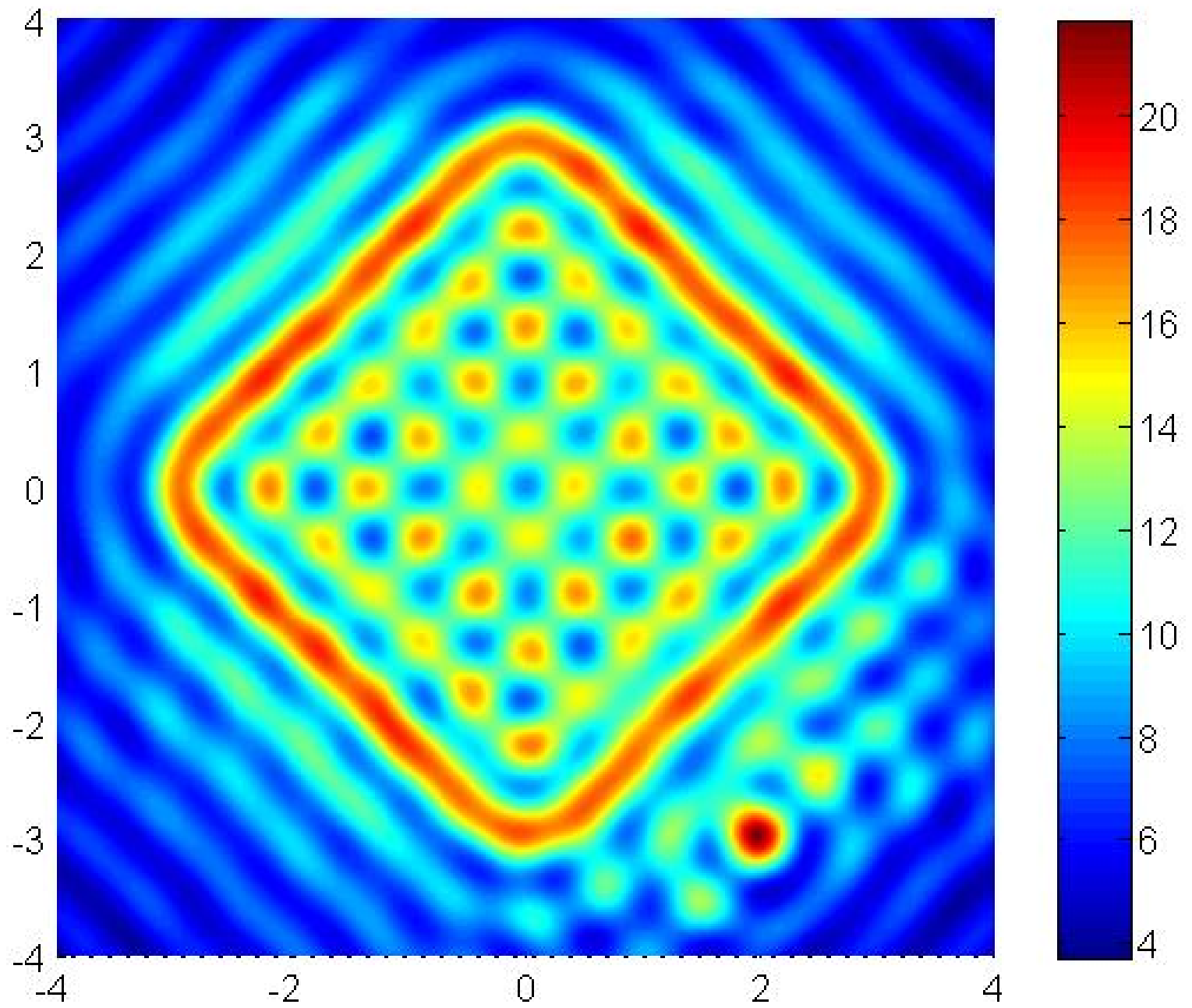}}
\caption{Imaging results of a circle-shaped, sound-soft obstacle with the radius $r=0.1$
and a rounded square-shaped, impedance obstacle with the impedance function $\rho(x)=5$,
obtained by Algorithm \ref{al1} with phaseless data (top row) and by the imaging algorithm
with $I^A_F(z)$ in \cite{P10} with full data (bottom row), respectively.
}\label{fig8}
\end{figure}

\begin{figure}[htbp]
  \centering
  \subfigure[\textbf{Exact curves}]{
    \includegraphics[width=1.5in]{pic/example4/case2/square_circle.eps}}
  \subfigure[\textbf{No noise, k=10}]{
    \includegraphics[width=1.5in]{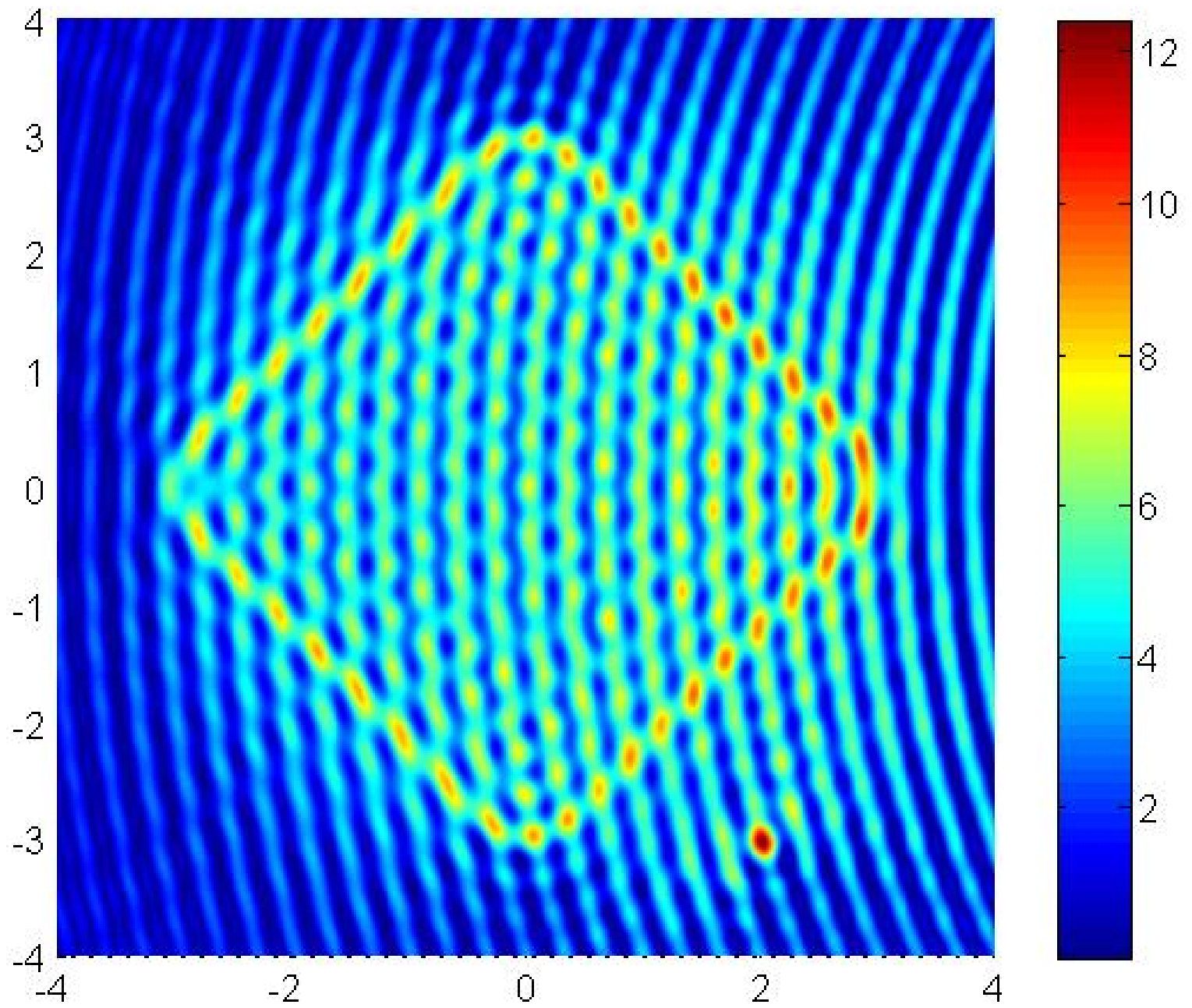}}
  \subfigure[\textbf{5\% noise, k=10}]{
    \includegraphics[width=1.5in]{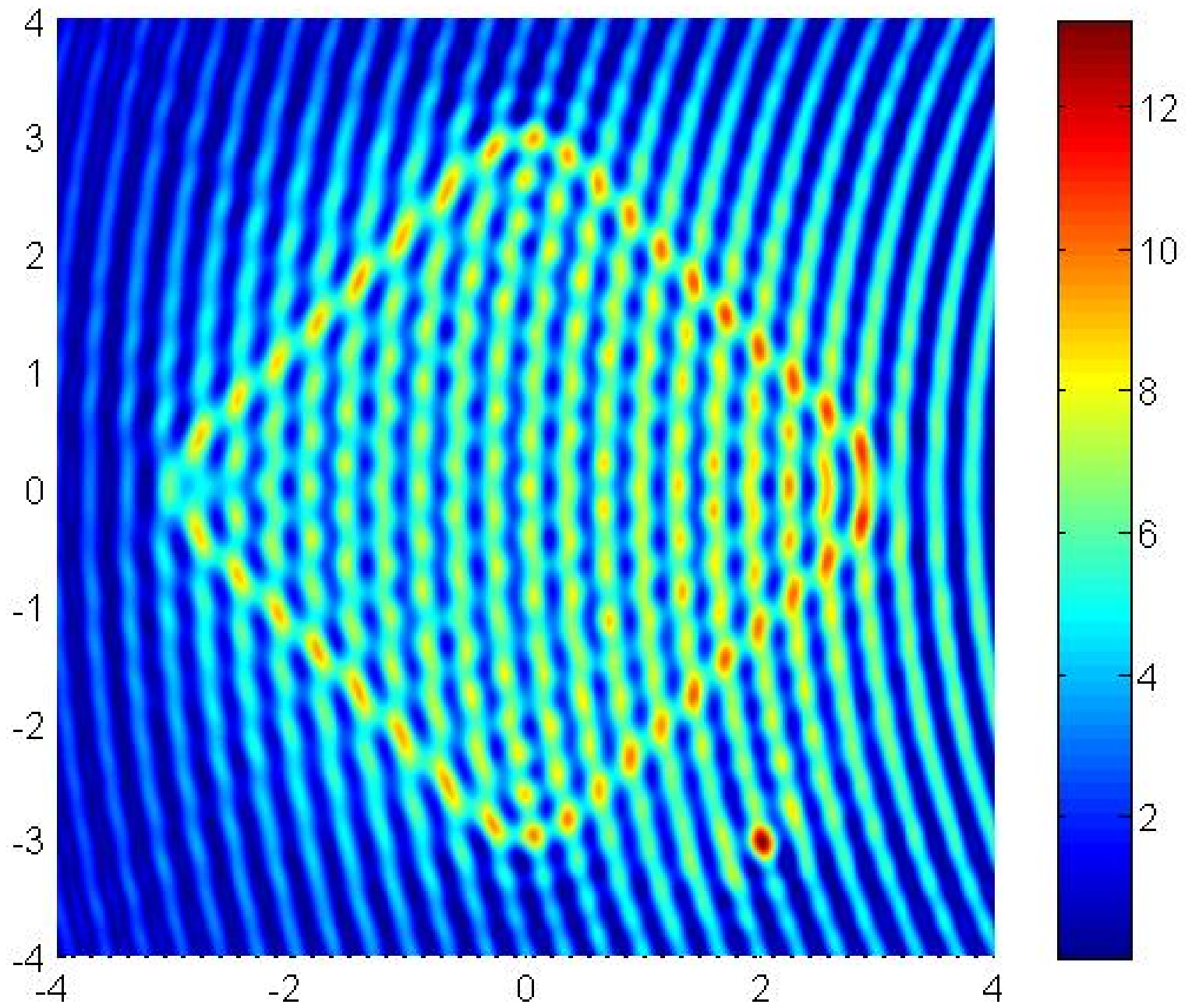}}
  \subfigure[\textbf{10\% noise, k=10}]{
    \includegraphics[width=1.5in]{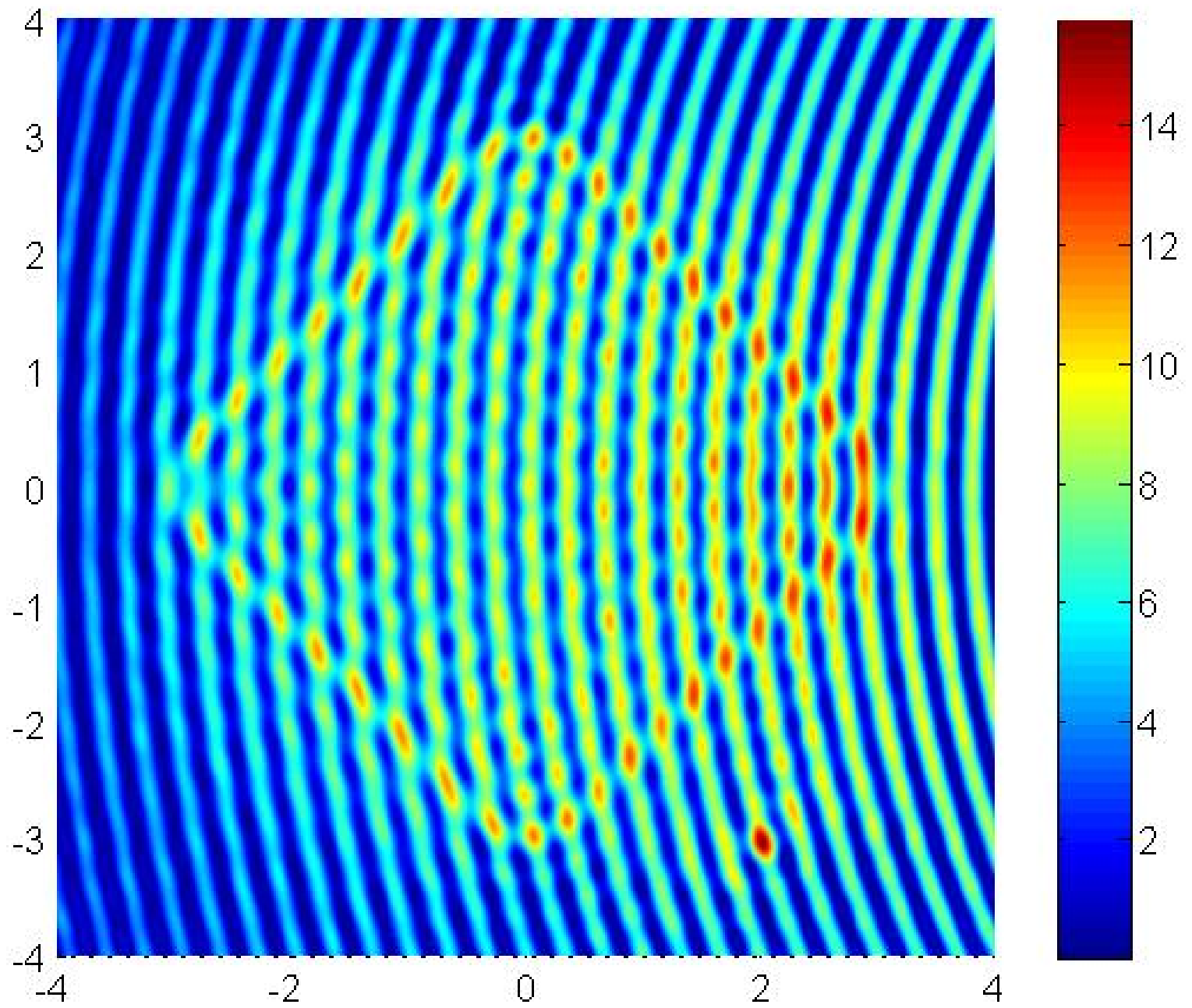}}
  \subfigure[\textbf{No noise, k=10}]{
    \includegraphics[width=1.5in]{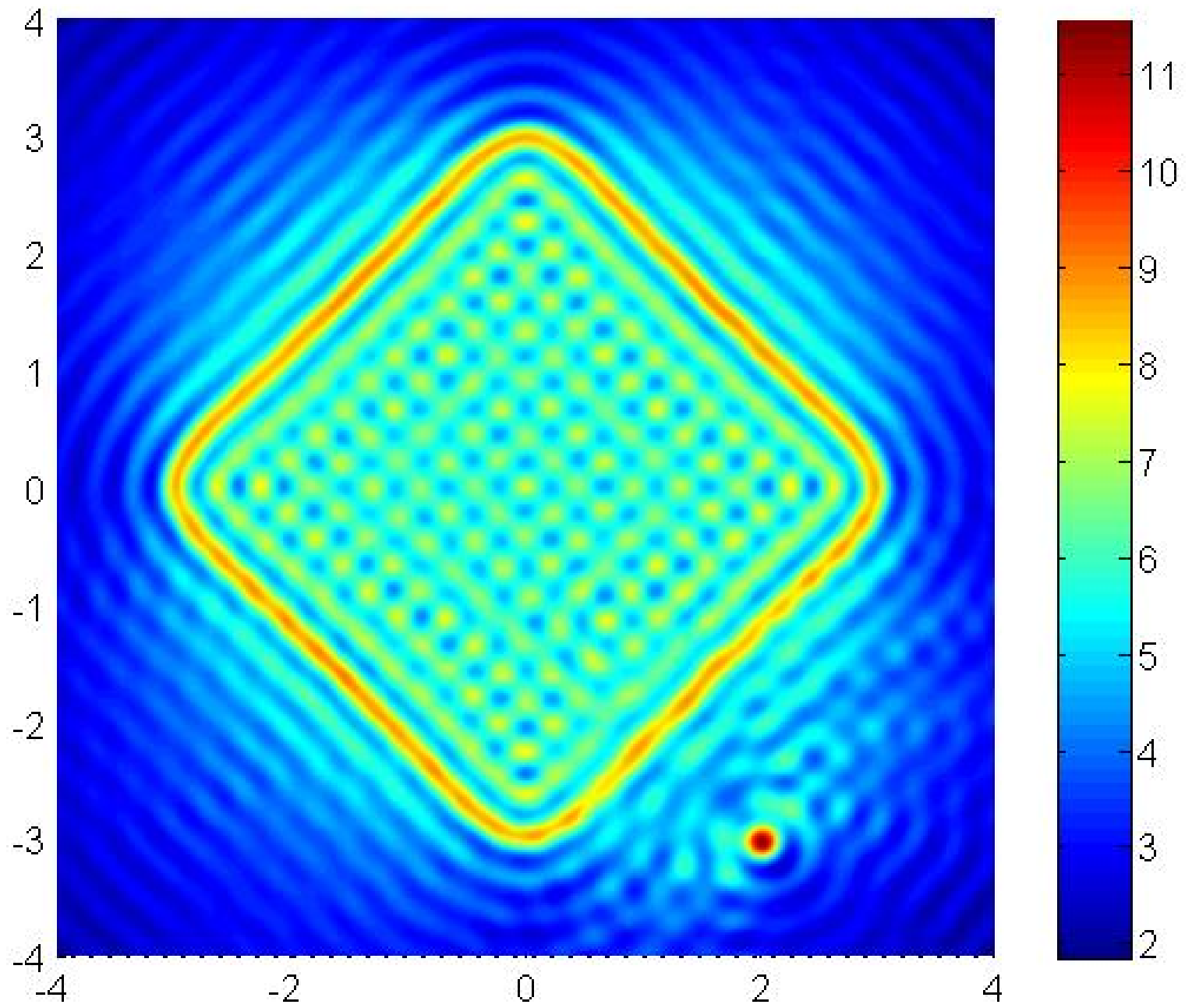}}
  \subfigure[\textbf{5\% noise, k=10}]{
    \includegraphics[width=1.5in]{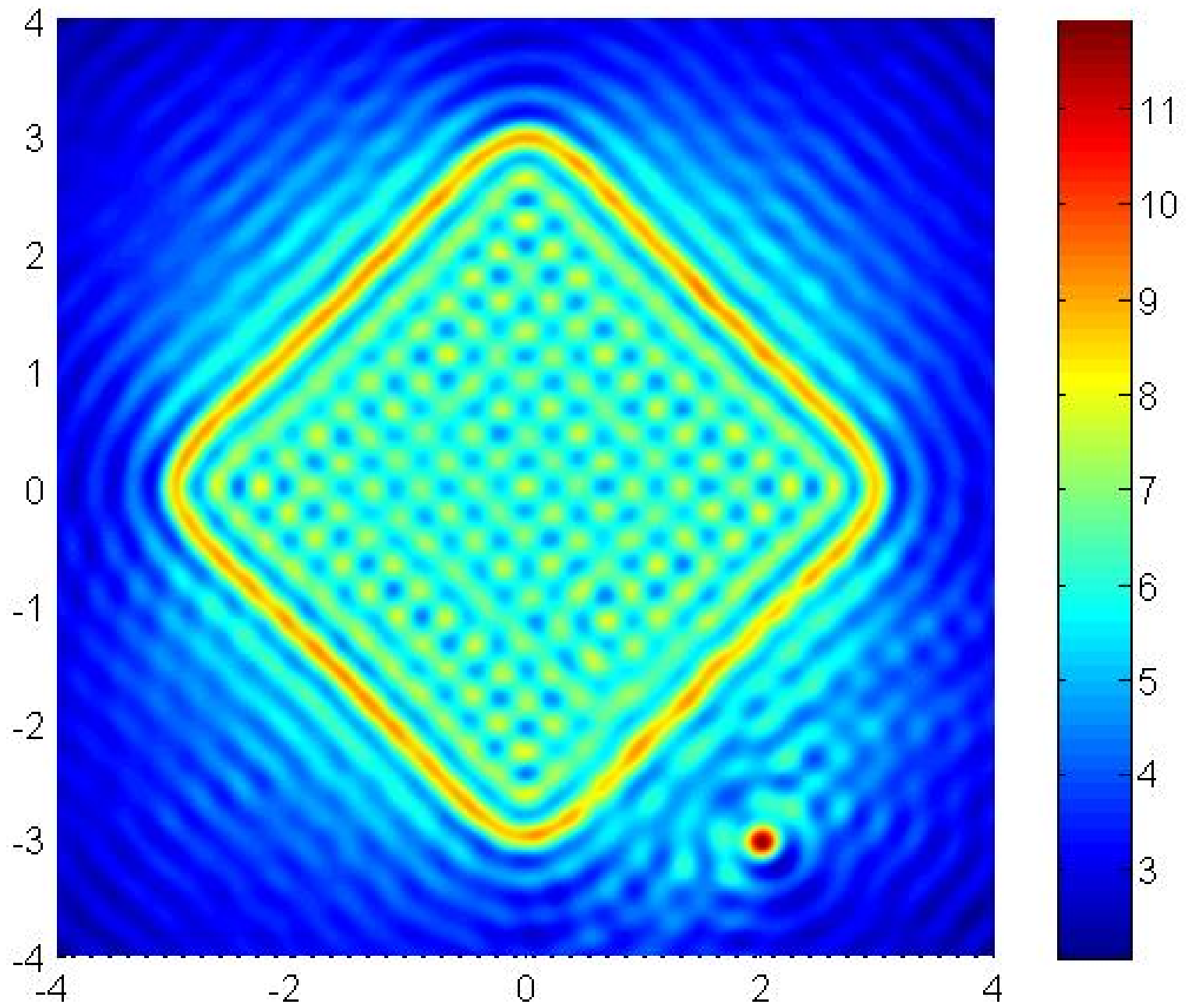}}
  \subfigure[\textbf{10\% noise, k=10}]{
    \includegraphics[width=1.5in]{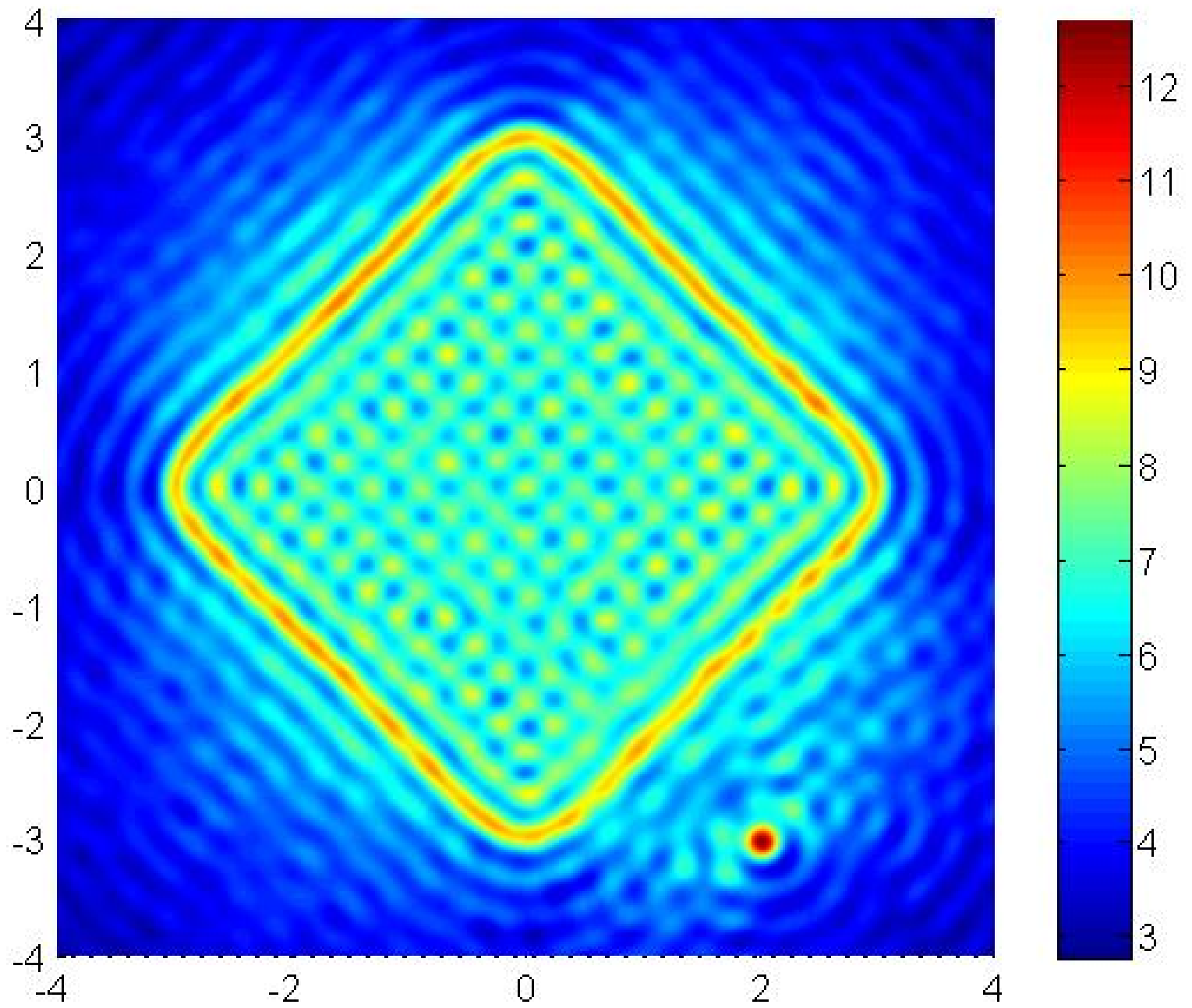}}
\caption{Imaging results of a circle-shaped, sound-soft obstacle with the radius $r=0.1$
and a rounded square-shaped, impedance obstacle with the impedance function $\rho(x)=5$,
obtained by Algorithm \ref{al1} with phaseless data (top row) and by the imaging algorithm
with $I^A_F(z)$ in \cite{P10} with full data (bottom row), respectively.
}\label{fig13}
\end{figure}

\begin{figure}[htbp]
  \centering
  \subfigure[\textbf{Exact curves}]{
    \includegraphics[width=1.5in]{pic/example4/case2/square_circle.eps}}
  \subfigure[\textbf{No noise, k=20}]{
    \includegraphics[width=1.5in]{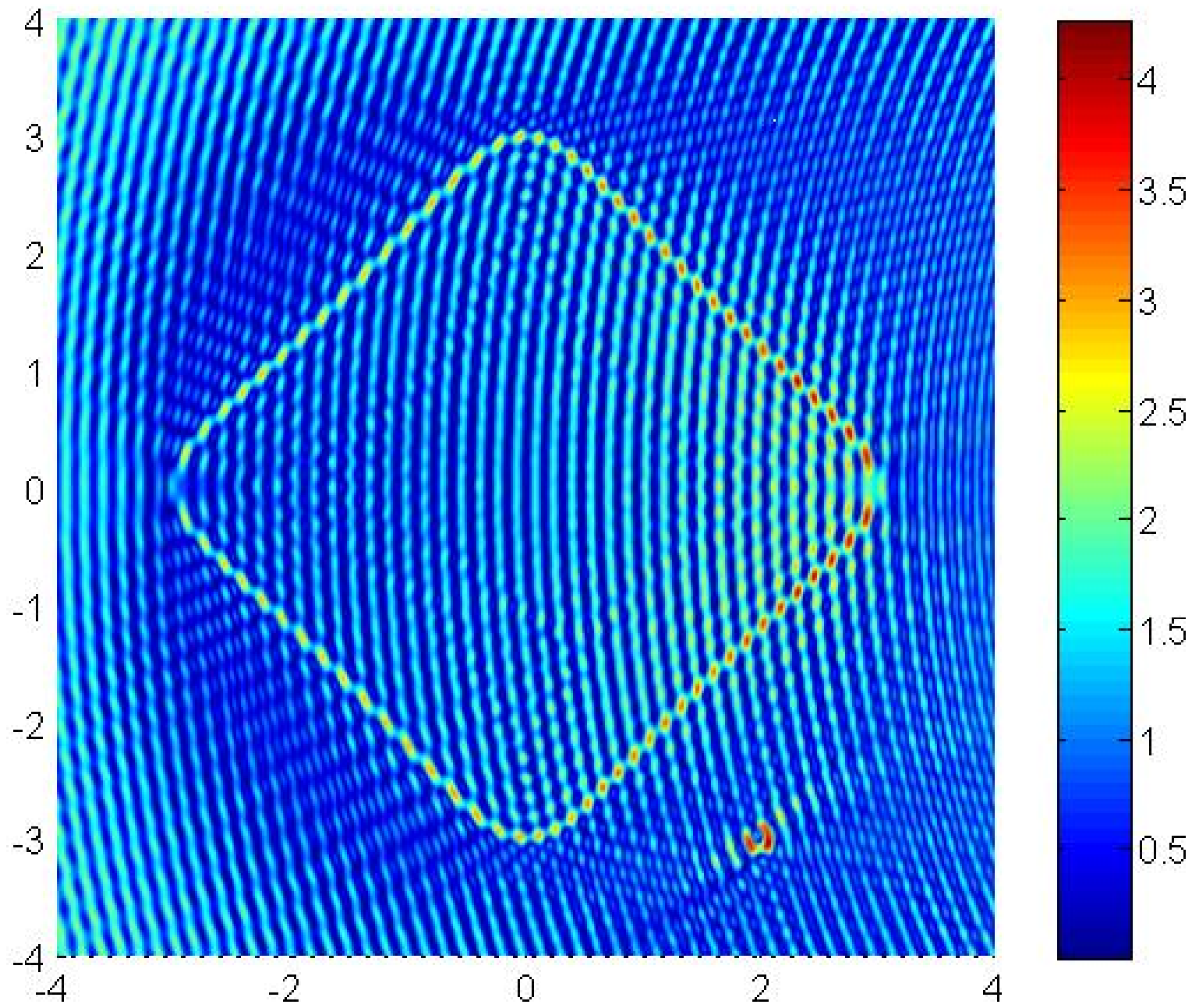}}
  \subfigure[\textbf{5\% noise, k=20}]{
    \includegraphics[width=1.5in]{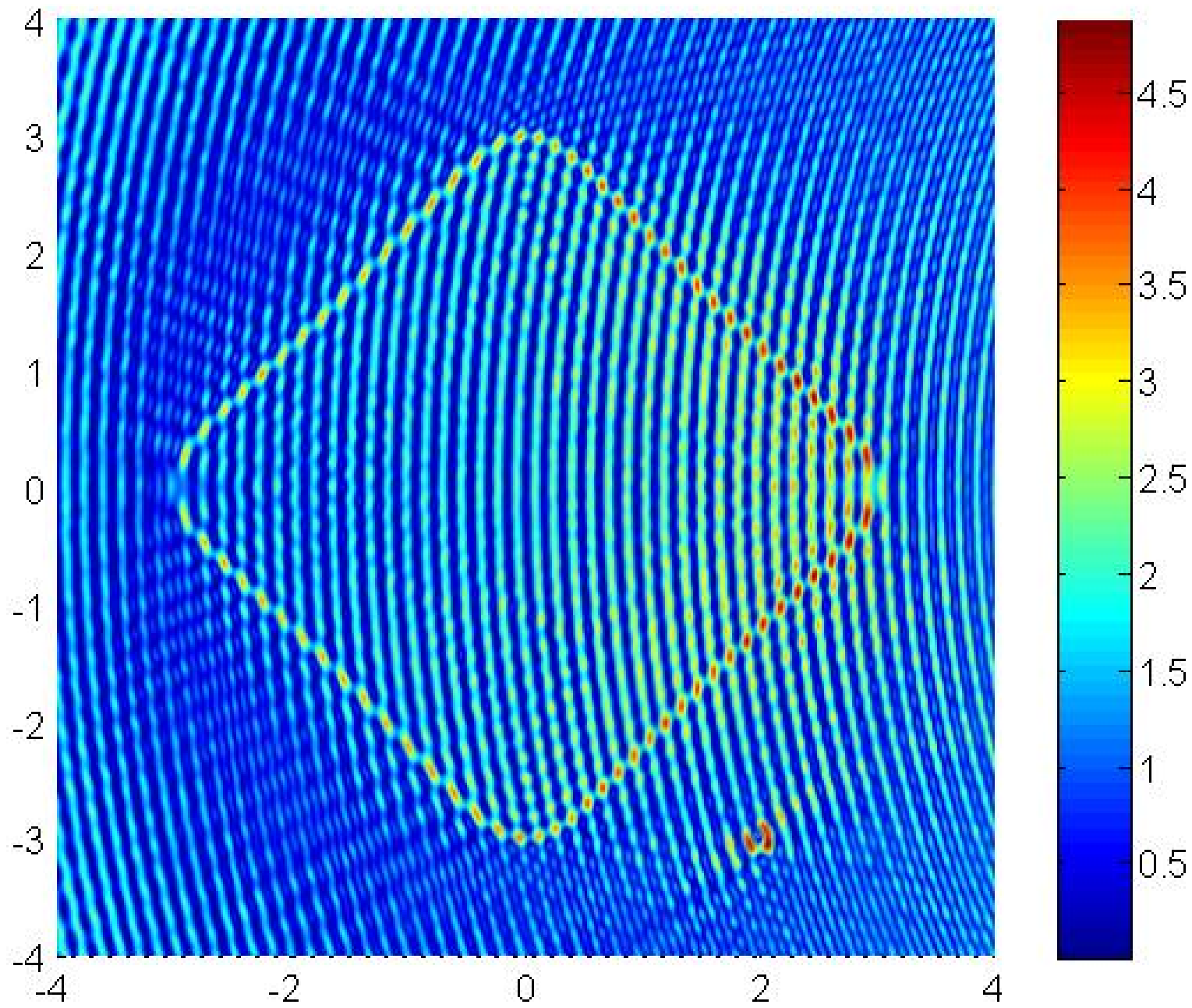}}
  \subfigure[\textbf{10\% noise, k=20}]{
    \includegraphics[width=1.5in]{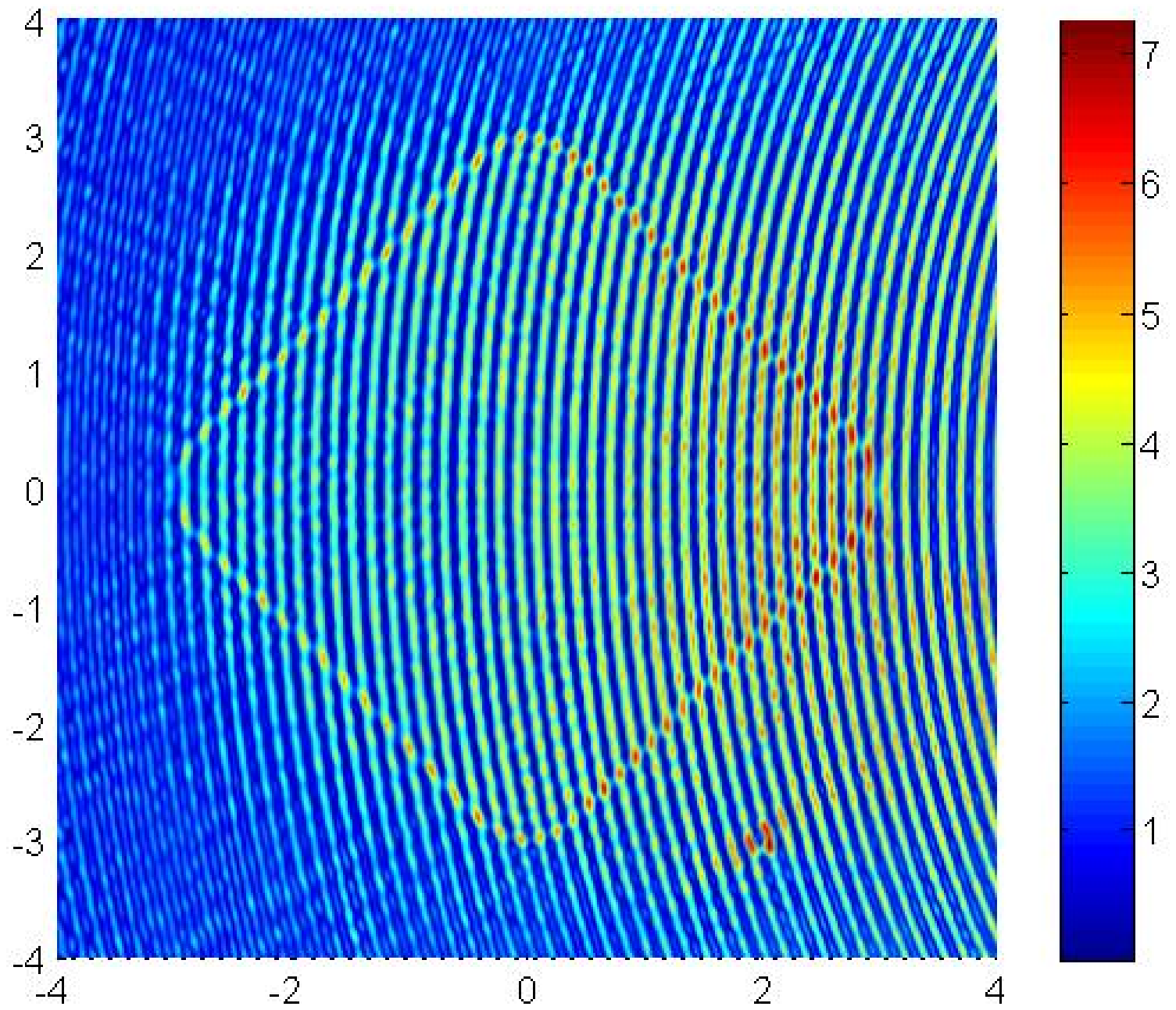}}
  \subfigure[\textbf{No noise, k=20}]{
    \includegraphics[width=1.5in]{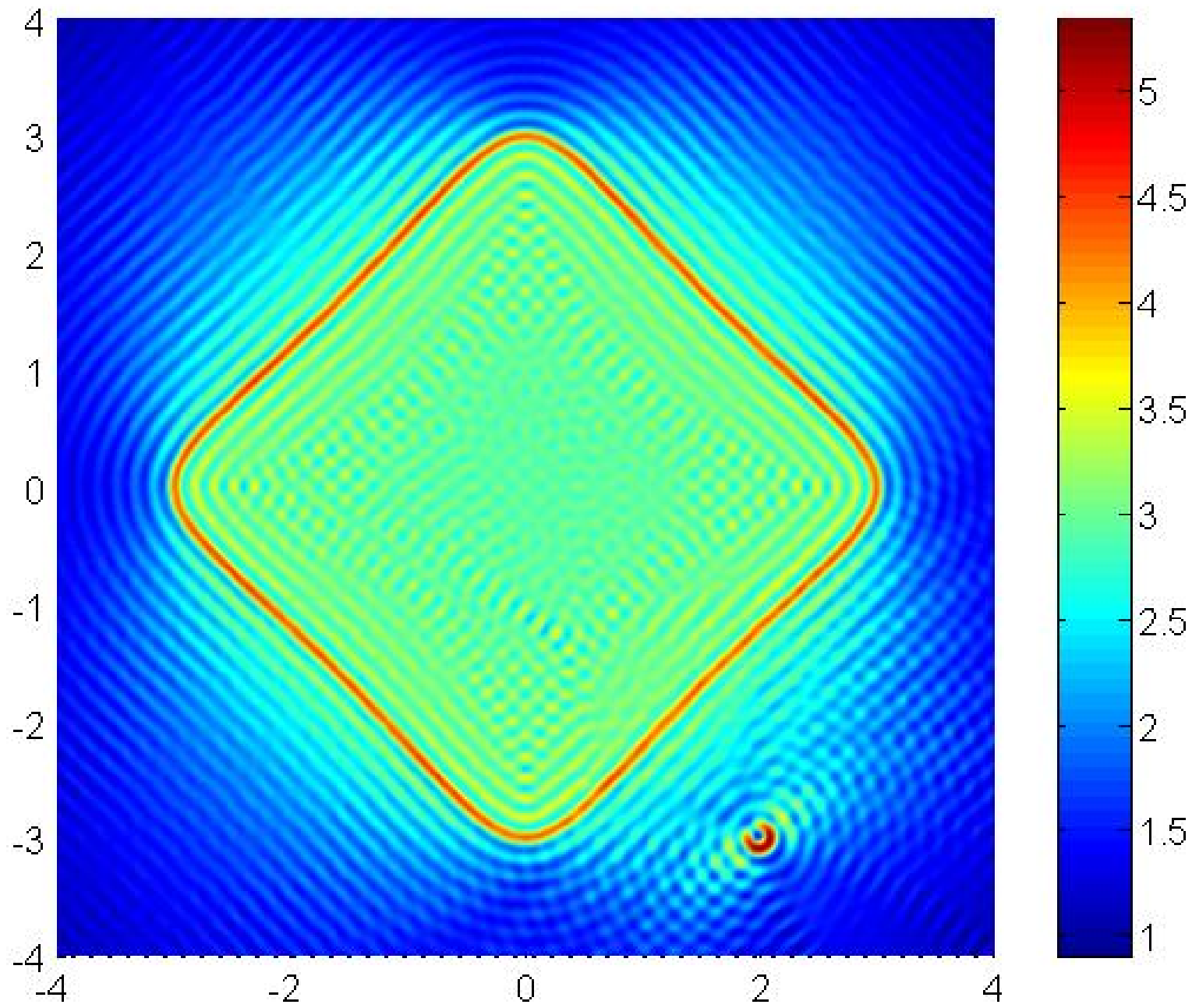}}
  \subfigure[\textbf{5\% noise, k=20}]{
    \includegraphics[width=1.5in]{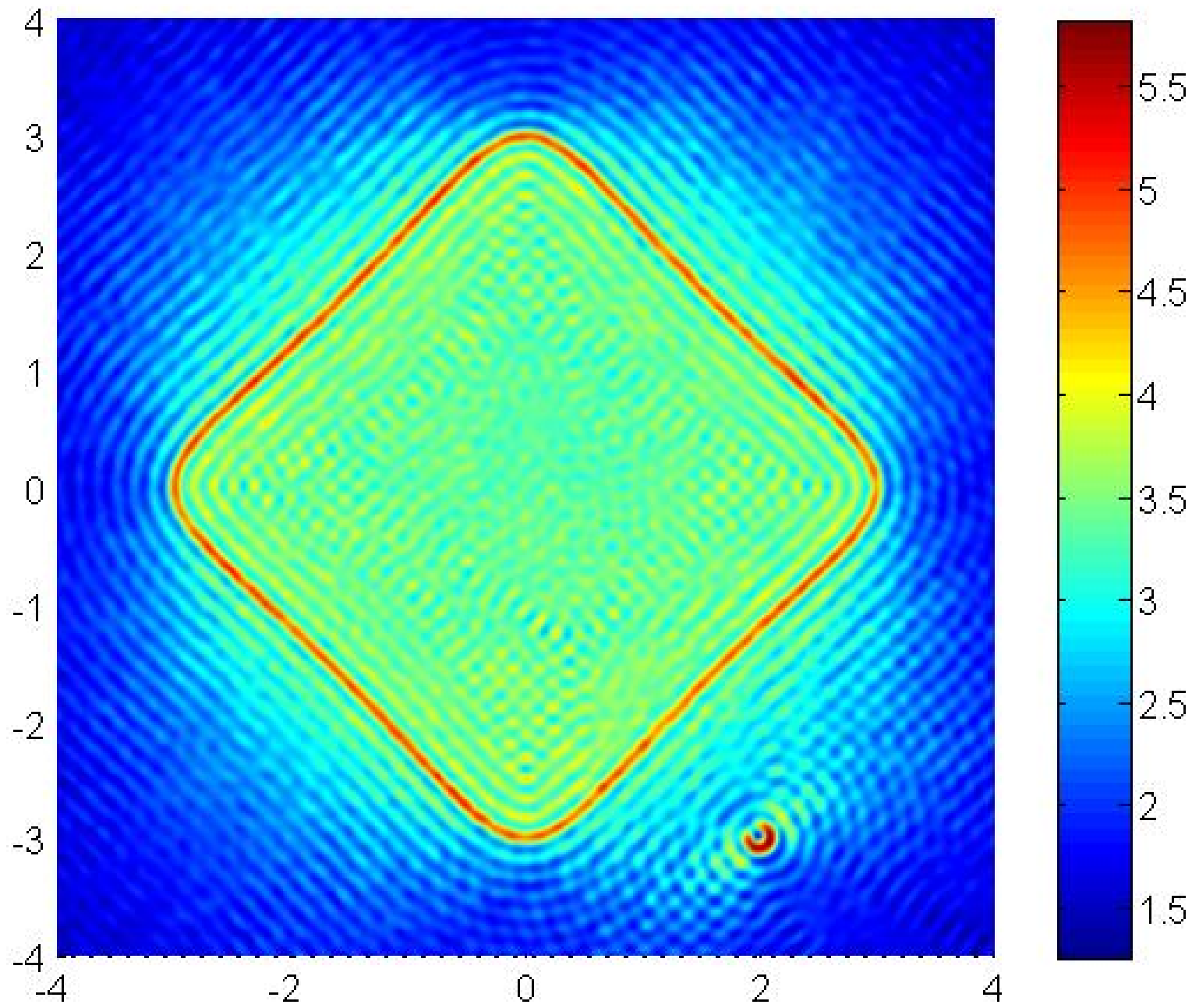}}
  \subfigure[\textbf{10\% noise, k=20}]{
    \includegraphics[width=1.5in]{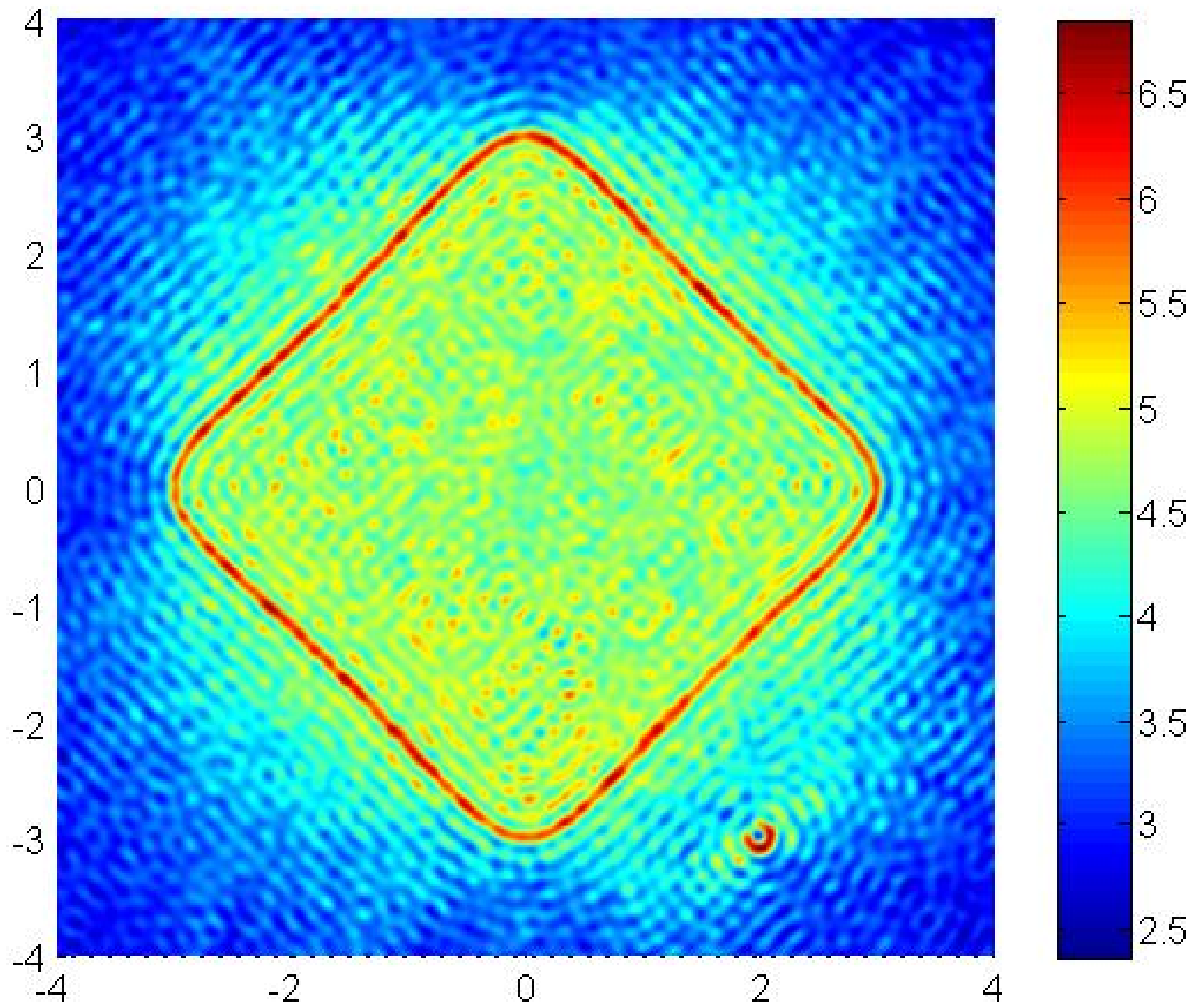}}
\caption{Imaging results of a circle-shaped, sound-soft obstacle with the radius $r=0.1$
and a rounded square-shaped, impedance obstacle with the impedance function $\rho(x)=5$,
obtained by Algorithm \ref{al1} with phaseless data (top row) and by the imaging algorithm
with $I^A_F(z)$ in \cite{P10} with full data (bottom row), respectively.
}\label{fig14}
\end{figure}

From the above examples and the other cases carried out but not presented here,
it can be seen that the proposed imaging method provides good and stable reconstructions of
impenetrable and penetrable obstacles. Further, the reconstruction results are robust to noise in data.

\section{Conclusion}\label{sec4}

In this paper, we proposed a direct imaging method to reconstruct both the location and shape
of a scattering obstacle from phaseless far-field data at a fixed frequency.
Our imaging method is motivated by our previous work \cite{ZZ17a}, where it was proved that
the translation invariance property of the phaseless far-field data can be broken by using infinitely
many sets of superpositions of two plane waves as the incident fields at a fixed frequency.
This suggests that both the location and shape of a scattering obstacle can be recovered
from such phaseless far-field data.
Recently it was proved in \cite{XZZ18} that a scattering obstacle can be uniquely determined by the
phaseless far-field patterns generated by infinitely many sets of superpositions of two plane waves
with different directions at a fixed frequency if the property of the obstacle is a priori known.
This paper gives a numerical realization of the above ideas and theoretical results.
Our imaging method only needs the calculation of the products of the measurement data with
two exponential functions at each sampling point and is thus fast and easy to implement.
Moreover, the proposed imaging method is very robust to noise in the measurement data and independent
of the physical properties of the obstacle.

\section*{Acknowledgements}

This work is partly supported by the NNSF of China grants 91630309, 11501558 and 11571355.

\end{document}